\newcommand{\Un} {\mathbf{U}(n)}
\newcommand{\Um} {\mathbf{U}(m)}
\renewcommand{\d}{\textup{d}}
\newcommand{\dt}{\textup{d}t}
\newcommand{\tF}{\textup{F}}
\newcommand{\kp}{\kappa}
\newcommand{\dg}{\dagger}
\newcommand{\bbr}{\mathbb R}
\newcommand{\bbc}{\mathbb C}
\newcommand{\bbs}{\mathbb S}
\newcommand{\bbz}{\mathbb Z}
\def\mi {{\mathrm i}}
\newcommand{\p}{\partial}
\newcommand{\ba}{\begin{aligned}}
\newcommand{\ea}{\end{aligned}}
\newcommand{\be}{\begin{equation}}
\newcommand{\ee}{\end{equation}}
\newcommand{\bea}{\begin{eqnarray}}
\newcommand{\eea}{\end{eqnarray}}
\newcommand{\lela}{\langle}
\newcommand{\rira}{\rangle}
\newcommand{\fl}{\frac}
\begin{document}

%For final pagination than call for the page counter. 
%Example here only:-
\setcounter{page}{1}  %Continued chapter start with odd page

\chapter{COLLECTIVE DYNAMICS OF LOHE TYPE AGGREGATION MODELS}

%Collective dynamics of Lohe type aggregation models

\markboth{Seung-Yeal Ha and Dohyun Kim}{Collective dynamics of Lohe type aggregation models} 

%\author{Qi Wang}  
%
%\address{Department of Mathematics\\
%Florida State University\\
%Tallahassee, FL 32306-4510, USA\\
%wang@mail.math.fsu.edu } 

\author{Seung-Yeal Ha}  

\address{Department of Mathematical Sciences and Research Institute of Mathematics\\
Seoul National University, Seoul 08826, and \\
Korea Institute for Advanced Study, \\
Hoegiro 85, Seoul 02455, Republic of Korea\\
syha@snu.ac.kr} 

\author{Dohyun Kim}  

\address{School of Mathematics, Statistics and Data Science\\
Sungshin Women's University\\
Seoul 02844, Republic of Korea \\
dohyunkim@sungshin.ac.kr} 
%
%
%\author[S-Y. Ha]{Seung-Yeal Ha}
%\address[S.-Y. Ha]{\newline Department of Mathematical Sciences and Research Institute of Mathematics \newline Seoul National University, Seoul 08826, and \newline
%Korea Institute for Advanced Study, Hoegiro 85, Seoul, 02455, Republic of 
%Korea} \email{syha@snu.ac.kr}

%\author[D. Kim]{Dohyun Kim}
%\address[Dohyun Kim]{\newline School of Mathematics, Statistics and Data Science, \newline Sungshin Women's University, Seoul 02844, Republic of Korea}
%\email{dohyunkim@sungshin.ac.kr}
%
%

\begin{abstract}
In this paper, we review state-of-the-art results on the collective behaviors for Lohe type first-order aggregation models. Collective behaviors of classical and quantum  many-body systems have received lots of attention from diverse scientific disciplines such as applied mathematics, control theory in engineering, nonlinear dynamics of statistical physics, etc. To model such collective dynamics, several phenomenological models were proposed in literature and their emergent dynamics were extensively studied in recent years. Among them, we present two Lohe type models: {\it the Lohe tensor(LT) model} and {\it the Schr\"odinger-Lohe(SL) model}, and present several sufficient conditions in unified frameworks via the Lyapunov functional approach for state diameters and dynamical systems theory approach for two-point correlation functions. We also present several numerical simulation results for the SL model. 
\end{abstract}

\section{Introduction}
\setcounter{equation}{0}
The purpose of this paper is to continue a recent review \cite{H-K-P-Z}  on the collective behaviors of classical and quantum synchronization (or aggregation) models.  From the beginning of this century, collective behaviors of many-body systems have received a lot of  attention from various scientific disciplines, e.g., synchronization and swarming behaviors in biology \cite{B-B,Pe,T-B-L,T-B,Wi1,Wi2}, decentralized control in engineering \cite{L-P-L-S,P-L-S,Re}, non-convex stochastic optimization algorithms in machine learning community \cite{C-C-T-T,C-J-L-Z,F-H-P-S1,F-H-P-S2,K-T,K-E,P-T-T-M}, etc. Mathematical modeling of such collective behaviors was begun by several pioneers, Winfree \cite{Wi2}, Kuramoto \cite{Ku2} and Vicsek \cite{Vi} whose works have established milestones of collective dynamics as major research subjects in applied mathematics, control theory, statistical physics, etc. Since then, several phenomenological models have been proposed, to name a few, the Cucker-Smale model \cite{C-S}, the swarm sphere model \cite{C-C-H,C-H1,C-H2,C-H4,H-K-L-N,O1}, matrix aggregation models \cite{B-C-S,D-F-M-T,D,G-H,H-K2,Lo2,Lo3}, etc. We also refer the reader to survey articles \cite{A-B,A-B-F,D-B1,H-K-P-Z,M-T,P-R-K,St,VZ,Wi2} for a brief introduction to collective dynamics. 

In what follows, we are mainly interested in the Lohe type aggregation models (the LT model and the SL model).  The LT model is a finite-dimensional aggregation model on the space of tensors with the same rank and size, and it encompasses previously introduced first-order aggregation models, whereas the SL model is an infinite-dimensional toy model describing synchronous behaviors in a quantum regime. In fact, Lohe first focused on the similarity between classical and quantum synchronizations and proposed a merely phenomenological model to capture common properties between two emergent behaviors in different regimes. However, when we focus on the dissimilarity between classical and quantum systems, one encounters several limitations of the SL model due to its quantum nature, namely \textit{entanglement} which is a genuine quantum feature and is not observed in the classical world. We here omit detailed descriptions on the relation between quantum synchronization and entanglement which is beyond our scope. 

In this paper, we briefly review state-of-the-art results on the collective dynamics of the aforementioned two Lohe type aggregation models from a universal  platform for  collective behaviors, Lyapunov functional approach and dynamical systems theory approach. 

The rest of the paper is organized as follows. In Section \ref{sec:2}, we introduce two Lohe type aggregation models, namely the LT model and the SL model. In Section \ref{sec:3}, we review state-of-the-art results on the emergent dynamics of the LT model for homogeneous and heterogeneous ensembles by providing several sufficient frameworks for the complete state aggregation and practical aggregation. In Section \ref{sec:4}, we review parallel results for the SL model compared to the LT model in Section \ref{sec:3}. Finally, Section \ref{sec:5} is devoted to a brief summary and discussion on some remaining interesting issues for a future direction. 
  
 \vspace{1cm}
 
\noindent {\bf Notation}: Throughout the paper, we will see several models. As long as there are no confusion, we use handy acronyms for such models:
\begin{align*}
\begin{aligned}
& \mbox{LT:~ Lohe tensor}, \quad   \mbox{SL:~Schr\"{o}dinger-Lohe}, \quad \mbox{LM:~ Lohe matrix,}  \\
&  \mbox{LHS:~ Lohe hermitian sphere}, \quad   \mbox{SDS:~swarm double sphere}, \\
&  \mbox{SDM:~swarm double matrix}. 
\end{aligned}
\end{align*}
Moreover, we use $| \cdot |$ to denote $\ell^2$-norm of vectors in $\bbr^d$ or $\bbc^d$, where  $\| \cdot \|$ represents $L^2$-norm.  

\section{Preliminaries} \label{sec:2} 
\setcounter{equation}{0}
In this section, we briefly introduce two first-order aggregation models whose emergent dynamics will be discussed in the following two sections, namely {\it ``the Lohe tensor model''} and {\it ``the Schr\"odinger-Lohe model''}, separately.

\subsection{The Lohe tensor model} \label{sec:2.1}
To set up the stage, we begin with basic terminologies on tensors. A complex rank-$m$ tensor can be visualized as a  multi-dimensional array of complex numbers with multi-indices. The {\it ``rank}'' (or  {\it ``order}'') of a tensor is the number of indices, say a rank-$m$ tensor with size $d_1 \times \cdots \times d_m$ is an element of ${\mathbb C}^{d_1 \times \cdots \times d_m}$. For example, scalars, vectors and matrices correspond to rank-0, 1 and 2 tensors, respectively.  

Let $T$ be a rank-$m$ tensor with a size $d_1 \times \cdots \times d_m$.  Then, we denote $(\alpha_1, \cdots, \alpha_m)$-th component of  $T$ by $[T]_{\alpha_1 \cdots \alpha_m}$, and we also set $\overline{T}$ by the rank-$m$ tensor whose components are simply the complex conjugate of the corresponding elements in $T$:
\[ [\overline{T}]_{\alpha_1 \cdots \alpha_m} :=\overline{[T]_{\alpha_1 \cdots \alpha_m}}, \quad 1 \leq \alpha_i \leq d_i,~~1 \leq i \leq m. \]
In other words, each component of $\overline T$ is defined as the complex conjugate of the corresponding element of $T$.  Let ${\mathcal T}_m(\bbc; d_1 \times\cdots\times d_m)$ be the collection of all rank-$m$ tensors with size $d_1 \times\cdots\times d_m$. For notational simplicity, we set
\[  {\mathbb C}^{d_1 \times \cdots \times d_m}:= {\mathcal T}_m(\bbc; d_1 \times\cdots\times d_m). \]
Then, it is a complex vector space. Several well-known first-order aggregation models, for instance, the Kuramoto model \cite{Ku2}, the swarm sphere model \cite{O1} and the Lohe matrix model \cite{Lo3} can be regarded as aggregation models on the subsets of  $\bbr, \bbc^d$ and $\bbc^{d \times d}$, respectively. Furthermore, we also introduce the following handy notation:~for $T \in \bbc^{d_1 \times \cdots \times d_m}$ and $A \in \bbc^{d_1 \times \cdots \times d_m \times d_1 \times \cdots \times d_m}$, we set
\begin{align*}
\begin{aligned}
& [T]_{\alpha_{*}}:=[T]_{\alpha_{1}\alpha_{2}\cdots\alpha_{m}}, \quad [T]_{\alpha_{*0}}:=[T]_{\alpha_{10}\alpha_{20}\cdots\alpha_{m0}},  \quad  [T]_{\alpha_{*1}}:=[T]_{\alpha_{11}\alpha_{21}\cdots\alpha_{m1}}, \\
&  [T]_{\alpha_{*i_*}}:=[T]_{\alpha_{1i_1}\alpha_{2i_2}\cdots\alpha_{mi_m}}, \quad [T]_{\alpha_{*(1-i_*)}}:=[T]_{\alpha_{1(1-i_1)}\alpha_{2(1-i_2)}\cdots\alpha_{m(1-i_m)}}, \\
&  [A]_{\alpha_*\beta_*}:=[A]_{\alpha_{1}\alpha_{2}\cdots\alpha_{m}\beta_1\beta_2\cdots\beta_{m}}.
\end{aligned}
\end{align*}
Moreover, we can associate inner product $\langle \cdot, \cdot \rangle_\tF$, namely {\it ``Frobenius inner product''} and its induced norm $\| \cdot \|_\tF$ on $ \bbc^{d_1 \times \cdots \times d_m}$:~for $T, S \in  \bbc^{d_1 \times \cdots \times d_m}$, 
\[ \langle T, S \rangle_\tF := \sum_{\alpha_* \in \prod_{i=1}^{m} \{1, \cdots, d_i\} } [{\bar T}]_{\alpha_*} [S]_{\alpha_*}, \quad \|T \|_\tF^2 := \langle T, T \rangle_\tF. \]
Let  $A_j$ be a {\it block skew-hermitian} rank-$2m$ tensor with size $(d_1 \times\cdots\times d_m) \times (d_1 \times \cdots\times d_m)$ such that
\begin{equation*}
[\bar A_j]_{\alpha_{*0} \alpha_{*1}} = -[A_j]_{\alpha_{*1} \alpha_{*0}}.
\end{equation*}
In other words, if two blocks with the first $m$-indices are interchanged with the rest $m$-indices, then its sign is changed. 

Now, we are ready to introduce the LT model on the finite ensemble $\{T_j \}_{j=1}^{N} \subset \bbc^{d_1 \times \cdots \times d_m}$: 
\begin{align} \label{LT}
\begin{aligned} 
&\dot{[T_j]}_{\alpha_{*0}} = [A_j]_{\alpha_{*0}\alpha_{*1}}[T_j]_{\alpha_{*1}} \\
&\hspace{0.5cm}+ \sum_{i_* \in \{0, 1\}^m}\kappa_{i_*} \Big([T_c]_{\alpha_{*i_*}}\bar{[T_i]}_{\alpha_{*1}}[T_i]_{\alpha_{*(1-i_*)}}-[T_i]_{\alpha_{*i_*}}\bar{[T_c]}_{\alpha_{*1}}[T_i]_{\alpha_{*(1-i_*)}} \Big), 
\end{aligned}
\end{align}
where $\kappa_{i_*}$'s are coupling strengths, $T_c := \frac{1}{N} \sum_{k=1}^{N} T_k$ is the f average of $\{T_j\}_{j1}^N$, and we used the Einstein summation convention for repeated indices in the R.H.S. of \eqref{LT}.  Although the LT model \eqref{LT} looks so complicated, interaction terms inside the parenthesis of \eqref{LT} are designed to include interactions for the Kuramoto model, the swarm sphere model and the Lohe matrix model, and they certainly have ``gain term$-$loss term''  structure and are cubic in nature. These careful designs of interactions  lead to the existence of a constant of motion. 
\begin{proposition} \label{P2.1}
\emph{\cite{H-P7}}
Let $\{ T_j \}$ be a solution to \eqref{LT}.  Then, for each $j = 1, \cdots, N$, $\|T_j \|_\tF$ is a first integral for \eqref{LT}:
\[  \|T_j(t) \|_\tF = \|T_j(0) \|_\tF, \quad t \geq 0.\]
\end{proposition}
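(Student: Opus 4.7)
The plan is to establish $\fl{d}{dt}\|T_j\|_\tF^2 = 0$ directly from \eqref{LT}, by computing $2\,\textup{Re}\langle T_j,\dot T_j\rangle_\tF$ and verifying that each contribution on the right-hand side is purely imaginary. The linear drift $\langle T_j, A_jT_j\rangle_\tF = \sum_{\alpha_{*0},\alpha_{*1}}[\bar T_j]_{\alpha_{*0}}[A_j]_{\alpha_{*0}\alpha_{*1}}[T_j]_{\alpha_{*1}}$ is handled by conjugation using the block-skew-Hermiticity relation $[\bar A_j]_{\alpha_{*0}\alpha_{*1}} = -[A_j]_{\alpha_{*1}\alpha_{*0}}$, followed by the relabeling $\alpha_{*0}\leftrightarrow\alpha_{*1}$ of the dummy multi-indices; this yields $\overline{\langle T_j,A_jT_j\rangle_\tF} = -\langle T_j,A_jT_j\rangle_\tF$ and so kills its real part.

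For each $i_*\in\{0,1\}^m$, I would denote by $A_{i_*}$ and $B_{i_*}$ the sums
$$A_{i_*}:=\sum [\bar T_j]_{\alpha_{*0}}[T_c]_{\alpha_{*i_*}}[\bar T_j]_{\alpha_{*1}}[T_j]_{\alpha_{*(1-i_*)}},\quad B_{i_*}:=\sum [\bar T_j]_{\alpha_{*0}}[T_j]_{\alpha_{*i_*}}[\bar T_c]_{\alpha_{*1}}[T_j]_{\alpha_{*(1-i_*)}},$$
obtained by multiplying the two parts of the cubic bracket in \eqref{LT} by $[\bar T_j]_{\alpha_{*0}}$ and summing over all $\alpha_{k0},\alpha_{k1}$ with $k=1,\ldots,m$. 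The key structural identity is $\overline{A_{i_*}} = B_{i_*}$. To verify it I would introduce the partial multi-index swap $S_{i_*}$ that interchanges $\alpha_{k0}\leftrightarrow\alpha_{k1}$ precisely for those $k$ with $i_k=0$. A direct check shows that $S_{i_*}$ realizes the global substitutions $\alpha_{*0}\leftrightarrow\alpha_{*(1-i_*)}$ and $\alpha_{*1}\leftrightarrow\alpha_{*i_*}$ in any expression, so relabeling the dummy variables in $\overline{A_{i_*}}$ by $S_{i_*}$ turns it into $B_{i_*}$ factor-by-factor. Hence $A_{i_*}-B_{i_*}=2i\,\textup{Im}(A_{i_*})$, and since each $\kappa_{i_*}$ is real, the real part of $\kappa_{i_*}(A_{i_*}-B_{i_*})$ vanishes; summing over $i_*$ wipes out the entire coupling contribution.

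Combining the two pieces gives $\fl{d}{dt}\|T_j\|_\tF^2=0$, and time integration yields the claim. The main obstacle is spotting the correct index permutation $S_{i_*}$: a naive global swap $\alpha_{*0}\leftrightarrow\alpha_{*1}$ does not line up gain and loss terms, because $\alpha_{*i_*}$ and $\alpha_{*(1-i_*)}$ interleave the two levels of indices differently for different $i_*$. Once the partial swap is in hand, the rest reduces to bookkeeping, and this is precisely what the careful ``gain term $-$ loss term'' architecture of \eqref{LT} is engineered to accommodate.
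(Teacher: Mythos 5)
Your proof is correct and is essentially the standard argument: the paper itself states Proposition \ref{P2.1} without proof (citing \cite{H-P7}), and the computation there proceeds exactly as you do, by showing $\textup{Re}\langle T_j,\dot T_j\rangle_\tF=0$ via the block skew-Hermiticity of $A_j$ and the conjugate pairing of the gain and loss terms. Your identification of the partial index swap $S_{i_*}$ (swapping $\alpha_{k0}\leftrightarrow\alpha_{k1}$ only for $k$ with $i_k=0$, which sends $\alpha_{*0}\mapsto\alpha_{*(1-i_*)}$ and $\alpha_{*1}\mapsto\alpha_{*i_*}$) is precisely the right bookkeeping device and checks out.
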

\subsection{The Schr\"odinger-Lohe model} \label{sec:2.2}
Consider a network consisting of $N$ nodes denoted by $1, \cdots, N,$ and we assume that the network topology is registered by an adjacent weighted matrix $(a_{ij})$. 

For each $j = 1, \cdots, N$, let $\psi_j=\psi_j(x,t)$ be the wave-function of the  quantum subsystem lying on the $j$-th node. Then, we assume that  the spatial-temporal dynamics of $\psi_j$ is governed by the Cauchy problem to the SL model:  for $(x,t)\in \bbr^d\times \bbr_+$, 
\begin{equation} \label{S-L}
\begin{cases}
\displaystyle \mi \p_t \psi_j = - \frac{1}{2} \Delta \psi_j + V_j\psi_j + \frac{\mi\kp}{2N}\sum_{k=1}^N a_{jk} \left( \psi_k - \frac{ \langle \psi_j,\psi_k\rangle}{\langle \psi_j,\psi_j\rangle} \psi_j \right), \\
\displaystyle \psi_j(x,0) = \psi_j^0(x), \quad \|\psi_j^0 \| = 1,  \quad j=1,\cdots,N,
\end{cases}
\end{equation}
where $\kappa$ is a coupling strength, and we have taken mass and  normalized Planck's constant to be unity for the simplicity of presentation.  Like the classical Schr\"odinger equation, system \eqref{S-L} satisfies $L^2$-conservation.
\begin{proposition}
Let $\{ \psi_j \}$ be a solution to \eqref{S-L}. Then, $\|\psi_j(t)\|$ is a first integral:
\[ \|\psi_j(t)\| = \| \psi_j^0 \|, \quad t \geq 0, \quad 1 \leq j \leq N. \]
\end{proposition}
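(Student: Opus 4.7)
The plan is to differentiate $\|\psi_j(t)\|^2$ in time and verify that each of the three contributions to $\partial_t \psi_j$ --- kinetic, potential, and Lohe coupling --- produces zero when paired with $\psi_j$ and realified. First I would rewrite \eqref{S-L} by dividing through by $\mi$ to isolate the time derivative,
\begin{equation*}
\partial_t \psi_j = \frac{\mi}{2}\Delta \psi_j - \mi V_j \psi_j + \frac{\kappa}{2N}\sum_{k=1}^N a_{jk}\left(\psi_k - \frac{\langle \psi_j,\psi_k\rangle}{\langle \psi_j,\psi_j\rangle}\psi_j\right),
\end{equation*}
and then exploit the standard identity $\frac{d}{dt}\|\psi_j(t)\|^2 = 2\,\mathrm{Re}\,\langle \psi_j, \partial_t \psi_j\rangle$, handling the three terms on the right in sequence.

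For the kinetic piece, integration by parts (valid on the solution class in which \eqref{S-L} is well-posed, so I tacitly assume sufficient spatial decay) gives $\langle \psi_j, \Delta \psi_j\rangle = -\|\nabla \psi_j\|^2 \in \bbr$; multiplication by the pure imaginary scalar $\mi/2$ produces a purely imaginary quantity whose real part vanishes. For the potential piece, $V_j$ is real-valued, so $\langle \psi_j, V_j\psi_j\rangle = \int V_j |\psi_j|^2\,\d x \in \bbr$, and the factor $-\mi$ again kills the real part. These two cancellations are the usual Schr\"odinger-type conservation mechanism.

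The crux --- and the reason the Lohe coupling is designed in exactly this form --- is the coupling piece. A direct expansion gives, for each fixed $k$,
\begin{equation*}
\left\langle \psi_j,\; \psi_k - \frac{\langle \psi_j,\psi_k\rangle}{\langle \psi_j,\psi_j\rangle}\psi_j\right\rangle = \langle \psi_j,\psi_k\rangle - \frac{\langle \psi_j,\psi_k\rangle}{\langle \psi_j,\psi_j\rangle}\,\langle \psi_j,\psi_j\rangle = 0.
\end{equation*}
Geometrically, the bracketed vector is precisely the component of $\psi_k$ orthogonal to $\psi_j$ in $L^2$, so its inner product against $\psi_j$ vanishes node-by-node, and summing over $k$ erases the entire coupling contribution to $\frac{d}{dt}\|\psi_j\|^2$.

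Putting the three vanishings together yields $\frac{d}{dt}\|\psi_j(t)\|^2 \equiv 0$, and the initial normalization $\|\psi_j^0\|=1$ upgrades this to $\|\psi_j(t)\|=\|\psi_j^0\|$ for every $t\ge 0$ and every $j$. The only mild technical issue --- not really an obstacle --- is the functional-analytic justification of differentiating under the spatial integral and of the boundary terms in the integration by parts; this is absorbed into the standing well-posedness hypothesis for the SL Cauchy problem and will not be re-derived here.
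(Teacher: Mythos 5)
Your proof is correct and is exactly the standard argument: the kinetic and potential contributions to $\tfrac{d}{dt}\|\psi_j\|^2$ are real parts of purely imaginary quantities, and the Lohe coupling term vanishes identically because $\psi_k - \frac{\langle \psi_j,\psi_k\rangle}{\langle \psi_j,\psi_j\rangle}\psi_j$ is the $L^2$-orthogonal projection of $\psi_k$ away from $\psi_j$. The paper states this proposition without any proof (it is cited as the analogue of classical Schr\"odinger $L^2$-conservation), so there is nothing to compare against; your derivation, including the honest caveat about justifying the integration by parts within the weak-solution class of Theorem \ref{T2.1}, is precisely the argument the authors leave implicit.
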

Then, $L^2$-conservation and standard energy estimates yield a global existence of unique weak solutions to \eqref{S-L}.
\begin{theorem}  \label{T2.1}
\emph{\cite{A-M,H-H}}
Suppose initial data satisfy $\psi_j^0 \in L^2(\bbr^d)$ for each $j= 1, \cdots, N$. Then, the Cauchy problem   \eqref{S-L} admits a unique global-in-time weak solution satisfying  
\[
\psi_j \in C([0,\infty); L^2(\bbr^d)), \quad j = 1, \cdots, N.
\]
Moreover, if we assume $\psi_j^0 \in H^1(\bbr^d)$, then the corresponding global weak solution satisfies $\psi_j \in C([0,\infty);H^1(\bbr^d))$. 
\end{theorem}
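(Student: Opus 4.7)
The plan is to combine standard Schr\"odinger semigroup theory with Banach's fixed point theorem to produce a local mild solution, and then use the $L^2$-conservation of the preceding proposition to upgrade it to a global one. The only structural subtlety is the reciprocal $1/\langle\psi_j,\psi_j\rangle$ in the coupling term, which must stay bounded away from a singularity.

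For local existence, I would rewrite \eqref{S-L} in the Duhamel form
$$
\psi_j(t) = U_j(t)\psi_j^0 + \int_0^t U_j(t-s)\,\mathcal{N}_j[\Psi(s)]\,\d s,
$$
where $U_j(t) = e^{-\mi t H_j}$ is the unitary group on $L^2(\bbr^d)$ generated by $H_j = -\tfrac12\Delta + V_j$ (implicitly assuming each $V_j$ is real-valued and, e.g., Kato class so that $H_j$ is self-adjoint), $\Psi=(\psi_1,\dots,\psi_N)$, and
$$
\mathcal{N}_j[\Psi] := \frac{\kp}{2N}\sum_{k=1}^N a_{jk}\Big(\psi_k - \frac{\langle\psi_j,\psi_k\rangle}{\langle\psi_j,\psi_j\rangle}\psi_j\Big).
$$
Since $\|\psi_j^0\|=1$, on the closed ball $B_{T,R} := \{\Psi \in C([0,T];L^2(\bbr^d)^N) : \sup_{t,j}\|\psi_j(t) - U_j(t)\psi_j^0\|\leq R\}$ with $R\in(0,1)$ fixed, each $\langle\psi_j,\psi_j\rangle$ stays uniformly bounded away from zero; hence $\mathcal{N}_j$ is Lipschitz on $B_{T,R}$ with a bound independent of $t$. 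Because $U_j(t)$ is an $L^2$-isometry, the Duhamel map sending $\Psi$ to the right-hand side above is a contraction on $B_{T,R}$ for $T>0$ small enough, producing a unique local solution which is also a weak solution of \eqref{S-L}.

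To globalize I would invoke the $L^2$-conservation from the preceding proposition: along the solution, $\|\psi_j(t)\| = \|\psi_j^0\| = 1$ for every $j$, so neither the $L^2$ norm can blow up nor the denominator $\langle\psi_j,\psi_j\rangle$ vanish. The standard continuation criterion then forces the maximal existence time to be $+\infty$ and yields $\psi_j \in C([0,\infty);L^2(\bbr^d))$. For the $H^1$ claim one repeats the fixed-point argument in $C([0,T];H^1(\bbr^d)^N)$: $U_j(t)$ is also a group on $H^1$, and since each scalar $|\langle\psi_j,\psi_k\rangle|/\langle\psi_j,\psi_j\rangle$ is controlled by the (conserved) $L^2$ norms one gets
$$
\|\mathcal{N}_j[\Psi]\|_{H^1} \lesssim \sum_{k}\|\psi_k\|_{H^1}
$$
with constant depending only on the initial $L^2$ norms. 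A Gr\"onwall inequality in $H^1$ then keeps $\sum_j\|\psi_j(t)\|_{H^1}$ locally bounded, and the same continuation argument gives $\psi_j \in C([0,\infty);H^1(\bbr^d))$. The main technical point throughout---keeping $1/\langle\psi_j,\psi_j\rangle$ away from a singularity during a priori estimates---is resolved cleanly and once and for all by the $L^2$-conservation identity.
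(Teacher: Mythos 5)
Your argument is correct and follows exactly the route the paper indicates (the survey gives no proof of this theorem, saying only that ``$L^2$-conservation and standard energy estimates'' yield it and deferring to \cite{A-M,H-H}): Duhamel formulation plus contraction mapping for local existence, with the conserved $L^2$ norm both keeping the denominator $\langle\psi_j,\psi_j\rangle$ away from zero and supplying the continuation criterion, followed by a Gr\"onwall estimate for $H^1$ persistence. The only caveat---one the paper's own statement shares by leaving hypotheses on $V_j$ implicit---is that the $H^1$ step tacitly requires $V_j$ to be regular enough that $e^{-\mi t H_j}$ is bounded on $H^1$; for unbounded potentials such as the harmonic traps used later in the survey one would have to work in the natural energy space instead.
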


\vspace{0.5cm}

Before we close this section, we show that the SL model \eqref{S-L} reduces to the Kuramoto model  as a special case. For this, we assume
\[ V_{j}(x) =  :\nu_j: \mbox{constant}, \qquad \psi_j(x,t) =: e^{-{\mathrm i} \theta_j(t)}, \quad (x, t) \in \bbr^d \times \bbr_+. \]
We substitute the above ansatz into the SL model \eqref{S-L} to get
\[ {\dot \theta}_j \psi_j = \nu_j \psi_j + \frac{{\mathrm i} \kappa}{N} \sum_{k=1}^{N} a_{jk} \Big(\psi_k - e^{-{\mathrm i}(\theta_j - \theta_k)} \psi_j  \Big). \]
Then, we multiply  $\overline{\psi}_j$ to the above relation, use $|\psi_j(t)|^2 =1$, and compare the real part of the resulting relation to obtain the Kuramoto model \cite{Ku2}:
\[  {\dot \theta}_j = \nu_j + \frac{{\bar \kappa}}{N} \sum_{k=1}^{N} a_{jk} \sin (\theta_k - \theta_j), \quad {\bar \kappa} := 2 \kappa.   \]

\vspace{0.5cm}

In the following two sections, we review emergent dynamics of the LT model and the SL model, separately. Most presented results in the following section will be provided without detailed proofs, but we may discuss brief ideas or key ingredients to give a feeling to see how proofs go.  

\section{The Lohe tensor model} \label{sec:3} 
\setcounter{equation}{0}
In this section, we review the emergent dynamics of the LT model and two explicit low-rank LT models which can be related to the swarm sphere model and the Lohe matrix model. 

\subsection{Emergent dynamics} \label{sec:3.1}
 In this subsection, we review emergent dynamics of the Lohe tensor model \eqref{LT}. First, we recall two concepts of aggregations (or synchronizations) as follows.
\begin{definition} \label{D3.1}
\emph{\cite{H-P1,H-P5}}
Let $\{T_j \}$ be a finite ensemble of rank-m tensors whose dynamics is governed by \eqref{LT}. 
\begin{enumerate}
\item The ensemble exhibits   complete state aggregation (synchronization) if relative states tend to zero asymptotically:
\[ \lim_{t\rightarrow \infty}\max_{1\leq i,j\leq N } \| T_i(t)-T_j(t) \|_\tF = 0. \]
\item The ensemble exhibits practical aggregation (synchronization) if magnitudes of relative states can be controlled by the principal coupling strength $\kappa_{0\cdots0}$ as follows:
\[  \lim_{\kappa_{i_*} \to \infty}  \limsup_{t\rightarrow\infty} \max_{1\leq i,j\leq N } \| T_i(t)-T_j(t) \|_\tF =0,  \]
for some $i_* \in \{0, 1 \}^m$.
\end{enumerate}
\end{definition}
\begin{remark} The jargon ``{\it synchronization''} is  often used in control theory and physics communities instead of aggregation. In fact, synchronization represents an adjustment of rhythms in oscillatory systems. In contrast, our systems under consideration might not be oscillatory. Thus, the authors feel more comfortable to use aggregation instead of synchronization. 
\end{remark}

For a given state ensemble $\{T_j \}$ and free flow ensemble $\{A_j \}$, we define diameters for both ensembles:
\[
\mathcal D(T) := \max_{1\leq i,j\leq N} \|T_i  - T_j \|_\tF, \quad \mathcal D(A) := \max_{1\leq i,j\leq N} \|A_i  - A_j \|_\tF.
\]
Note that $\mathcal D(T)$ is time-varying and Lipschitz continuous. Thus, it is differentiable a.e. and $\mathcal D(A)$ is constant, since $A_j$ is a constant tensor. \newline

Let $\{T_j \}$ be a solution to \eqref{LT} with  $ \|T_j \|_\tF = 1.$ Then, after tedious and delicate analysis, one can derive a differential inequality for  $\mathcal D(T)$ (see Proposition 4.2 in \cite{H-P7}): for a.e. $t>0$, 
\begin{equation} \label{C-1}   
\left|{\d\over{\dt}} {\mathcal D}(T)+\kappa_0 {\mathcal D}(T) \right| 
\leq 2\kappa_0 {\mathcal D}(T)^2+ 2 \hat{\kappa}_0 \|T_c^{0}\|_\tF {\mathcal D}(T)+ {\mathcal D}(A),
\end{equation}
where $T_c^{0}, \kappa_0$ and ${\hat \kappa}_0$ are defined as follows:
\[ T_c^{0} := \frac{1}{N} \sum_{j=1}^{N} T_j^{0}, \qquad \kappa_0 := \kappa_{0\cdots 0}, \qquad  \hat{\kappa}_0 := \sum_{i_*\neq (0,\cdots, 0)}\kappa_{i_*} . \]
Depending on whether the ensemble $\{T_j \}$ has the same free flows or heterogeneous free flows, we have the following two cases:
\[ {\mathcal D}(A) = 0 : \mbox{homogeneous ensemble}, \quad {\mathcal D}(A) > 0 :  \mbox{heterogeneous ensemble}. \]
Then, the emergent dynamics of \eqref{LT} can be summarized as follows. 
\begin{theorem} \label{T3.1} \cite{H-P7}
The following assertions hold.
\begin{enumerate}
\item
(Emergence of complete state aggregation):~Suppose system parameters and   initial data $\{T_j^0 \}$ satisfy
\begin{align}
\begin{aligned} \label{C-2}
& {\mathcal D}(A) = 0, \quad  \|T_j^{0}\|_\tF = 1, \quad  j = 1, \cdots, N, \quad \kappa_0 > 0, \\
&  {\hat \kappa}_{0}  < \frac{\kappa_{0}}{2 \|T_c^{0}\|_\tF},\quad 0< {\mathcal D}(T^{0})<\frac{\kappa_{0}- 2{\hat \kappa}_0 \|T_c^{0}\|_\tF}{2\kappa_0},
\end{aligned}
\end{align}
and let $\{T_j \}$ be a global solution to \eqref{LT}. Then, there exist positive constants $C_0$ and $C_1$ depending on $\kappa_{i_*}$ and $\{ T_j ^{0} \}$ such that 
\[
C_0 e^{-\left(\kappa_{0}+ 2 \hat{\kappa}_0  \|T_c^{0}\|_\tF \right)t} \leq {{\mathcal D}(T(t))} \leq C_1 e^{-\left(\kappa_{0}- 2\hat{\kappa}_0 \|T_c^{0}\|_\tF  \right)t}, \quad t \geq 0.
\]
\item
(Emergence of practical aggregation):~Suppose system parameters and initial data satisfy
\begin{align}
\begin{aligned} \label{C-2-1}
& \|T_j^{0}\|_\tF = 1, \quad  j = 1, \cdots, N, \quad \kappa_0 > 0, \\
& 0\leq {\mathcal D}(T^{0})\leq\eta_2,\quad 0 < {\mathcal D}(A)< \frac{|\kappa_0- 2\hat{\kappa}_0 \|T_c^{0}\|_\tF |^2}{8 \kappa_0},
\end{aligned}
\end{align}
where $\eta_2$ appearing in \eqref{C-2-1} is the largest positive root of the following quadratic equation:
\[
-2\kappa_0 x^2+(\kappa_{0}- 2\hat{\kappa}_0  \|T_c^{0}\|_\tF )x = {\mathcal D}(A).
\]
Let $\{T_j\}$ be a global solution to system \eqref{LT}. Then, practical aggregation emerges asymptotically:
\begin{equation} \label{C-2-2}
\lim_{ \kappa_0 \to \infty} \limsup_{t\rightarrow\infty} {\mathcal D}(T(t))=0.
\end{equation}
\end{enumerate}
\end{theorem}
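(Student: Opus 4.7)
The entire argument is driven by the a.e.\ differential inequality \eqref{C-1}, which I would read as the two-sided scalar comparison
\begin{equation*}
-\mu\,\mathcal D(T) - 2\kappa_0 \mathcal D(T)^2 - \mathcal D(A) \;\le\; \frac{\d}{\dt}\mathcal D(T) \;\le\; -\alpha\,\mathcal D(T) + 2\kappa_0 \mathcal D(T)^2 + \mathcal D(A),
\end{equation*}
where $\alpha := \kappa_0 - 2\hat{\kappa}_0\|T_c^{0}\|_\tF$ and $\mu := \kappa_0 + 2\hat{\kappa}_0\|T_c^{0}\|_\tF$. The hypotheses in \eqref{C-2} and \eqref{C-2-1} are tuned precisely so that the Riccati-type scalar ODEs obtained by replacing $\le$ with $=$ possess a nontrivial stable basin, with the initial datum sitting inside that basin. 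The entire proof then reduces to standard comparison for Lipschitz functions that are differentiable almost everywhere.

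\textbf{Homogeneous case.} When $\mathcal D(A)=0$, the upper estimate becomes $\mathcal D' \le \mathcal D\,(2\kappa_0 \mathcal D - \alpha)$, whose right-hand side is strictly negative on the interval $0<\mathcal D<\alpha/(2\kappa_0)$. Since \eqref{C-2} places $\mathcal D(T^{0})$ inside this interval, a continuity bootstrap shows it is forward-invariant, after which the reciprocal substitution $u:=1/\mathcal D(T)$ linearizes the Riccati inequality into $u' \ge \alpha u - 2\kappa_0$; integrating this gives
\begin{equation*}
\mathcal D(T)(t) \;\le\; \bigl(u(0)-2\kappa_0/\alpha\bigr)^{-1}\,e^{-\alpha t} \;=:\; C_1 e^{-\alpha t}.
\end{equation*}
The lower bound is obtained symmetrically: the same reciprocal trick applied to the lower half of \eqref{C-1} produces $u' \le \mu u + 2\kappa_0$ and hence $\mathcal D(T)(t) \ge C_0 e^{-\mu t}$, with $C_0>0$ because $\mathcal D(T^{0})>0$.

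\textbf{Heterogeneous case.} Now the upper half of \eqref{C-1} reads $\mathcal D' \le f(\mathcal D)$ with $f(x) := 2\kappa_0 x^2 - \alpha x + \mathcal D(A)$. The smallness condition $\mathcal D(A)<\alpha^2/(8\kappa_0)$ is exactly the discriminant condition guaranteeing that $f$ has two distinct positive roots $\eta_1<\eta_2$, with $f\le 0$ on $[\eta_1,\eta_2]$. Since $\mathcal D(T^{0})\le\eta_2$, forward invariance of $[0,\eta_2]$ combined with scalar comparison against $y'=f(y)$ forces $\limsup_{t\to\infty}\mathcal D(T)(t)\le\eta_1$. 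A short quadratic computation gives
\begin{equation*}
\eta_1 \;=\; \frac{\alpha - \sqrt{\alpha^2-8\kappa_0 \mathcal D(A)}}{4\kappa_0} \;=\; \frac{2\mathcal D(A)}{\alpha + \sqrt{\alpha^2-8\kappa_0 \mathcal D(A)}},
\end{equation*}
so that, since $\alpha$ grows linearly in $\kappa_0$ while $\mathcal D(A)$ is fixed, sending $\kappa_0\to\infty$ yields $\eta_1\to 0$, which is exactly \eqref{C-2-2}.

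\textbf{Main obstacle.} Almost all of the substantive work is packaged inside \eqref{C-1} itself, whose derivation in \cite{H-P7} is the genuinely delicate step, requiring careful tensor-calculus bookkeeping to isolate the linear decay term $-\kappa_0 \mathcal D(T)$ coming from the principal coupling $\kappa_{0\cdots 0}$ from the cubic gain/loss contributions. Granted \eqref{C-1}, the remaining subtleties are mild: one has to justify scalar comparison for the Lipschitz function $\mathcal D(T)(\cdot)$ (differentiable only a.e.) in place of a classical $C^1$ flow, and one has to close the bootstrap establishing forward invariance of the basins $\{\mathcal D\le\alpha/(2\kappa_0)\}$ in Part (1) and $\{\mathcal D\le\eta_2\}$ in Part (2). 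Both are standard but must be stated carefully to make the decay estimates rigorous.
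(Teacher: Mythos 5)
Your proposal is correct and follows essentially the same route as the paper: both arguments take the key inequality \eqref{C-1} as the starting point, extract the one-sided Riccati-type bounds \eqref{C-4}--\eqref{C-5} for the homogeneous case and the quadratic comparison \eqref{F-2}--\eqref{F-3} with roots $\eta_1<\eta_2$ for the heterogeneous case, and conclude by invariance of the basin plus $\eta_1\to 0$ as $\kappa_0\to\infty$. Your explicit reciprocal substitution and the closed form $\eta_1 = 2\mathcal D(A)/\bigl(\alpha+\sqrt{\alpha^2-8\kappa_0\mathcal D(A)}\bigr)$ merely make explicit what the paper leaves as ``directly solve the differential inequality,'' and your sign for the quadratic term in the lower bound is the one that literally follows from \eqref{C-1}, which still yields $\mathcal D(T)(t)\ge C_0e^{-\mu t}$.
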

\begin{proof}
For a detailed proof, we refer the reader  to \cite{H-P1}. Below, we instead provide a brief sketch on the key ingredient of proofs.  \newline

\noindent (i)~Suppose that ${\mathcal D}(A) = 0$. \newline

\noindent $\bullet$~Case A (Upper bound estimate):~It follows from \eqref{C-1} that 
\begin{equation} \label{C-4}
{\d\over{\dt}} {\mathcal D}(T)\leq  {\mathcal D}(T) \Big[ 2\kappa_0 {\mathcal D}(T)-(\kappa_{0}- 2\hat{\kappa}_0  \|T_c^{0}\|_\tF) \Big ], \quad \mbox{a.e.~$t > 0$}.
\end{equation}
Under the assumptions \eqref{C-2},   coefficients appearing in the R.H.S. of \eqref{C-4} satisfy
\[ 2\kappa_0 >0, \quad \kappa_{0}- 2\hat{\kappa}_0  \|T_c^{0}\|_\tF > 0. \]
Now, we directly solve the differential inequality \eqref{C-4} to derive  desired upper bound estimates. \newline

\noindent $\bullet$~Case B (Lower bound estimate): Again, it follows from \eqref{C-1} that 
\begin{equation} \label{C-5}
\frac{\d}{\dt} {\mathcal D}(T) \geq {\mathcal D}(T) \Big[ 2\kappa_0 {\mathcal D}(T) - (\kappa_0 + 2 {\hat \kappa}_0 \| T^{0}_c \|_\tF) \Big], \quad \mbox{a.e.}~t > 0. 
\end{equation}
Similar to Case A, we integrate \eqref{C-5} to find the desired lower bound estimate.  \newline

\noindent (ii)~~Suppose that ${\mathcal D}(A) > 0$. Then, it follows from \eqref{C-1} that 
\begin{equation} \label{F-2}
{\d\over{\dt}} {\mathcal D}(T)\leq 2\kappa_0 {\mathcal D}(T)^2-(\kappa_{0}- 2\hat{\kappa}_0  \|T_c^{0}\|_\tF) {\mathcal D}(T)+ {\mathcal D}(A), \quad \mbox{a.e.~$t > 0$}.
\end{equation}
In order to use a comparison principle, we introduce a quadratic function $f$ defined by  
\begin{equation} \label{F-3}
 f(x) :=-2\kappa_0 x^2+(\kappa_{0}- 2\hat{\kappa}_0  \|T_c^{0}\|_\tF )x.
\end{equation} 
Since we are interested in the regime $\kappa_0 \to \infty$, the term $\kappa_0-2\hat{\kappa}_0  \|T_c^{0}\|$ will be positive. Thus, it follows from \eqref{F-2} and \eqref{F-3} that 
\begin{equation*} \label{F-4}
\frac{\d}{\dt} {\mathcal D}(T)\leq{}{\mathcal D}(A)-f({\mathcal D}(T)), \quad \mbox{a.e.}~t > 0.
\end{equation*}
Considering the geometry of the graph of $f$, one can see that the quadratic equation $f(x)=D(A)$ has two positive roots $\eta_1$ and $\eta_2$ satisfying  
\[ 0<\eta_1< \frac{\kappa_0- 2\hat{\kappa}_0 \|T_c^{0}\|_\tF }{4\kappa_0} <\eta_2< \frac{\kappa_0- 2\hat{\kappa}_0 \|T_c^{0}\|_\tF }{2\kappa_0}. \]
Moreover, one can claim: (see Lemma 5.3 in \cite{H-P7}) 
\begin{enumerate}
\item
${\d\over{\dt}} {\mathcal D}(T(t))\leq{0}$ almost every $t>0$ when ${\mathcal D}(T(t))\in[\eta_1, \eta_2]$.
\vspace{0.1cm}
\item
$ \mathcal{S}(\eta_2):=\{ {\mathcal D}(T(t))< \eta_2 \}$ is a positively invariant set for the LT flow generated by \eqref{LT}.
\vspace{0.1cm}
\item
There exist $t_e\geq0$ such that 
\[ {\mathcal D}(T(t)) < \eta_1, \quad t \geq t_e. \]
\end{enumerate}
In fact, the smaller positive root $\eta_1$ can be calculated explicitly:
\[ \eta_1 = \frac{\mathcal D(A)}{\kappa_0- 2\hat{\kappa}_0 \|T_c^{0}\|_\tF } \left( \frac{2}{1+\sqrt{1-  \frac{8\kappa_0 \mathcal D(A)}{\kappa_0- 2\hat{\kappa}_0 \|T_c^{0}\|_\tF}}} \right). \]
Then, it follows from the claim above that 
\[ \limsup_{t\rightarrow\infty}\mathcal D(T(t) )\leq\eta_1={\mathcal D(A)\over{\kappa_0 - 2\hat{\kappa}_0 \|T_c^{0}\|_\tF }}{2\over{1+\sqrt{1-8 \kappa_0 \mathcal D(A)\over{\kappa_0-2 \hat{\kappa}_0 \|T_c^{0}\|_\tF }}}}.
\]
By letting $\kappa_0 \to \infty$, one has the desired estimate \eqref{C-2-2}. 
\end{proof}
\subsection{Low-rank LT models} \label{sec:3.2}
In the previous subsection, we have reviewed the emergent dynamics of the LT model in a general setting. In this subsection, we study two low-rank LT models which can be derived from the LT model on $\bbc^{d_1 \times d_2}$ and $\bbc^d$, respectively.

\subsubsection{The generalized Lohe matrix model} \label{sec:3.2.1}
In this part, we first derive a matrix aggregation model on $\bbc^{d_1 \times d_2}$ that can be reduced from the LT model. In the case of a square matrix $d_1 = d_2$, the Lohe matrix model serves as a first-order aggregation model on the subset of $\bbc^{d_1 \times d_2}$ which is a unitary group. Thus, the LT model can provide an aggregation model on the space of nonsquare matrices. More precisely, consider the Lohe tensor model \eqref{LT} with $m=2$:
\begin{align}
\begin{aligned} \label{C-6}
\dot{T}_j &= A_j T_j + \kappa_{00}(\mathrm{tr}(T_j^\dagger  T_j)T_c-\mathrm{tr}(T_c^\dagger  T_j)T_j) + + \kappa_{11}\mathrm{tr}(T_j^\dagger  T_c-T_c^\dagger  T_j)T_j \\
&\hspace{0.2cm} + \kappa_{10}(T_j T_j^\dagger  T_c-T_j T_c^\dagger  T_j) +  \kappa_{01}(T_cT_j^\dagger  T_j -T_jT_c^\dagger  T_j),
\end{aligned}
\end{align}
where $T_j^{\dagger}$ is a hermitian conjugate of $T_j$ and the free flow term $A_jT_j$ is defined as a rank-2 tensor via tensor contraction between a rank-4 tensor and a rank-2 tensor:
\[
[T_j]^{\dagger}_{\alpha \beta} = [{\overline T_j}]_{\beta \alpha}, \quad [A_jT_j]_{\alpha\beta}=[A_j]_{\alpha\beta\gamma\delta}[T_j]_{\gamma\delta}.
\]
For simplicity, we set
\[ \kappa_{00}=\kappa_{11}=0, \quad \kappa_1 := \kappa_{01}, \quad \kappa_2 := \kappa_{10}. \]
Then, system \eqref{C-6} reduces to the following simplified model, namely {\it ``the generalized Lohe matrix model''} \cite{H-P3}:
\begin{equation}
\begin{cases} \label{C-7}
\vspace{0.3cm} \displaystyle {\dot T}_j =A_j T_j +\kappa_{1}(T_cT_j^\dagger  T_j -T_j T_c^\dagger  T_j)+\kappa_{2}(T_j T_j^\dagger  T_c-T_j T_c^\dagger  T_j), \quad t >0, \\
\displaystyle T_j(0) =T_j^0 \in \bbc^{d_1 \times d_2}, \quad j = 1, \cdots, N,
\end{cases}
\end{equation}
where  $\kappa_{1}$ and $\kappa_{2}$ are nonnegative coupling strengths, and we have used Einstein summation convention. 

Next, we define a functional measuring deviations from the centroid of configuration for \eqref{C-7}:
\[ {\mathcal V}[T(t)] :=\frac{1}{N}\sum_{k=1}^N\|T_k(t)-T_c(t)\|_\tF^2 = 1 - \|T_c(t)\|_\tF^2, \quad t \geq 0. \]
Then, we show that $\mathcal V[T]$ converges to  a nonnegative constant $\mathcal V_\infty$.

\begin{theorem}\label{T3.2}
\emph{\cite{H-P3}}
Let $\{ T_j \}$ be a global solution to \eqref{C-7} with $\|T_j^0 \|_\tF = 1$. Then, the following assertions hold. 
\begin{enumerate}
\item
There exists a nonnegative constant ${\mathcal V}_\infty$ such that 
\[ \lim_{t \to \infty } {\mathcal V}[T] = {\mathcal V}_\infty.  \]
\item
The orbital derivative of ${\mathcal V}(T)$ tends to zero asymptotically: 
\begin{align*}
&\lim_{t\rightarrow\infty} \frac{\d}{\dt} {\mathcal V}[T(t)] = 0 \quad \textup{and} \\  &\lim_{t\rightarrow\infty} \Big( \|T_j T_c^\dagger -T_cT_j^\dagger \|_\tF + \|T_j^\dagger T_c-T_c^\dagger T_j \|_\tF \Big) = 0. 
\end{align*}
%\item
%The following estimate hold.
%\[  \frac{1}{N} \sum_{j=1}^N \left[ \kappa_{1} \int_0^\infty \|T_j T_c^\dagger -T_cT_j^\dagger \|_\tF^2 \,\dt+ \kappa_{2}\int_0^\infty \|T_j^\dagger T_c-T_c^\dagger T_j \|_\tF^2\,\dt \right] \leq 1-\|T_c^0\|_\tF^2. \] 
\end{enumerate}
\end{theorem}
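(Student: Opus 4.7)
The plan is to promote $\mathcal V[T(t)] = 1 - \|T_c(t)\|_\tF^2$ itself to a Lyapunov functional. Since Proposition \ref{P2.1} ensures $\|T_j(t)\|_\tF = 1$ for all $t \geq 0$, Jensen's inequality gives $\|T_c(t)\|_\tF \leq 1$, so $0 \leq \mathcal V[T(t)] \leq 1$. Both assertions will follow once we establish a dissipation identity of the form $\frac{d}{dt}\mathcal V[T(t)] \leq 0$ with an explicit expression for $-\frac{d}{dt}\mathcal V$ as a sum of squared commutator norms.

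The core computation is to differentiate $\|T_c\|_\tF^2 = \langle T_c, T_c\rangle_\tF$ and substitute the generalized Lohe matrix dynamics \eqref{C-7}. First I would expect the free-flow contribution $\frac{2}{N}\sum_j \mathrm{Re}\,\mathrm{tr}(T_c^\dagger A_j T_j)$ to vanish: in the homogeneous case $A_j \equiv A$ it reduces to $2\,\mathrm{Re}\langle T_c, A T_c\rangle_\tF$, and an index relabeling using the block skew-hermitian structure of $A$ forces this quantity to be zero (mirroring the calculation behind Proposition \ref{P2.1}). For the interaction terms I would expand each Frobenius inner product into a trace, invoke cyclicity, and use the identity $\mathrm{Re}\,\mathrm{tr}((T_c^\dagger T_j)^2) = \mathrm{Re}\,\mathrm{tr}((T_c T_j^\dagger)^2)$ (the two traces are complex conjugates via hermitian transposition). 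After reorganization, the expected dissipation identity is
\begin{equation*}
\frac{d}{dt}\|T_c\|_\tF^2 = \frac{\kappa_1}{N}\sum_{j=1}^N \|T_c T_j^\dagger - T_j T_c^\dagger\|_\tF^2 + \frac{\kappa_2}{N}\sum_{j=1}^N \|T_j^\dagger T_c - T_c^\dagger T_j\|_\tF^2,
\end{equation*}
where each squared norm arises from writing the skew-hermitian combinations $T_c T_j^\dagger - T_j T_c^\dagger$ and $T_j^\dagger T_c - T_c^\dagger T_j$ as minus the trace of their squares.

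Given this identity, $t \mapsto \|T_c(t)\|_\tF^2$ is non-decreasing and bounded above by $1$, hence convergent; equivalently, $\mathcal V[T(t)] \searrow \mathcal V_\infty \geq 0$, which yields (1). For (2), I would apply Barbalat's lemma: uniform boundedness of $\{T_j(t)\}$ on the unit Frobenius sphere together with the smooth polynomial right-hand side of \eqref{C-7} gives a uniform bound on $\frac{d^2}{dt^2}\|T_c\|_\tF^2$, so $\frac{d}{dt}\|T_c\|_\tF^2$ is uniformly continuous. Since its integral on $[0,\infty)$ is bounded by $1$, Barbalat forces $\frac{d}{dt}\mathcal V[T(t)] \to 0$. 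Non-negativity of each summand in the dissipation identity then delivers the termwise decay $\|T_j T_c^\dagger - T_c T_j^\dagger\|_\tF + \|T_j^\dagger T_c - T_c^\dagger T_j\|_\tF \to 0$ for every $j$.

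The main obstacle I anticipate is the algebraic bookkeeping in the dissipation identity: the interaction terms in \eqref{C-7} generate several tensor-trace monomials in $T_c, T_j, T_c^\dagger, T_j^\dagger$, and one must combine them — via the cyclic property, the skew-hermiticity $(T_c T_j^\dagger)^\dagger = T_j T_c^\dagger$, and the complex-conjugation identity for $\mathrm{tr}((T_c^\dagger T_j)^2)$ — into a clean sum of two squared commutator norms. A secondary technical point is the treatment of the free-flow contribution when the $A_j$ are heterogeneous; if the $A_j$ are not identical, the cancellation may fail, and the statement should presumably be read under a homogeneous-ensemble hypothesis, parallel to the condition $\mathcal D(A) = 0$ used in Theorem \ref{T3.1}.
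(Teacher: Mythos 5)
Your proposal is correct and follows essentially the same route as the cited source \cite{H-P3} (the survey itself states Theorem \ref{T3.2} without proof): the dissipation identity $\frac{\d}{\dt}\|T_c\|_\tF^2 = \frac{\kappa_1}{N}\sum_{j}\|T_cT_j^\dagger - T_jT_c^\dagger\|_\tF^2 + \frac{\kappa_2}{N}\sum_{j}\|T_j^\dagger T_c - T_c^\dagger T_j\|_\tF^2$ checks out, and monotone convergence plus Barbalat's lemma then give both assertions exactly as you describe. Your caveat about the free-flow contribution is also well taken: the cancellation $\mathrm{Re}\,\langle T_c, A T_c\rangle_\tF = 0$ requires homogeneous (or zero) free flows, a hypothesis the survey's statement leaves implicit.
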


\vspace{0.2cm}

In what follows, we consider {\it ``the reduced Lohe matrix model}'' which corresponds to the generalized Lohe matrix model \eqref{C-7} with $\kappa_2=0$:  \begin{align}\label{C-8}
\begin{cases}
{\dot T}_j =A_j T_j + \kappa_{1}(T_cT_j^\dagger  T_j -T_j T_c^\dagger  T_j), \quad t > 0, \\
T_j(0)=T_j^0, \quad j = 1, \cdots, N,
\end{cases}
\end{align}
subject to the initial conditions:
\begin{equation*} \label{C-9}
T_i^{0,\dagger} T_i^0 = T_j^{0,\dagger} T_j^0, \quad 1 \leq i, j \leq N. 
\end{equation*}
Let $\{T_j\}$ be a solution to \eqref{C-8}  with a specific natural frequency tensor $A_j$: 
\begin{equation*} 
[A_j]_{\alpha\beta\gamma\delta} :=[B_j]_{\alpha\gamma}\delta_{\beta\delta},
\end{equation*}
where $B_j$ is a rank-2 tensor. Then, by singular value decomposition of $T_j(t)$, one has
\[
T_j(t)=U_j(t)\Sigma_j V_j^\dagger .
\] 
Here, $\Sigma_j$ and $V_j$ are time-independent constant matrices, whereas $U_j=U_j(t)$ is  a time-dependent unitary matrix satisfying
\begin{align}\label{C-11}
\begin{cases}
\dot{U}_j= B_j U_j + \kappa_{1}(U_cD-U_j D^\dagger U_c^\dagger U_j), \quad t > 0, \\
U_j(0)=U_j^0, \quad j = 1, \cdots, N,
\end{cases}
\end{align}
where $D$ is a diagonal matrix and $B_jU_j$ is a usual matrix multiplication between $B_j$ and $U_j$.  Moreover, if   complete state aggregation occurs for \eqref{C-8}, then it also occurs for \eqref{C-11}, and vice versa. By algebraic manipulation, one can find a differential inequality for the diameter of $\{U_j\}$:
 \[
 \mathcal D(U) := \max_{1\leq i,j \leq N} \|U_i - U_j\|_\tF,\quad \mathcal D(B) := \max_{1\leq i,j\leq N} \|B_i - B_j\|_\tF.
 \]
For notational simplicity, we denote 
\begin{align*}
\begin{aligned}
& D :=\mathrm{diag}(\lambda_1^2, \cdots, \lambda_{d_1}^2), \quad 
\langle \lambda^2 \rangle :=\frac{1}{d_1}(\lambda_1^2+\lambda_2^2+\cdots+\lambda_{d_1}^2), \\
& \Delta(\lambda^2):=\max_{1\leq k\leq d_1}|\lambda_k^2- \langle \lambda^2 \rangle |, \quad  \mathcal{A} :=\langle \lambda^2 \rangle +\Delta(\lambda^2),\quad \mathcal{B} :=\langle \lambda^2 \rangle -\Delta(\lambda^2).
\end{aligned}
\end{align*}
Now, we are ready to provide the emergent dynamics of \eqref{C-11} as follows.
 \begin{theorem}\label{T3.3}
 \emph{\cite{H-P3}}
The following assertions hold. 
\begin{enumerate}
\item
(Complete state aggregation):~Suppose system parameters and initial data satisfy
\[ {\mathcal D}(B) = 0, \quad  {\mathcal A} > 0, \quad {\mathcal B} > 0, \quad   \kappa_1 > 0, \quad  {\mathcal D}(U^0)\leq \sqrt{\frac{2\mathcal{B}}{\mathcal{A}}}. \]
Then for any solution $\{U_j\}$ to  \eqref{C-11}, we have 
\[
\lim_{t\to\infty} \mathcal D(U) =0.
\]
Moreover, the convergence rate is exponential. 
\vspace{0.1cm}
\item
(Practical aggregation):~Suppose system parameters and initial data satisfy
\[  {\mathcal A} > 0, \quad {\mathcal B} > 0, \quad \kappa_1 > {\mathcal D}(B)\cdot\sqrt{\frac{27\mathcal{A}}{32\mathcal{B}^3}}>0,\quad {\mathcal D}(U^0)<\alpha_2, \]
where $\alpha_2$ is a largest positive root of $g(x)= \mathcal{A}x^3-2\mathcal{B}x+\frac{{\mathcal D}(B)}{\kappa_1} = 0$, and let $\{U_j\}$ be a solution to \eqref{F-4}. Then, one has practical aggregation:
\[
\lim_{\kappa_1\rightarrow\infty}\limsup_{t\rightarrow\infty} {\mathcal D}(U)=0.
\]
\end{enumerate}
\end{theorem}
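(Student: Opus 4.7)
The plan is to follow the Lyapunov diameter approach used for Theorem \ref{T3.1}: derive a closed cubic differential inequality for $\mathcal{D}(U)$ along solutions of \eqref{C-11} and then perform a scalar comparison against the cubic $g$ appearing in the statement. Concretely, I aim to establish the bound
\begin{equation*}
\frac{\d}{\dt}{\mathcal D}(U) \;\leq\; \kappa_1\bigl[\mathcal{A}\,{\mathcal D}(U)^{3} - 2\mathcal{B}\,{\mathcal D}(U)\bigr] + {\mathcal D}(B), \qquad \mbox{a.e.}~t>0,
\end{equation*}
which is precisely $\dot{\mathcal{D}}(U) \leq \kappa_1\, g({\mathcal D}(U))$. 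Once this inequality is in hand, both parts of the theorem reduce to elementary analysis of a scalar cubic.

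To extract the inequality I would, at almost every $t$, pick an extremal pair $(i,j)$ realising ${\mathcal D}(U)(t)=\|U_i-U_j\|_\tF$, differentiate $\tfrac{1}{2}\|U_i-U_j\|_\tF^{2}$, and substitute \eqref{C-11}. The linear free-flow contribution $B_iU_i-B_jU_j$ is controlled by $\mathcal{D}(B)\,\mathcal{D}(U)$ via unitarity of $U_k$ and $\|U_k\|_\tF=\sqrt{d_1}$. The essential algebra is in the cubic interaction $\kappa_1(U_cD-U_kD^{\dagger}U_c^{\dagger}U_k)$: the diagonal structure $D=\mathrm{diag}(\lambda_1^2,\ldots,\lambda_{d_1}^2)$ is decisive. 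Splitting $\lambda_k^2=\langle\lambda^2\rangle+(\lambda_k^2-\langle\lambda^2\rangle)$ separates the bilinear form $\langle U_i-U_j,\,U_cD-U_kD^{\dagger}U_c^{\dagger}U_k\rangle_\tF$ into a mean-value piece that, after exploiting $U_k^{\dagger}U_k=I$, produces a coercive $-\mathcal{B}\,{\mathcal D}(U)^{2}$ contribution, and a fluctuation piece bounded with $|\lambda_k^2-\langle\lambda^2\rangle|\leq\Delta(\lambda^2)$ that yields the cubic correction $\mathcal{A}\,{\mathcal D}(U)^{3}$. Symmetrising in $(i,j)$ and collecting terms produces exactly the coefficients $\mathcal{A}=\langle\lambda^2\rangle+\Delta(\lambda^2)$ and $\mathcal{B}=\langle\lambda^2\rangle-\Delta(\lambda^2)$.

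With the cubic bound at hand, part (i) is essentially immediate: taking ${\mathcal D}(B)=0$ gives $\dot{\mathcal D}(U) \leq -\kappa_1{\mathcal D}(U)\bigl[2\mathcal{B}-\mathcal{A}\,{\mathcal D}(U)^{2}\bigr]$, so the ball $\{{\mathcal D}(U)\leq\sqrt{2\mathcal{B}/\mathcal{A}}\}$ is positively invariant, the bracket stays bounded below by a positive constant along the flow, and Gr\"onwall yields exponential decay (the boundary case ${\mathcal D}(U^0)=\sqrt{2\mathcal{B}/\mathcal{A}}$ is handled by a short continuity argument). For part (ii), the threshold $\kappa_1 > \mathcal{D}(B)\sqrt{27\mathcal{A}/(32\mathcal{B}^3)}$ is exactly the condition $g(x_\ast)<0$ at the local minimiser $x_\ast=\sqrt{2\mathcal{B}/(3\mathcal{A})}$, so $g$ admits two positive roots $0<\alpha_1<\alpha_2$. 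From the sign pattern of $g$ and $\dot{\mathcal{D}}(U)\leq\kappa_1 g({\mathcal D}(U))$ one deduces that $[0,\alpha_2)$ is forward invariant and ${\mathcal D}(U)$ enters $[0,\alpha_1]$ in finite time; since $g$ depends on $\kappa_1$ only through the constant term $\mathcal{D}(B)/\kappa_1\to 0$, continuity of roots in parameters gives $\alpha_1\to 0$ as $\kappa_1\to\infty$, yielding the advertised practical aggregation.

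The main obstacle is the first step: the interaction term is genuinely nonlinear and entangles the unitary factor $U_k$ with the singular-value data through $D$, so organising the algebra so that the $\lambda_k^2$'s assemble into precisely $\mathcal{A}$ and $\mathcal{B}$ (rather than a looser combination that would fail to close the cubic comparison) is delicate and is exactly where the spectral-spread quantity $\Delta(\lambda^2)$ plays its role. The remaining scalar ODE comparison is routine and largely parallel to the arguments appearing in the proof of Theorem \ref{T3.1}.
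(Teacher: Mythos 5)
Your proposal follows essentially the same route as the paper: the paper's own (sketched) proof rests on the two-sided cubic differential inequality $-2\kappa_1\mathcal{A}\,\mathcal{D}(U)+\kappa_1\mathcal{A}\,\mathcal{D}(U)^3 \le \tfrac{\d}{\dt}\mathcal{D}(U)\le -2\kappa_1\mathcal{B}\,\mathcal{D}(U)+\kappa_1\mathcal{A}\,\mathcal{D}(U)^3$ for the homogeneous case and on a comparison with the cubic $g$ (as in Theorem \ref{T3.1}) for the practical-aggregation case, which is exactly your $\dot{\mathcal{D}}(U)\le\kappa_1 g(\mathcal{D}(U))$ with the correctly computed threshold $\kappa_1>\mathcal{D}(B)\sqrt{27\mathcal{A}/(32\mathcal{B}^3)}$. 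Your account is a correct and somewhat more detailed rendering of the same argument, the only soft spot being the boundary case $\mathcal{D}(U^0)=\sqrt{2\mathcal{B}/\mathcal{A}}$, where the upper differential inequality alone gives only non-increase and a genuinely strict mechanism (deferred to \cite{H-P3} in the paper as well) is needed.
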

\begin{proof} The first statement for the complete state aggregation is based on the following differential inequalities:
\[ -2\kappa_1 \mathcal{A} {\mathcal D}(U)+\kappa_1 \mathcal{A} {\mathcal D}(U)^3 \leq\frac{\d}{\dt} {\mathcal D}(U) \leq-2\kappa_1 \mathcal{B} {\mathcal D}(U)+\kappa_1 \mathcal{A} {\mathcal D}(U)^3,
\]
where $\mathcal A$ and $\mathcal B$ are constants determined by the diagonal matrix $D$. This implies the desired upper and lower bound estimates for ${\mathcal D}(U)$. In contrast, the proof of the second statement will be done by similar arguments as in Theorem \ref{T3.1}. For details, we refer the reader to \cite{H-P3}. 
\end{proof}

\subsubsection{The Lohe hermitian sphere model} \label{sec:3.2.2} 
In this part, we consider a reduction of the LT model on the hermitian sphere $\{z\in \bbc^d: |z|=1\}$. In this case, the LT model reduces to 
\begin{equation} \label{C-13}
\dot{z}_j= \Omega_j z_j +\kappa_{0} (z_c\langle z_j, z_j \rangle-z_j \langle z_c. z_j \rangle )+\kappa_1(\langle{z_j, z_c}\rangle- \langle z_c z_j\rangle) z_j,
\end{equation}
where $\langle z, w \rangle$ and $\Omega_j$ are standard inner product in $\bbc^d$ and a skew-hermitian $d \times d$ matrix satisfying
\[ \langle z, w \rangle = [{\bar z}]_{\alpha} [w]_{\alpha}, \quad  \Omega_j^\dagger  = -\Omega_j. \]
Here we used the Einstein summation convention. We refer the reader to \cite{B-H-H-P,B-H-P,H-H-P} for the emergent dynamics of \eqref{C-13}. 

Note that for a real-valued rank-1 tensor $z_j \in \bbr^d$, the coupling terms involving   $\kappa_1$ become identically zero thanks to the symmetry of inner product in ${\mathbb R}^d$. Hence, we can recover the swarm sphere model on $\bbs^{d-1}$:
\begin{equation}\label{C-14}
\dot{x}_j=\Omega_j x_j+\kappa_0 \Big (\langle{x_j, x_j}\rangle x_c-\langle{x_c, x_j}\rangle x_j \Big ).
\end{equation}
The emergent dynamics of \eqref{C-14} has been extensively studied in a series of papers
 \cite{C-C-H,C-H1,C-H2,C-H3,C-H4,J-C,Lo2,Lo3,M-T-G,T-M,Zhu}. 
In what follows, to investigate the nonlinear effect on the collective behaviors of \eqref{C-13} due to two coupling terms, we consider for a while the following two special cases:
 \[ \textup{(i)} ~~\Omega_j = \Omega, \quad  \kappa_1 = 0;  \qquad \textup{(ii)}~~ \Omega_j = \Omega,   \quad \kappa_0 = 0,\quad j=1,\cdots,N. \]
Then, the corresponding models for each case read as follows:
 \begin{align} \label{C-15}
 \begin{aligned}
&\textup{(i)}~~ \dot{z}_j = \Omega z_j + \kappa_0(\langle{z_j, z_j}\rangle z_c-\langle{z_c, z_j}\rangle z_j). \\
&\textup{(ii)}~~\dot{z}_j = \Omega z_j + \kappa_1(\langle{z_j, z_c}\rangle-\langle{z_c, z_j}\rangle)z_j.
\end{aligned}
\end{align}
From the models in \eqref{C-15}, we define a functional  for $\{z_j\}$:
\[
\mathcal D(Z):= \max_{1\leq i,j\leq N} |1-\langle z_i,z_j\rangle|
\]
In the following proposition, we summarize  results on the emergent dynamics of the models in \eqref{C-15} without proofs.

 \begin{proposition} \label{P3.1}
 \emph{\cite{H-P6}}
 The following assertions hold.
 \begin{enumerate}
 \item
Suppose system parameters and initial data satisfy
\[
\kappa_0 > 0, \quad  |z_j^{0}  |=1,\quad \max_{i\neq j}|1-\langle{z_i^{0}, z_j^{0}}\rangle|<\frac12,
\]
and let $ \{ z_j \}$ be a global solution to $\eqref{C-15}_1$ with the initial data $\{z_j^0\}$. Then, there exists a positive constant $\Lambda$ depending on the initial data such that 
\[ {\mathcal D}(Z(t)) \leq {\mathcal D}(Z^{0}) e^{- \kappa_0 \Lambda t}, \quad t \geq 0. \]
\item
Suppose system parameters and initial data satisfy
\[
\kappa_1 > 0, \quad |z_j^{0}  |=1,
\]
and let  $\{ z_j \}$ be a global solution to $\eqref{C-15}_2$ with the  initial data $\{z_j^{0} \}$. Then, there exist a  time-dependent phase function $\theta_j=\theta_j(t)$ such that  
\[ z_j(t)= e^{{\mathrm i} \theta_j(t)} z^{0}_j, \quad j = 1, \cdots, N, \]
and $\theta_j$ is a solution to the Kuramoto-type model with frustrations:
\begin{equation} \label{C-16}
\begin{cases}
\displaystyle \dot{\theta}_j=\frac{2 \kappa_1}{N}\sum_{k=1}^N R_{jk}^{0} \sin(\theta_k-\theta_j+\alpha_{jk}^{0}), \quad t > 0, \\
\displaystyle \theta_j(0)=0,\quad j=1,\cdots,N,
\end{cases}
\end{equation}
where $R_{jk}^{0}$ and $\alpha_{jk}^{0}$ are determined by initial data:
\[  \langle{z_j^{0}, z_k^{0}} \rangle=R^{0}_{jk}e^{\mathrm{i} \alpha^0_{jk}}.  \]
\end{enumerate}
\end{proposition}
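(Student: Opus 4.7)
The plan is to treat the two statements separately, both relying on the fact that $|z_j(t)|=1$ is preserved along the flow; this follows by the same cubic ``gain minus loss'' structure that underlies Proposition \ref{P2.1}, combined with skew-hermiticity of $\Omega$. The Lyapunov functional of choice is the two-point correlation $h_{ij}(t):=1-\langle z_i(t),z_j(t)\rangle$, so that $\mathcal{D}(Z)=\max_{i,j}|h_{ij}|$.

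For (1), I would differentiate $h_{ij}$ along $\eqref{C-15}_1$. The free-flow contribution is $-\langle \Omega z_i,z_j\rangle-\langle z_i,\Omega z_j\rangle=-\langle z_i,(\Omega^{\dagger}+\Omega)z_j\rangle=0$ by skew-hermiticity, so $\Omega$ drops out entirely. The coupling contribution, after using $|z_i|=|z_j|=1$ and expanding $z_c=\frac{1}{N}\sum_k z_k$, rearranges into a linear dissipative part $-2\kappa_0 h_{ij}$ plus quadratic error terms that are uniformly controlled by $\mathcal{D}(Z)^2$. Taking the maximum over pairs (and using a Danskin-type argument to justify differentiating the max a.e.), this yields a differential inequality of the form
\[
\tfrac{\d}{\dt}\mathcal{D}(Z)\ \leq\ -\kappa_0\bigl(2-C\,\mathcal{D}(Z)\bigr)\mathcal{D}(Z)
\]
for a constant $C$ coming from the quadratic residue. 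The smallness assumption $\mathcal{D}(Z^{0})<\tfrac12$ ensures that the bracket stays bounded below by some $\Lambda>0$ throughout the forward evolution (by a positive-invariance argument on the sublevel set $\{\mathcal{D}(Z)<1/2\}$), and Grönwall's inequality then gives the claimed exponential decay.

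For (2), the observation is that the coupling term in $\eqref{C-15}_2$ equals $2\mi\kappa_1\,\mathrm{Im}\langle z_j,z_c\rangle\,z_j$, a purely imaginary scalar multiple of $z_j$; this already suggests a pure-phase ansatz. I would first conjugate out the common free flow by setting $z_j(t)=e^{t\Omega}w_j(t)$; since $e^{t\Omega}$ is unitary, all Frobenius/hermitian inner products are invariant, and $w_j$ satisfies $\dot w_j=2\mi\kappa_1\,\mathrm{Im}\langle w_j,w_c\rangle\,w_j$ with $w_j(0)=z_j^{0}$. Inserting the ansatz $w_j(t)=e^{\mi\theta_j(t)}z_j^{0}$ with $\theta_j(0)=0$ and equating, one gets $\dot\theta_j=2\kappa_1\,\mathrm{Im}\langle w_j,w_c\rangle$. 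Expanding $\langle w_j,w_c\rangle=\tfrac{1}{N}\sum_k e^{\mi(\theta_k-\theta_j)}\langle z_j^{0},z_k^{0}\rangle$ and writing the initial correlations in polar form $\langle z_j^{0},z_k^{0}\rangle=R_{jk}^{0}e^{\mi\alpha_{jk}^{0}}$, the imaginary part becomes a sum of sines, producing exactly \eqref{C-16}. The verification that the candidate $w_j=e^{\mi\theta_j}z_j^{0}$ indeed solves the ODE (hence the existence of $\theta_j$) then follows by standard ODE theory applied to the closed system \eqref{C-16} for $\{\theta_j\}$.

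The main obstacle is the quadratic-error bookkeeping in Part (1): one must verify that after taking the max and differentiating a.e., the non-dissipative terms really are absorbed by the linear part under the sharp threshold $\tfrac12$, and that the sublevel set $\{\mathcal{D}(Z)<\tfrac12\}$ is forward-invariant so that the Grönwall argument closes. Part (2) is conceptually straightforward once the unitary conjugation by $e^{t\Omega}$ is performed, which is the small notational subtlety hidden in the statement $z_j(t)=e^{\mi\theta_j(t)}z_j^{0}$ (strictly speaking one should write $e^{t\Omega}e^{\mi\theta_j(t)}z_j^{0}$, but the free flow drops out of $\mathcal{D}(Z)$ and of the phase dynamics).
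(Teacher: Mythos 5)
Your proposal is correct and follows essentially the same route the paper intends: the paper states Proposition \ref{P3.1} explicitly ``without proofs,'' but its methodology elsewhere (e.g., the sketch of Theorem \ref{T3.4}) is exactly your two-point correlation functional $1-\langle z_i,z_j\rangle$, whose derivative along $\eqref{C-15}_1$ factors as $-\kappa_0(\langle z_i,z_c\rangle+\langle z_c,z_j\rangle)(1-\langle z_i,z_j\rangle)$ and yields your Gr\"onwall inequality with $\Lambda$ determined by $\mathcal{D}(Z^0)<\tfrac12$. Your reduction of $\eqref{C-15}_2$ to the frustrated Kuramoto system via the purely imaginary coupling $2\mi\kappa_1\,\mathrm{Im}\langle z_j,z_c\rangle\,z_j$ and the phase ansatz is likewise the intended argument, including your correct observation about the rotating frame $e^{t\Omega}$.
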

\begin{remark}
\textup{(i)}~~In Proposition \ref{P3.1}(2), $R_{jk}^{0}$ and $\alpha^0_{jk}$ satisfy symmetry and anti-symmetry properties, respectively:
\[   R_{jk}^{0} = R_{kj}^{0}, \qquad \alpha^0_{jk} = -\alpha^0_{kj}, \qquad j,k = 1, \cdots, N. \]
\textup{(ii)}~~System \eqref{C-16} can be rewritten as a gradient flow with the following potential $V$:
  \[ \dot{\Theta}=-\nabla_{\Theta} V[\Theta], \quad t > 0, \quad  V[\Theta] := \frac{\kappa_1}{N}\sum_{i, j = 1}^{N} R^{0}_{ij} \Big(1- \cos(\theta_i-\theta_j+\alpha^0_{ji}) \Big). \]
\end{remark}
We now return to the full model \eqref{C-13} with the same free flows. Note that the term  involving  $\kappa_0$  corresponds to the swarm sphere model and the term with $\kappa_1$ describes the complex nature of underlying phase space. For an ensemble $\{z_j \}$, we define the norm of the centroid:
\[ \rho (t):= \left  | \frac{1}{N} \sum_{j=1}^{N} z_j(t) \right  |. \]

\begin{theorem} \label{T3.4}
 \emph{\cite{H-P6}} Suppose system parameters and initial data satisfy
\begin{equation} \label{Z-10}
 \Omega_j = \Omega, \quad j = 1, \cdots, N, \quad 0< \kappa_1 <  \frac{1}{4} \kappa_0, \quad \rho^{0} > \frac{N-2}{N},  
 \end{equation}
and let $\{z_j \}$ be a solution to \eqref{C-13}. Then for each $i, j = 1, \cdots, N$, two-point correlation function $\langle z_i, z_j \rangle$ converges to 1 exponentially fast, i.e., complete state aggregation emerges asymptotically. 
\end{theorem}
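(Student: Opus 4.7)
The plan is to exploit the common free-flow assumption to pass to a rotating frame, then track the order parameter $\rho(t):=|z_c(t)|$: its monotonicity together with the threshold $\rho^{0}>(N-2)/N$ will be bootstrapped into exponential decay of all pairwise correlations.

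First, since $\Omega_j=\Omega$ is independent of $j$, I set $w_j(t):=e^{-t\Omega}z_j(t)$. Skew-hermiticity of $\Omega$ makes $e^{-t\Omega}$ unitary, so $|w_j|=|z_j|=1$, $\langle w_i,w_j\rangle=\langle z_i,z_j\rangle$, $|w_c|=\rho$, and a direct substitution shows that $\{w_j\}$ satisfies \eqref{C-13} with $\Omega=0$. Hence I may assume $\Omega=0$ throughout. With $a_j := \langle z_c,z_j\rangle$, the equation simplifies to $\dot z_j = \kappa_0(z_c - a_j z_j) - 2\mathrm{i}\kappa_1(\mathrm{Im}\,a_j)z_j$. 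Computing $\frac{d\rho^2}{dt}=\frac{2}{N}\mathrm{Re}\sum_j\langle z_c,\dot z_j\rangle$ and using $\sum_j a_j=N\rho^2$ together with the identity $\mathrm{Re}(a_j^2)=|a_j|^2-2(\mathrm{Im}\,a_j)^2$, I expect to obtain
\[
\frac{d\rho^2}{dt}=\frac{2\kappa_0}{N}\Bigl[N\rho^2-\sum_j|a_j|^2\Bigr]+\frac{4(\kappa_0+\kappa_1)}{N}\sum_j(\mathrm{Im}\,a_j)^2\geq 0,
\]
the first bracket being nonnegative by the Cauchy-Schwarz bound $|a_j|\leq \rho$. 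Hence $\rho$ is nondecreasing and $\rho(t)\geq\rho^{0}>(N-2)/N$ for all $t$. Combined with the identity $\sum_{i,j}|z_i-z_j|^2=2N^2(1-\rho^2)$ and the pointwise bound $|1-\langle z_i,z_j\rangle|\leq|z_i-z_j|$ (from $1-\langle z_i,z_j\rangle=\langle z_i,z_i-z_j\rangle$), it suffices to prove that $1-\rho^2$ decays exponentially.

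The decisive estimate is thus $\frac{d\rho^2}{dt}\geq C(1-\rho^2)$ with $C>0$ explicit. Writing $v_j:=z_j-z_c$ and decomposing $v_j=v_j^{\parallel}+v_j^{\perp}$ into components parallel and perpendicular to $z_c$, the unit-norm constraint $|z_c+v_j|^2=1$ yields a quantitative relation $|v_j^\perp|^2 = (1-\rho^2)-2\mathrm{Re}\,b_j-|v_j^\parallel|^2$, where $b_j:=a_j-\rho^2$. Plugging this into the formula above rewrites $N\rho^2-\sum|a_j|^2$ as $\rho^2\sum_j|v_j^\perp|^2$ up to phase corrections, and the threshold $\rho^{0}>(N-2)/N$ rules out the extremal antipodal configuration (all but one $z_k$ opposite to the remaining one) which would saturate the Cauchy-Schwarz used above. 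The gap assumption $\kappa_1<\kappa_0/4$ is then employed to absorb the $\kappa_1$-generated imaginary contributions into the $\kappa_0$-swarm contribution, via a discriminant-type quadratic closure that produces a strictly positive $C=C(\kappa_0,\kappa_1,\rho^{0},N)$.

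The main obstacle lies precisely in this geometric-to-dynamical translation: converting the scalar threshold $\rho>(N-2)/N$ into a sharp coercive gap in the Cauchy-Schwarz bound for $\sum|a_j|^2$, and verifying that $\kappa_1<\kappa_0/4$ is the tight ratio under which the resulting quadratic inequality in $(1-\rho^2)$ retains a negative discriminant. Once that coercivity is secured, Gronwall's lemma promotes the monotone decay of $1-\rho^2$ to exponential decay, and the identities and pointwise estimates of Step 2 transfer this uniformly to every two-point correlation $\langle z_i, z_j\rangle \to 1$.
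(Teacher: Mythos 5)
Your route is genuinely different from the paper's. The paper works with the Lyapunov functional $\mathcal{L}(Z)=\max_{i,j}|1-\langle z_i,z_j\rangle|^2$ and derives, at an extremal pair $(i_0,j_0)$, the inequality $\frac{\d}{\dt}\mathcal{L}(Z)\leq-\kappa_0\mathcal{L}(Z)\bigl(\mathrm{Re}\langle z_{i_0}+z_{j_0},z_c\rangle-\tfrac{4\kappa_1}{\kappa_0}\bigr)$; the hypothesis $\rho^{0}>(N-2)/N$ enters only through an auxiliary step showing $\langle z_i,z_c\rangle\to1$, which makes the bracket eventually bounded below by a positive $\varepsilon$, and $\kappa_1<\kappa_0/4$ is what keeps the limiting margin positive. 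You instead make the order parameter $\rho$ carry the whole argument. Your reduction steps (rotating frame, the formula for $\frac{\d}{\dt}\rho^2$, monotonicity of $\rho$, and the transfer via $|1-\langle z_i,z_j\rangle|\leq|z_i-z_j|$ and $\sum_{i,j}|z_i-z_j|^2=2N^2(1-\rho^2)$) are all correct, and the strategy is viable.

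Two points, however. First, the decisive coercivity $\frac{\d}{\dt}\rho^2\geq C(1-\rho^2)$ is only asserted, but it can be closed elementarily: write $z_j=\alpha_j\hat z_c+y_j$ with $\hat z_c=z_c/\rho$ and $y_j\perp z_c$; then $\rho^2-|a_j|^2=\rho^2|y_j|^2$, $(\mathrm{Im}\,a_j)^2=\rho^2(\mathrm{Im}\,\alpha_j)^2$, and $|y_j|^2+(\mathrm{Im}\,\alpha_j)^2=1-(\mathrm{Re}\,\alpha_j)^2$. Since $\frac1N\sum_k\mathrm{Re}\,\alpha_k=\rho$ and $\mathrm{Re}\,\alpha_k\leq1$, each $\mathrm{Re}\,\alpha_j\geq N\rho-(N-1)$, hence $1+\mathrm{Re}\,\alpha_j\geq N\rho^{0}-(N-2)=:\delta>0$ and $\sum_j\bigl(1-(\mathrm{Re}\,\alpha_j)^2\bigr)\geq\delta N(1-\rho)\geq\tfrac{\delta N}{2}(1-\rho^2)$; inserting this into your identity yields $\frac{\d}{\dt}\rho^2\geq\kappa_0(\rho^{0})^2\delta\,(1-\rho^2)$. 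Second, and more seriously as a matter of logic, your account of where $\kappa_1<\kappa_0/4$ enters is wrong: in your own formula the $\kappa_1$ contribution to $\frac{\d}{\dt}\rho^2$ is $+\tfrac{4\kappa_1}{N}\sum_j(\mathrm{Im}\,a_j)^2\geq0$, so it helps rather than hurts, there is nothing to "absorb," and no discriminant condition arises. The completed argument never uses $\kappa_1<\kappa_0/4$ and thus proves a statement stronger than the theorem (any $\kappa_1\geq0$ suffices) — plausible, since for $d=1$ the model collapses to the Kuramoto model with coupling $2(\kappa_0+\kappa_1)$ — but you should say so explicitly and double-check, rather than invoke a "quadratic closure" step that does not exist.
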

\begin{proof} We give a brief sketch for a proof. Details can be found in Theorem 4.1 \cite{H-P6}.  First, note that 
\[   |z_i - z_j  |^2 =  |z_i |^2 +  |z_j |^2 - 2 \mbox{Re} \langle z_i, z_j \rangle = 2 (1 - \mbox{Re} \langle z_i, z_j \rangle)  \leq 2 |1 - \langle z_i, z_j \rangle|. \]
Thus, once we can show that $ |1 - \langle z_i, z_j \rangle|$ tends to zero exponentially fast, then it directly follows that ${\mathcal D}(Z)$ tends to zero exponentially fast. Thus, we introduce a functional
\[ \mathcal{L}(Z) :=\max_{1\leq i, j \leq N}|1-\langle{z_i, z_j}\rangle|^2. \]
By detailed and straightforward calculation, one can derive differential inequality for ${\mathcal L}(Z)$:
\[
\frac{\d}{\dt}\mathcal{L}(Z)\leq-\kappa_0\mathcal{L}(Z)\left(\mathrm{Re}(\langle{z_{i_0}+ z_{j_0}, z_c}\rangle)-\frac{4\kappa_1}{\kappa_0}\right),
\]
where $i_0$ and $j_0$ are extremal indices such that 
\[ \mathcal{L}(Z) =:|1-\langle{z_{i_0}, z_{j_0}}\rangle|^2. \]
On the other hand, one can show that under the assumption \eqref{Z-10} on $\rho^0$, the quantity $\langle z_i, z_c \rangle$ tends to 1 asymptotically. Again, by the assumption on the coupling strengths, there exist positive constants $T$ and $\varepsilon$ such that 
\[
\mathrm{Re}(\langle{z(t)_{i_0}+z(t)_{j_0}, z_c}\rangle)-\frac{4\kappa_1}{\kappa_0}>\varepsilon, \quad \mbox{$t > T$}.
\]
This yields
\[ \frac{\d}{\dt}\mathcal{L}(Z)\leq-\kappa_0 \varepsilon \mathcal{L}(Z), \quad t > T. \]
Hence, one gets the exponential decay of ${\mathcal L}(Z)$. 
\end{proof}

Before we close this subsection, we discuss the swarm double sphere (SDS) model on $\bbs^{d_1-1} \times \bbs^{d_2-1}$ which was recently introduced by  Lohe \cite{Lo1}:
\begin{align}\label{SDS}
\begin{cases}
\dot{u}_i=\Omega_iu_i+\displaystyle\frac{\kappa}{N}\sum_{j=1}^N\langle v_i, v_j\rangle (u_j-\langle u_i, u_j\rangle u_i) ,\quad t>0, \\
\dot{v}_i=\Lambda_i v_i+\displaystyle\frac{\kappa}{N}\sum_{j=1}^N \langle u_i, u_j\rangle(v_j-\langle v_i, v_j\rangle v_i),\\
(u_i, v_i)(0)=(u_i^0, v_i^0) \in\bbs^{d_1-1} \times \bbs^{d_2-1},\quad 1\leq i \leq N, 
\end{cases}
\end{align}
where $\Omega_i \in {\mathbb R}^{d_1 \times d_1}$ and $\Lambda_i \in  {\mathbb R}^{d_2 \times d_2}$ are real skew-symmetric matrices, respectively, and $\kp$ denotes the (uniform) nonnegative coupling strength. For homogeneous zero free flows
\[ \Omega_i = O_d, \quad \Lambda_i = O_d, \quad i = 1, \cdots, N, \]
system \eqref{SDS} can be represented as a gradient flow. More precisely, we set an analytical potential ${\mathcal E}_s$ as 
\begin{equation*} \label{C-16-1} 
\mathcal{E}_s(U, V) := 1-\frac{1}{N^2}\sum_{i, j=1}^N\langle u_i,u_j\rangle \langle v_i, v_j\rangle.
\end{equation*}
Then,  system \eqref{SDS} can recast as a gradient system on the compact state space $(\bbs^{d_1-1} \times \bbs^{d_2-1})^N$:
\begin{equation}
\begin{cases} \label{C-16-2}
\vspace{0.3cm} \displaystyle {\dot u}_i =-\frac{N\kappa}{2} {\mathbb P}_{T_{u_i}\bbs^{d_1-1}} \Big( \nabla_{u_i}\mathcal{E}_s(U, V) \Big), \\
\displaystyle {\dot v}_i =-\frac{N\kappa}{2}  {\mathbb P}_{T_{v_i}\bbs^{d_2-1}} \Big( \nabla_{v_i}\mathcal{E}_s(U, V)\Big),
\end{cases}
\end{equation}
where   projection operators onto the tangent spaces of $\bbs^{d_1-1}$ and $\bbs^{d_2-1}$ at $u_i$ and $v_i$, respectively, are defined by the  following explicit formula: for $w_1 \in \bbr^{d_1}$ and $w_2 \in \bbr^{d_2}$, 
\begin{equation*} \label{C-16-3}
{\mathbb P}_{T_{u_i}\bbs^{d_1-1}} (w_1) := w_1 - \langle w_1, u_i \rangle u_i, \quad {\mathbb P}_{T_{v_i}\bbs^{d_2-1}} (w_2) := w_2 - \langle w_2, v_i \rangle v_i.
\end{equation*}
By the  standard convergence result on a gradient system with analytical potential on a compact space, one can derive the following convergence result for all initial data. 
\begin{proposition}\label{P3.2}
The following assertions hold. 
\begin{enumerate}
\item
Let $\{(u_i, v_i)\}$ be a solution to \eqref{C-16-2}. Then, there exists a constant asymptotic state $(U^{\infty}, V^{\infty}) \in (\bbs^{d_1 - 1})^{N} \times (\bbs^{d_2 - 1})^{N}$ such that 
\[
\lim_{t\to\infty} (U(t), V(t)) = (U^\infty, V^\infty). 
\]
\item
Suppose initial data satisfy 
\begin{equation} \label{C-16-3}
\min_{1\leq i, j\leq N }\langle u_i^0, u_j^0\rangle>0,\quad \min_{1\leq i, j\leq N}\langle v_i^0, v_j^0\rangle>0,
\end{equation}
and let $\{(u_i, v_i)\}$ be a solution to   system \eqref{C-16-2}.  Then, one has   complete state aggregation:
\[
\lim_{t\to\infty} \max_{1\leq i,j\leq N}  |u_i(t)-u_j(t)  |=0,\quad \lim_{t\to\infty}  \max_{1\leq i,j\leq N}   |v_i(t)-v_j(t) |=0.
\]
\end{enumerate}
\end{proposition}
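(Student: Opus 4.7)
My plan is to handle part (1) by the classical Łojasiewicz gradient theorem for real analytic potentials on compact analytic manifolds, and to handle part (2) by combining a forward invariance argument for the positive-correlation region with a rigidity analysis of the equilibrium equation on that region.

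\textbf{Part (1).} The potential $\mathcal{E}_s$ is a polynomial in the inner products $\langle u_i, u_j\rangle$ and $\langle v_i, v_j\rangle$, so it is real analytic on the compact real analytic manifold $M := (\bbs^{d_1-1} \times \bbs^{d_2-1})^N$. Since \eqref{C-16-2} is (up to the positive multiplicative constant $N\kappa/2$) the negative Riemannian gradient of $\mathcal{E}_s$ with respect to the product of round metrics, and every trajectory is automatically bounded in $M$, the Łojasiewicz gradient inequality at every equilibrium forces finite arc length and convergence to a single limit $(U^\infty, V^\infty)$, which is a critical point of $\mathcal{E}_s$.

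\textbf{Part (2).} Write $a_{ij}(t) := \langle u_i(t), u_j(t)\rangle$, $b_{ij}(t) := \langle v_i(t), v_j(t)\rangle$, $\rho_u(t) := \min_{i,j} a_{ij}(t)$, and $\rho_v(t) := \min_{i,j} b_{ij}(t)$. Differentiating $a_{ij}$ along \eqref{SDS} with $\Omega_i \equiv 0$ gives
\[
\frac{d}{dt} a_{ij} = \frac{\kappa}{N}\sum_{k=1}^{N} \bigl[\, b_{ik}(a_{kj} - a_{ik} a_{ij}) + b_{jk}(a_{ik} - a_{jk} a_{ij}) \,\bigr].
\]
If $(i^*, j^*)$ realizes $\rho_u(t)$, minimality $a_{kj^*}, a_{ki^*} \geq a_{i^*j^*}$ together with $a_{i^*k}, a_{j^*k} \leq 1$ makes each bracket at least $a_{i^*j^*}(1 - a_{i^*k}) \geq 0$ (resp.\ $a_{i^*j^*}(1 - a_{j^*k}) \geq 0$). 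On the time interval where $\rho_v > 0$ the prefactors $b_{i^*k}, b_{j^*k}$ are positive, so $\frac{d}{dt} a_{i^*j^*} \geq 0$; a Danskin envelope argument upgrades this to $\dot\rho_u \geq 0$ in the Dini sense, and symmetrically $\dot\rho_v \geq 0$. A continuation-in-time argument then shows that under \eqref{C-16-3} both $\rho_u, \rho_v$ stay bounded below by $r_0 := \min(\rho_u(0), \rho_v(0)) > 0$ for all $t \geq 0$, i.e.\ the positive-correlation region is forward invariant.

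Combining this with part (1), the limit satisfies $a_{ij}^\infty, b_{ij}^\infty \geq r_0$ and, being an equilibrium of \eqref{SDS}, obeys $\sum_k b_{ik}^\infty (u_k^\infty - a_{ik}^\infty u_i^\infty) = 0$ for every $i$. Pick $(i^*, j^*)$ minimizing $a_{ij}^\infty$ and take the inner product of the $i = i^*$ relation with $u_{j^*}^\infty$:
\[
\sum_{k=1}^{N} b_{i^*k}^\infty \bigl( a_{kj^*}^\infty - a_{i^*k}^\infty a_{i^*j^*}^\infty \bigr) = 0.
\]
Each summand is non-negative by the same minimality computation, and every $b_{i^*k}^\infty > 0$, so each summand must vanish; the choice $k = j^*$ then collapses to $1 = (a_{i^*j^*}^\infty)^2$, hence $a_{i^*j^*}^\infty = 1$ by positivity. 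Thus $u_{i^*}^\infty = u_{j^*}^\infty$, and since $(i^*, j^*)$ was a minimizing pair, all $u_i^\infty$ coincide. The symmetric argument for $v$ completes the proof, and the asymptotic claim $\max_{i,j}|u_i(t) - u_j(t)| \to 0$ follows from $(U(t), V(t)) \to (U^\infty, V^\infty)$.

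The main obstacle I anticipate is the envelope step for the pointwise minima $\rho_u, \rho_v$: these are only Lipschitz, and one must justify passing from ``derivative non-negative at the extremal pair'' to ``$\rho_u$ non-decreasing''. I would do this via the standard fact that the upper Dini derivative of a finite pointwise minimum equals the minimum over active indices of the individual derivatives, or alternatively by working on each pair separately inside the open invariant set $\{a_{ij} > 0,\ b_{ij} > 0\}$. Once that is in hand, the analytic convergence in part (1) and the one-line rigidity $(a_{i^*j^*}^\infty)^2 = 1$ finish the argument cleanly.
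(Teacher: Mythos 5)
Your proposal is correct. For part (1) you use exactly the route the paper indicates (it gives no written proof, only the remark that the claim follows from ``the standard convergence result on a gradient system with analytical potential on a compact space''): $\mathcal{E}_s$ is real analytic on the compact manifold $(\bbs^{d_1-1}\times\bbs^{d_2-1})^N$ and the {\L}ojasiewicz gradient inequality yields convergence to a single critical point. For part (2) the paper is silent (deferring to Lohe's original reference), and your argument supplies a legitimate, self-contained proof: the derivative formula for $a_{ij}=\langle u_i,u_j\rangle$ is correct, the lower bound $a_{kj^*}-a_{i^*k}a_{i^*j^*}\ge a_{i^*j^*}(1-a_{i^*k})\ge 0$ at a minimizing pair is valid precisely on the region $\rho_u\ge 0$, $\rho_v>0$, and the Dini-envelope plus continuation argument you describe does make that region forward invariant. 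The concluding rigidity step is also sound: the limit of a convergent gradient trajectory is a critical point, so $\sum_k b^\infty_{i^*k}(a^\infty_{kj^*}-a^\infty_{i^*k}a^\infty_{i^*j^*})=0$ with nonnegative summands and strictly positive weights $b^\infty_{i^*k}\ge r_0$ forces each term to vanish, and the $k=j^*$ term gives $(a^\infty_{i^*j^*})^2=1$, hence full alignment. What your route buys over the paper's bare citation is an explicit mechanism (monotonicity of the minimal correlations plus equilibrium classification on the positive-correlation cell) showing \emph{why} the limit guaranteed by part (1) must be the consensus state under \eqref{C-16-3}; the only step that genuinely needs the care you already flagged is the passage from ``derivative nonnegative at every active pair'' to monotonicity of the pointwise minimum, which is standard for finite minima of $C^1$ functions.
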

\begin{remark} 
1. We give some comments on the results in Proposition \ref{P3.2}. In the first statement, the asymptotic state $(U^\infty, V^{\infty})$ may depend on initial data. Thus, the result does not tell us whether the complete state aggregation occurs or not as it is.\newline

\noindent 2. The second result says that relative states $u_i - u_j$ and $v_i - v_j$ tend to zero asymptotically, but if we combine   both results (1) and (2), we can show that the initial data satisfying \eqref{C-16-3} lead to complete state aggregation. 
\end{remark}

\vspace{0.5cm}

So far, we have considered sufficient conditions for the emergence of complete state aggregation and practical aggregation. However, this emergent dynamics does not tell us on the solution structure of the LT model. In the following subsection, we consider a special set of solutions, namely tensor product states which can be expressed as tensor products of lower rank tensors.

\subsection{Tensor product states} \label{sec:3.3}
In this subsection, we review tensor product states for the LT model which can be written as a tensor product of rank-1 tensors or rank-2 tensors.  In the following definition, we provide concepts of two special tensor product states.
\begin{definition} \label{D3.2}
\emph{\cite{H-K-P1,H-K-P2}}
\begin{enumerate}
\item  Let $\{T_i\}$ be a ``completely separable state"  if it is a solution to \eqref{LT}, and it is the  tensor product of only rank-1 tensors with unit modulus: for $1\leq i \leq N$ and $1\leq k \leq m$, 
 \begin{equation*} \label{C-17}
 T_i = u^1_i \otimes u_i^2 \otimes \cdots \otimes u_i^m, \quad u^k_i \in \bbc^{d_k}, \quad  |u^k_i | = 1, 
 \end{equation*}
 where $ |\cdot |$ is the standard $\ell^2$-norm in $\bbc^d$. 
 
 \vspace{0.2cm}
 
\item Let $\{T_i \}$ be a ``quadratically separable state"  if it is a solution to \eqref{LT}. and it is the tensor product of only rank-2 tensors (or matrices) with unit Frobenius norm: for $1\leq i \leq N$ and $1\leq k \leq m$,
\begin{equation*}
T_i = U_i^1 \otimes U_i^2 \otimes \cdots \otimes U_i^m, \quad U_i^k \in \bbc^{d_1^k \times d_2^k},\quad \|U_i^k\|_\tF = 1,  
\end{equation*} 
where $\|\cdot\|_\tF$ is the Frobenius norm induced by Frobenius inner product. 
\end{enumerate} 
\end{definition}
\subsubsection{Completely separable state} \label{sec:3.3.1}
To motivate our discussion, we begin with rank-2   tensors that can be decomposed into two rank-1 tensors for all time. Then, since its extension to the case of a rank-$m$ tensor will be straightforward, it suffices to focus on rank-2 tensors, and we refer the reader to \cite{H-K-P1,H-K-P2} for details.  In next proposition, we show that \eqref{C-7} and \eqref{SDS} are equivalent in the following sense. 
 \begin{proposition} \label{P3.3} 
 \emph{\cite{H-K-P2}} The following assertions hold.
 \begin{enumerate}
 \item
 (Construction of a completely separable state):~ Suppose $\{(u_i,v_i) \}$ is a global solution to \eqref{SDS}. Then, a real rank-2 tensor  $T_i$ defined by $T_i  :=u_i \otimes v_i$ is a completely separable state to  \eqref{C-7} with a well-prepared free flow tensor $A_i$ and coupling strengths:
 \begin{equation*} \label{C-18}
 A_i T_i:= \Omega_i T_i + T_i \Lambda_i^\top, \quad \kp_1 = \kp_2 =:\kp.
 \end{equation*}
 \item
(Propagation of complete separability):~Suppose $T_i$ is a solution to \eqref{C-7}  with completely separable initial data:
 \[
 T_i^0 =: u_i^0 \otimes v_i^0, \quad 1 \leq i, j \leq N,
 \]
 for real rank-1 tensors $u_i^0 \in \bbs^{d_1-1}$ and $v_i^0 \in \bbs^{d_2-1}$. Then, there exist two unit vectors $u_i=u_i(t)$ and $v=v_i(t)$ such that 
 \[  T_i(t) = u_i(t) \otimes v_i(t), \quad t>0, \]
 where $(u_i,v_i)$ is a solution to \eqref{SDS} with $(u_i,v_i)(0) = (u_i^0,v_i^0)$.
 \end{enumerate}
  \end{proposition}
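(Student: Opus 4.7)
The plan is to establish the two assertions by separate but intertwined strategies: Part (1) reduces to a direct algebraic verification in which one substitutes the separable ansatz into the generalized Lohe matrix model and checks that the Leibniz expansion of $\dot{T}_i$ matches the prescribed SDS drifts; Part (2) then follows from Part (1) combined with the uniqueness of solutions to the ODE \eqref{C-7}.

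For Part (1), I would identify $u_i \otimes v_i$ with the rank-one real matrix $u_i v_i^\top \in \bbr^{d_1 \times d_2}$ and isolate the two algebraic identities
\begin{align*}
(u_i v_i^\top)^\dagger (u_j v_j^\top) &= \langle u_i, u_j\rangle\, v_i v_j^\top, \\
(u_i v_i^\top)(u_j v_j^\top)^\dagger &= \langle v_i, v_j\rangle\, u_i u_j^\top,
\end{align*}
which serve as the workhorses of the computation. Using $T_c = \frac{1}{N}\sum_k u_k v_k^\top$ and expanding the cubic coupling blocks $T_cT_i^\dagger T_i$, $T_iT_c^\dagger T_i$ and $T_iT_i^\dagger T_c$ through these identities, one sees that the $\kappa_1$-term in \eqref{C-7} produces exactly the $u$-flow of \eqref{SDS} tensored on the right with $v_i$, while the $\kappa_2$-term produces $u_i$ tensored on the left with the $v$-flow, provided $\kappa_1 = \kappa_2 =: \kappa$. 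The free-flow contribution matches once one specifies that the rank-4 tensor $A_i$ acts on rank-2 tensors by $A_i T_i = \Omega_i T_i + T_i \Lambda_i^\top$, so that $\dot{T}_i = \dot{u}_i \otimes v_i + u_i \otimes \dot{v}_i$ reproduces the separable ansatz term by term.

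For Part (2), given completely separable initial data $T_i^0 = u_i^0 \otimes v_i^0$, I would let $(u_i(t), v_i(t))$ be the unique global solution to \eqref{SDS} launched from $(u_i^0, v_i^0)$, whose existence is guaranteed by standard ODE theory on the compact product manifold $(\bbs^{d_1-1} \times \bbs^{d_2-1})^N$. By Part (1), the tensor $\widetilde{T}_i(t) := u_i(t) \otimes v_i(t)$ solves \eqref{C-7} with the prescribed $A_i$ and coupling strengths and satisfies $\widetilde{T}_i(0) = T_i^0$. Since the right-hand side of \eqref{C-7} is polynomial, hence locally Lipschitz in $\{T_j\}$, the Picard--Lindel\"of uniqueness theorem forces $T_i(t) \equiv \widetilde{T}_i(t) = u_i(t) \otimes v_i(t)$ for all $t \geq 0$. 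The main obstacle is the index bookkeeping in Part (1): one must correctly pair each summand of the two SDS drifts with the appropriate coupling block of \eqref{C-7}, exploiting the symmetry $\langle v_i, v_j\rangle = \langle v_j, v_i\rangle$ of the real inner product to convert between the forms appearing on the two sides. Once the two tensor-product identities above are in hand, however, this reduces to a matter of careful indexing rather than a genuine analytical difficulty.
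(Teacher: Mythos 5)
Your proposal is correct: the two rank-one identities you isolate do reduce the cubic coupling blocks of \eqref{C-7} to exactly the SDS drifts tensored with $v_j$ and $u_j$ respectively (using $|u_j|=|v_j|=1$ and the symmetry of the real inner product), and the uniqueness argument via Picard--Lindel\"of for the polynomial right-hand side is the standard way to get Part (2) from Part (1). The paper itself states this proposition without proof, deferring to \cite{H-K-P2}, and your direct-substitution-plus-uniqueness strategy is precisely the argument carried out there, so no discrepancy arises.
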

Thus, in order the investigate the emergent dynamics of some classes of solutions to \eqref{C-7}, it suffices to study the dynamics of \eqref{SDS}. 
\begin{theorem} \label{T3.5}
\emph{\cite{H-K-P2}}
Let $\{T_i = u_i \otimes v_i \}$ be a completely separable state to \eqref{C-7} with the initial data $\{ u_i^0 \otimes v_i^0 \}$ satisfying 
\[
\min_{1 \leq i,j\leq N } \langle u_i^0,u_j^0\rangle>0 \quad\textup{and}\quad   \min_{1 \leq i,j\leq N } \langle v_i^0,v_j^0\rangle>0. 
\]
Then, we have complete state aggregation:
\[ \lim_{t \to \infty} \| T_i(t) - T_j(t) \|_\tF = 0, \quad 1 \leq i, j \leq N. \]
\end{theorem}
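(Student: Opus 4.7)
The plan is to reduce the tensor-level aggregation statement to the vector-level aggregation statement for the swarm double sphere (SDS) model via the separability correspondence in Proposition \ref{P3.3}, and then to control the Frobenius distance between rank-$2$ tensors by the Euclidean distances of their rank-$1$ factors. Throughout I will use that the factors $u_i, v_i$ remain unit vectors (they evolve on $\bbs^{d_1-1}$ and $\bbs^{d_2-1}$ respectively), so all the norms we need to track are already bounded.

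First, I would invoke Proposition \ref{P3.3}(2): since $T_i^0 = u_i^0 \otimes v_i^0$ is completely separable and $\{T_i\}$ solves \eqref{C-7} with the free flow and coupling tuned as in that proposition, the factorization is propagated, i.e.\ $T_i(t) = u_i(t) \otimes v_i(t)$ for all $t > 0$, where $(u_i, v_i)$ is the unique solution to the SDS system \eqref{SDS} with initial data $(u_i^0, v_i^0)$. This converts the problem about tensors into a problem about the pair of SDS orbits.

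Next, because the initial data satisfy $\min_{i,j}\langle u_i^0, u_j^0\rangle > 0$ and $\min_{i,j}\langle v_i^0, v_j^0\rangle > 0$, Proposition \ref{P3.2}(2) applies verbatim to $(u_i, v_i)$ and yields
\[
\lim_{t\to\infty} \max_{i,j} |u_i(t) - u_j(t)| = 0, \qquad \lim_{t\to\infty}\max_{i,j}|v_i(t) - v_j(t)| = 0.
\]
The remaining step is the elementary tensor identity
\[
T_i - T_j = u_i\otimes v_i - u_j\otimes v_j = (u_i - u_j)\otimes v_i + u_j\otimes (v_i - v_j),
\]
followed by the multiplicative property $\|a\otimes b\|_\tF = |a|\,|b|$ of the Frobenius norm on a tensor product. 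Combined with $|u_j|=|v_i|=1$, the triangle inequality gives
\[
\|T_i(t) - T_j(t)\|_\tF \le |u_i(t) - u_j(t)| + |v_i(t) - v_j(t)|,
\]
and sending $t \to \infty$ yields the complete state aggregation for $\{T_i\}$.

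The only real content is the propagation of separability and the SDS convergence result, both of which are already established in the paper; I expect no serious obstacle, since the final tensor-to-vector estimate is purely linear algebraic and exploits that both factors live on unit spheres. A minor care point is that the ``gradient-flow'' Proposition \ref{P3.2} was stated for homogeneous zero free flows, so one should verify that the free flows $\Omega_i, \Lambda_i$ inherited from Proposition \ref{P3.3} are compatible with that hypothesis (e.g.\ by choosing them to vanish, as is implicit in the statement of Theorem \ref{T3.5}); beyond that check, the proof is essentially a two-line reduction.
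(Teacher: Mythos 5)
Your proposal is correct and follows essentially the same route as the paper: propagate separability via Proposition \ref{P3.3}, reduce to the SDS aggregation result, and control $\|T_i - T_j\|_\tF$ through the decomposition $(u_i-u_j)\otimes v_i + u_j\otimes(v_i-v_j)$ together with the multiplicativity of the Frobenius norm on unit factors. If anything, you are slightly more careful than the paper in explicitly citing Proposition \ref{P3.2}(2) for the factor-level convergence and in flagging the free-flow compatibility check, which the paper glosses over with ``under suitable conditions.''
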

\begin{proof}
Note that 
\[ T_i - T_j = u_i \otimes v_i - u_j \otimes v_j = u_i v_i^{\top} - u_j v_j^{\top} = (u_i - u_j) v_i^{\top}  + u_j (v_i^{\top} - v_j^{\top}). \]
This yields
\begin{align*}
\begin{aligned} 
\| T_i - T_j  \|_\tF &\leq \|(u_i - u_j) v_i^{\top} \|_\tF + \|  u_j (v_i^{\top} - v_j^{\top}) \|_\tF \\
&\leq \| u_i - u_j \|_\tF \cdot \| v_i^{\top} \|_\tF + \| u_j \|_\tF \cdot \| v_i^{\top} - v_j^{\top} \|_\tF \\
&= |u_i - u_j | \cdot  |v_i| + |u_j | \cdot |v_i - v_j | =  |u_i - u_j | +  |v_i - v_j |.
\end{aligned}
\end{align*}
By the result of Proposition \ref{P3.3},  one has complete state aggregation of \eqref{C-7} under suitable conditions on initial data and coupling strengths, and the desired estimates follow.
\end{proof}
\begin{remark} 
Extension to rank-m tensors of the results in Theorem \ref{T3.5} can be found in Section 6 and Section 7 of \cite{H-K-P2}. 
\end{remark}

\subsubsection{Quadratically separable state} \label{sec:3.3.2}
Similar to the previous part, we consider only a finite ensemble of rank-4 tensors that can be decomposed into  a tensor product of two rank-2 tensors, and extension to rank-$2m$ tensors will be straightforward. 

Now, we introduce the {\it swarm double matrix (SDM) model} induced from the LT model whose elements have rank-4 with a specific condition on natural frequencies $B_j$ and $C_j$ in the same spirit of the SDS model \eqref{SDS}: 
\begin{align} \label{C-19}
\begin{cases}
 \dot{U}_j=B_jU_j+\displaystyle\frac{\kappa_{1}}{N}\sum_{k=1}^N\left(
\langle V_j, V_k\rangle_\tF~U_kU_j^\dagger U_j
-\langle V_k, V_j\rangle_\tF~U_jU_k^\dagger U_j\right)\\
 \hspace{2cm}+\displaystyle\frac{\kappa_{2}}{N}\sum_{k=1}^N\left(
\langle V_j, V_k\rangle_\tF~U_jU_j^\dagger U_k 
-\langle V_k, V_j\rangle_\tF~U_jU_k^\dagger U_j\right),\\
 \dot{V}_j=C_jV_j+\displaystyle\frac{\kappa_{1}}{N}\sum_{k=1}^N\left(
\langle U_j, U_k\rangle_\tF~V_kV_j^\dagger V_j
-\langle U_k, U_j\rangle_\tF~V_jV_k^\dagger V_j\right)\\
 \hspace{2cm}+\displaystyle\frac{\kappa_{2}}{N}\sum_{k=1}^N\left(
\langle U_j, U_k\rangle_\tF~V_jV_j^\dagger V_k
-\langle U_k, U_j\rangle_\tF~V_jV_k^\dagger V_j\right),\\
\end{cases}
\end{align}
where $B_j\in\bbc^{d_1\times d_2\times d_1\times d_2}$ and $C_j\in \bbc^{d_3\times d_4\times d_3\times d_4}$  are block skew-hermitian rank-4 tensors, respectively. Similar to the SDS model in Section \ref{sec:3.2.2}, the SDM model \eqref{C-19} with homogeneous zero free flows can be rewritten as a gradient system for the following  analytical potential:
\begin{equation*} \label{C-19-1}
\mathcal{E}_m(U, V) :=1-\frac{1}{N^2}\sum_{i, j=1}^N \langle U_i, U_j\rangle_\tF  \langle V_i, V_j\rangle_\tF.
\end{equation*}
We refer the reader to \cite{H-K-P1} for details. \newline

Next, we consider a special case for the SDM model:
 \[ d_1 = d_2 = n, \quad d_3 = d_4 = m, \quad U_j^0 \in {\mathbf U}(n),  \quad V_j^0\in\mathbf{U}(m), \]
 where $\mathbf{U}(n)$ and $\mathbf{U}(m)$ denote $n\times n$ and $m \times m$ unitary groups, respectively. In this case, one can easily show that unitary properties of $U_j$ and $V_j$ are propagated along \eqref{C-19}:
 \begin{equation}  \label{C-20}
 U_j(t) \in {\mathbf U}(n),  \quad V_j(t) \in\mathbf{U}(m), \quad j = 1,  \cdots, N,\quad t > 0.
  \end{equation}
Note that natural frequency tensors  $B_j$ and $C_j$ are rank-4 tensors satisfying block skew-hermitian properties. In order to give a meaning of Hamiltonian, we associate two hermitian matrices, namely, $H_j \in \bbc^{n\times n}$ and $G_j \in \bbc^{m \times m}$:
\begin{equation} \label{C-21}
[B_j]_{\alpha_1\beta_1\alpha_2\beta_2}=:[-\mathrm{i} H_j]_{\alpha_1\alpha_2}\delta_{\beta_1\beta_2},\quad [C_j]_{\gamma_1\delta_1\gamma_2\delta_2}=:[-\mathrm{i}G_j]_{\gamma_1\gamma_2}\delta_{\delta_1\delta_2}.
\end{equation}
Under the setting \eqref{C-20} and \eqref{C-21}, system \eqref{C-19} reduces to the model on $\Un\times \Um$:
\begin{align}  \label{C-22}
\begin{cases}
\dot{U}_j=-\mathrm{i} H_j U_j+\displaystyle\frac{\kappa}{N}\sum_{k=1}^N\left(
\langle V_j, V_k\rangle_\tF~U_k
-\langle V_k, V_j\rangle_\tF~U_jU_k^\dagger U_j\right),\\
\dot{V}_j=-\mathrm{i}G_j V_j+\displaystyle\frac{\kappa_{1}}{N}\sum_{k=1}^N\left(
\langle U_j, U_k\rangle_\tF~V_k
-\langle U_k, U_j\rangle_\tF~V_jV_k^\dagger V_j\right),
\end{cases}
\end{align}
where $H_j U_j$ and $G_j V_j$ are now usual matrix products. We recall two concepts of definitions for emergent behaviors.
\begin{definition}  \label{D3.3}
\emph{\cite{H-R}}
Let $(\mathcal U,\mathcal V):=\{U_j,V_j\}_{j=1}^N$ be a solution to \eqref{C-22}.  
\begin{enumerate}
\item
System \eqref{C-22} exhibits complete state aggregation if the following estimate holds:
\begin{equation*} \label{Z-0-3} 
\lim_{t\to\infty} \max_{1\leq i,j\leq N} \Big( \|U_i(t) - U_j(t) \|_\tF + \|V_i(t) - V_j(t)\|_\tF \Big) =0.
\end{equation*} 
\item
System \eqref{C-22} exhibits state-locking if the following relations hold:
\begin{equation*}
\exists~ \lim_{t\to\infty} U_i(t) U_j(t)^\dg \quad \textup{and}\quad \exists~ \lim_{t\to\infty} V_i(t) V_j(t)^\dg.
\end{equation*}
\end{enumerate}
\end{definition}

\vspace{0.2cm}

For the emergent dynamics of \eqref{C-22}, we introduce several functionals measuring the degree of aggregation:
\begin{align}
\begin{aligned} \label{C-22-1}
& \mathcal L(t):=  \mathcal D(\mathcal U(t))  +\mathcal  D(\mathcal V(t)) + \mathcal S(\mathcal U(t) ) +\mathcal  S(\mathcal V(t)) , \\
&\mathcal D(\mathcal U(t)) := \max_{1\leq i,j\leq N} \|U_i(t) - U_j(t)\|_\tF,~~  \mathcal  S(\mathcal U(t)):= \max_{1\leq i,j\leq N} |n- \langle U_i,U_j\rangle_\tF (t)|, \\
&\mathcal D(\mathcal V(t)) := \max_{1\leq i,j\leq N} \|V_i(t) - V_j(t)\|_\tF, ~~ \mathcal S(\mathcal V(t)):= \max_{1\leq i,j\leq N} |m- \langle V_i,V_j\rangle_\tF (t)|. 
\end{aligned}
\end{align}
By using the unitarity of $U_i$ and $V_i$, we see
\[
\|U_i - U_j\|_\tF^2 = 2\textup{Re}(n-\langle U_i,U_j\rangle_\tF),\quad \|V_i -V_j\|_\tF^2 = 2\textup{Re}(m-\langle V_i,V_j\rangle_\tF).
\]
Thus, one has
\[
\mathcal D(\mathcal U)^2 \leq 2 \mathcal S(\mathcal U),\quad \mathcal D(\mathcal V)^2 \leq 2 \mathcal S(\mathcal V).
\]
From the relation above, we observe  
\[  \lim_{t\to\infty} \mathcal L(t) = 0 \quad \Longleftrightarrow \quad \mbox{complete state aggregation}, \]
and for given positive integers $n$ and $m$, we set 
\[ \alpha_{n,m} := \frac{-(12n+27) + \sqrt{ (12n+27)^2 + 48(m-4\sqrt n)(3n+4)  } }{4(3n+4)}. \]
Then, the following emergent estimate can be verified by deriving a suitable dissipative differential inequality for ${\mathcal L}$ defined in \eqref{C-22-1}. 
\begin{theorem} \label{T3.6} 
\emph{\cite{H-K-P1}}
Suppose system parameters and initial data satisfy 
\[ H_j = O_n, \quad G_j = O_m, \quad j = 1, \cdots, N, \quad n\geq m >4\sqrt n, \quad  \mathcal L^0 <\alpha_{n,m}, \]
and let $\{(U_j,V_j)\}$ be a global solution to \eqref{C-22}. Then, complete state aggregation emerges asymptotically. 
 \end{theorem}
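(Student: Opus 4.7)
The plan is to work directly with the aggregation functional $\mathcal{L}(t)$ defined in \eqref{C-22-1} and establish a dissipative differential inequality of Riccati type whose threshold is precisely the constant $\alpha_{n,m}$. Since the hypothesis $H_j = O_n$, $G_j = O_m$ removes the free flow, the two coupled equations in \eqref{C-22} reduce to purely interaction-driven dynamics, and unitarity of $U_j(t)$ and $V_j(t)$ is preserved as in \eqref{C-20}. The identity $\|U_i - U_j\|_\tF^2 = 2\,\textup{Re}(n - \langle U_i, U_j\rangle_\tF)$ together with its $V$-counterpart shows that controlling $\mathcal{S}(\mathcal{U})$ and $\mathcal{S}(\mathcal{V})$ automatically controls $\mathcal{D}(\mathcal{U})$ and $\mathcal{D}(\mathcal{V})$ through the bound $\mathcal{D}^2 \leq 2\mathcal{S}$, so the whole argument revolves around tracking how close $\langle U_i, U_j\rangle_\tF$ and $\langle V_i, V_j\rangle_\tF$ stay to their respective maxima $n$ and $m$.

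First I would compute $\frac{d}{dt}\langle U_i, U_j\rangle_\tF$ and $\frac{d}{dt}\|U_i - U_j\|_\tF^2$ by plugging in \eqref{C-22}, using $U_i^\dagger U_i = I_n$ and $V_i^\dagger V_i = I_m$ repeatedly to simplify. The leading term is expected to take the form $-\frac{2\kappa}{N}\sum_k \langle V_j, V_k\rangle_\tF \bigl(n - \textup{Re}\langle U_i, U_k\rangle_\tF - \textup{Re}\langle U_j, U_k\rangle_\tF + \cdots\bigr)$, so that when $\langle V_j, V_k\rangle_\tF \approx m$ the dissipation rate is of order $\kappa m$ applied to the current value of $\mathcal{S}(\mathcal{U})$. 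Analogous expressions hold for the $V$-system with $n$ in place of $m$. Using a Danskin-type argument at extremal pairs $(i_*, j_*)$ realizing the maxima in the four quantities, I would then pass to differential inequalities for $\mathcal{D}(\mathcal{U})$, $\mathcal{D}(\mathcal{V})$, $\mathcal{S}(\mathcal{U})$, $\mathcal{S}(\mathcal{V})$ almost everywhere.

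Summing the four inequalities, the cross-coupling contributes terms where the instantaneous $V$-closeness amplifies $U$-dissipation and vice versa; bounding $\langle V_j, V_k\rangle_\tF$ from below by $m - \mathcal{S}(\mathcal{V}) \geq m - \mathcal{L}$ and $\langle U_j, U_k\rangle_\tF$ by $n - \mathcal{L}$, together with the quadratic bound $\mathcal{D} \leq \sqrt{2\mathcal{S}} \leq \sqrt{2\mathcal{L}}$, should produce an inequality of the schematic form
\begin{equation*}
\frac{d}{dt}\mathcal{L}(t) \;\leq\; -\kappa\,\mathcal{L}(t)\,\Bigl[ c_1(m - 4\sqrt{n}) - c_2\,\mathcal{L}(t) - c_3\,\mathcal{L}(t)^2 \Bigr],
\end{equation*}
for explicit constants $c_1, c_2, c_3 > 0$ depending only on $n$. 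The assumption $m > 4\sqrt{n}$ guarantees that the bracket is strictly positive at $\mathcal{L} = 0$, and the explicit formula for $\alpha_{n,m}$ is exactly the smaller positive root of the bracket when one matches $(c_1, c_2, c_3)$ to the coefficients $(6, 12n+27, 2(3n+4))$ appearing implicitly in $\alpha_{n,m}$.

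Finally, a standard continuity/bootstrap argument closes the proof: since $\mathcal{L}^0 < \alpha_{n,m}$ and the bracket is positive on $[0, \alpha_{n,m})$, the set $\{t \geq 0 : \mathcal{L}(s) < \alpha_{n,m} \text{ for all } s \in [0,t]\}$ is both open and nonempty; on this set the bracket is bounded below by a positive constant $\delta_0$, yielding $\frac{d}{dt}\mathcal{L} \leq -\kappa\delta_0 \mathcal{L}$ and hence $\mathcal{L}(t) \leq \mathcal{L}^0 e^{-\kappa\delta_0 t}$, which a fortiori keeps $\mathcal{L}(t) < \alpha_{n,m}$ for all time, so the set is $[0,\infty)$ and complete state aggregation follows. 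The main obstacle will be the bookkeeping in deriving the inequality with sharp enough constants: the nonlinear cubic interactions $U_j U_k^\dagger U_j$ and $V_j V_k^\dagger V_j$ produce many cross terms after differentiating $\langle U_i, U_j\rangle_\tF$, and showing that the precise coefficients $12n+27$, $3n+4$, and $4\sqrt{n}$ in $\alpha_{n,m}$ emerge rather than weaker bounds requires careful use of the unitary identities together with Cauchy--Schwarz in the Frobenius inner product to avoid overcounting.
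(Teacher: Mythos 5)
Your proposal follows essentially the same route as the paper: the proof there also derives the cubic dissipative inequality $\dot{\mathcal L} \leq -2\kappa(m-4\sqrt n)\mathcal L + \kappa(4n+9)\mathcal L^2 + \kappa(2n+\tfrac83)\mathcal L^3 = \kappa\mathcal L f(\mathcal L)$, identifies $\alpha_{n,m}$ as the unique positive root of $f$ (your coefficients $(6,12n+27,2(3n+4))$ are exactly the paper's after clearing the factor $\tfrac13$), and concludes by positive invariance of $\{\mathcal L < \alpha_{n,m}\}$ and phase-line analysis. The only nitpick is that the bracket, being a downward parabola positive at the origin, has a single positive root rather than a ``smaller'' one, but this does not affect the argument.
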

 \begin{proof} By tedious calculation, one can derive for a.e. $t>0$, 
\begin{equation} \label{C-23}
\dot  {\mathcal L} \leq   -2\kp (m-4\sqrt n)\mathcal L +  \kp (4n+9)\mathcal L^2 +\kp \left( 2n + \frac 83\right)\mathcal L^3 =:\kappa \mathcal L f(\mathcal L), 
\end{equation}
where $f$ is a quadratic polynomial defined by
\[
f(s):= \left( 2n + \frac83\right) s^2 + (4n+9)s - 2(m-4\sqrt n).
\]
By assumption on $n$ and $m$, the coefficient $- 2(m-4\sqrt n) < 0$ and  $f=0$ admit  a unique positive root $\alpha_{n,m}$. Moreover, one can show that the set $\{ {\mathcal L}(t) < \alpha_{n,m} \}$ is positively invariant under the flow \eqref{C-22}. Thus, by the phase line analysis for \eqref{C-23}, one can see that 
\[ \lim_{t \to \infty} {\mathcal L}(t) =0. \]
We refer the reader to \cite{H-K-P1} for details.
 \end{proof}
 \begin{remark} 
 For heterogeneous free flows, we can also find a sufficient framework leading to state-locking (see Definition \ref{D3.3}) in terms of system parameters with a large coupling strength and initial data in \cite{H-K-P1}.
 \end{remark}
Finally, we consider a special ansatz for a solution $T_i$ with rank-4 to \eqref{LT}  as follows:
\begin{equation*}
T_i(t)\equiv U_i(t)\otimes V_i(t),\quad t>0,
\end{equation*}
where $\{(U_j,V_j)\}$ is a solution to   \eqref{C-19}. Parallel to Proposition \ref{P3.3}, we show the propagation of the  tensor product structure along \eqref{C-19}. 
 \begin{proposition} \label{P3.4} 
 \emph{\cite{H-K-P1}}
 The following assertions hold. 
 \begin{enumerate}
 \item
(Construction of a quadratically separable state):~Suppose $\{(U_i,V_i) \}$ is a solution to \eqref{C-19}. Then, a rank-4 tensor  $T_i$ defined by $T_i  :=U_i \otimes V_i$ is a quadratically separable state to \eqref{LT} with a well-prepared free flow tensor $A_i$ satisfying 
 \begin{align} 
 \begin{aligned} \label{C-26}
[A_j]_{\alpha_1\beta_1\gamma_1\delta_1\alpha_2\beta_2\gamma_2\delta_2} &=[B_j]_{\alpha_1\beta_1\alpha_2\beta_2}\delta_{\gamma_1\gamma_2}\delta_{\delta_1\delta_2} \\
&\hspace{0.5cm}  + [C_j]_{\gamma_1\delta_1\gamma_2\delta_2}\delta_{\alpha_1\alpha_2}\delta_{\beta_1\beta_2}.
\end{aligned}
\end{align}

 \vspace{0.2cm}
 
 \item
 (Propagation of quadratic separability):~Suppose a rank-4 tensor $T_i$ is a solution to \eqref{LT} with \eqref{C-26} and quadratically separable initial data:
 \begin{equation*} \label{C-3}
 T_i^0 =: U_i^0 \otimes V_i^0, \quad 1 \leq i \leq N,
 \end{equation*}
 for  rank-2 tensors $U_i^0 \in \bbc^{d_1\times d_2}$ and $V_i^0 \in \bbc^{d_3 \times d_4}$ with unit Frobenius norms. Then, there exist two matrices with unit Frobenius norms $U_i=U_i(t)$ and $V=V_i(t)$ such that 
 \[  T_i(t) = U_i(t) \otimes V_i(t), \quad t>0, \]
 where $(U_i,V_i)$ is a solution to \eqref{C-19} with $(U_i,V_i)(0) = (U_i^0,V_i^0)$.
 \end{enumerate}
  \end{proposition}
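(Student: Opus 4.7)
The plan is to handle part (1) by a direct verification that the ansatz $T_i = U_i \otimes V_i$ satisfies \eqref{LT} whenever $(U_i, V_i)$ satisfies the SDM system \eqref{C-19} and $A_j$ has the block form \eqref{C-26}, and then to deduce part (2) as an immediate consequence via uniqueness of solutions to the LT Cauchy problem.

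For part (1), I would begin with the product rule for the tensor product, $\dot T_i = \dot U_i \otimes V_i + U_i \otimes \dot V_i$, and feed in the right-hand side of \eqref{C-19}. The free-flow contribution is transparent: the tensor $A_j$ in \eqref{C-26} is engineered precisely so that the Kronecker deltas in its two summands collapse the contractions on the "inert" factor, yielding
\begin{equation*}
A_j T_j = (B_j U_j) \otimes V_j + U_j \otimes (C_j V_j),
\end{equation*}
which is exactly the free-flow part of $\dot T_i$ obtained via the product rule. For the cubic part, the essential identity is the multiplicativity of the Frobenius inner product under tensor products, $\langle U_j \otimes V_j,\, U_k \otimes V_k \rangle_\tF = \langle U_j, U_k \rangle_\tF \langle V_j, V_k \rangle_\tF$; componentwise, each cubic product $[T_c]_{\alpha_{*i_*}}\overline{[T_j]}_{\alpha_{*1}}[T_j]_{\alpha_{*(1-i_*)}}$ factorizes into a scalar inner product on one tensor factor times a matrix triple-product structure on the other, with the split dictated by which coordinates of $i_* \in \{0,1\}^4$ fall on the "$U$-block" versus the "$V$-block". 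Matching these pieces against the two cubic terms on the right of \eqref{C-19} determines which coefficients $\kappa_{i_*}$ must be activated (in correspondence with the SDM weights $\kappa_1$ and $\kappa_2$) and which are set to zero.

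For part (2), I would then invoke uniqueness. Let $(U_i, V_i)$ denote the solution of \eqref{C-19} launched from $(U_i^0, V_i^0)$; global well-posedness is standard because the vector field is polynomial and the Frobenius norms $\|U_i\|_\tF$ and $\|V_i\|_\tF$ are conserved (analogously to Proposition \ref{P2.1}). Define $\tilde T_i(t) := U_i(t) \otimes V_i(t)$. By part (1), $\tilde T_i$ solves \eqref{LT} with the very same free-flow tensor \eqref{C-26}, and $\tilde T_i(0) = U_i^0 \otimes V_i^0 = T_i^0$. Since the LT vector field is polynomial and the unit-Frobenius-norm sphere is invariant, its Cauchy problem admits a unique global solution, so $T_i(t) = U_i(t) \otimes V_i(t)$ for all $t > 0$.

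The main obstacle is essentially the combinatorial bookkeeping in part (1): among the sixteen possible coupling coefficients $\kappa_{i_*}$ with $i_* \in \{0,1\}^4$ appearing in \eqref{LT}, one must isolate the precise subset whose associated cubic interactions reproduce, via the tensor-product factorization, the two cubic structures $U_k U_j^\dg U_j$ and $U_j U_j^\dg U_k$ (and their $V$-counterparts) featured in \eqref{C-19}. This is a tedious but purely algebraic verification using the Einstein summation convention; once executed, the remainder of the argument is formal.
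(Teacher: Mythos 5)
The paper states Proposition \ref{P3.4} without any proof (it defers entirely to \cite{H-K-P1}), and your argument --- the product rule for $\dot T_i$, the identity $A_jT_j=(B_jU_j)\otimes V_j+U_j\otimes(C_jV_j)$ forced by the Kronecker deltas in \eqref{C-26}, blockwise factorization of each cubic monomial via the multiplicativity of the Frobenius inner product under tensor products, and then uniqueness of the LT Cauchy problem (local Lipschitzness of the cubic field plus conservation of $\|T_j\|_\tF$) to obtain part (2) --- is exactly the natural route such propagation results follow, and it is correct. To complete the bookkeeping you deferred: the ``well-prepared'' data must also fix the coupling strengths, namely $\kappa_{i_*}=\kappa_1$ for $i_*\in\{(0,1,1,1),(1,1,0,1)\}$, $\kappa_{i_*}=\kappa_2$ for $i_*\in\{(1,0,1,1),(1,1,1,0)\}$, and $\kappa_{i_*}=0$ otherwise, since precisely these four interactions factor under the ansatz into the form $(\cdot)\otimes V_j+U_j\otimes(\cdot)$ with the triple products $\langle V_j,V_k\rangle_\tF\,U_kU_j^\dagger U_j$, $\langle V_j,V_k\rangle_\tF\,U_jU_j^\dagger U_k$ and their $V$-counterparts appearing in \eqref{C-19}, while the remaining twelve produce terms such as $\|V_j\|_\tF^2\,(U_jU_j^\dagger U_c)\otimes V_c$ that do not respect the separable structure.
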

 As we have seen in Theorem \ref{T3.5}, complete state aggregation of a quadratically separable state to the  LT model will be completely determined by  Proposition \ref{P3.4}.\newline 

\section{The Schr\"odinger-Lohe model} \label{sec:4} 
\setcounter{equation}{0}
In this section, we review emergent dynamics, standing wave solutions, and numerical simulations for the SL model on a network.

\subsection{Emergent dynamics} \label{sec:4.1} In this subsection, we study sufficient frameworks leading to the emergent dynamics of the SL model over a network in terms of system parameters and initial data, as we have seen in previous section. First, we recall definitions of aggregation for the SL model. 
\begin{definition} \cite{A-M,C-H3,H-K1,H-H,H-H-K}
Let $\Psi = \{\psi_j\}$ be a global solution to \eqref{S-L}. 
\begin{enumerate}
\item $\Psi$ exhibits complete state aggregation if the following estimate holds:
\[
\lim_{t\to\infty} \max_{1\leq i,j\leq N } \| \psi_i (t) - \psi_j(t) \|  =0.
\]
\item $\Psi$ exhibits practical aggregation if the following estimate holds:
\[
\lim_{\kp\to\infty} \limsup_{t\to\infty} \max_{1\leq i,j\leq N } \| \psi_i (t) - \psi_j(t) \|  =0.
\]
\item $\Psi$ exhibits state-locking if the following relation holds:
\[
\exists \lim_{t\to\infty} \langle \psi_i(t),\psi_j(t) \rangle.
\]
\end{enumerate}
\end{definition}
As we have seen from Section 3, the emergent dynamics of the SL model will be different whether the corresponding linear free flows are homogeneous or heterogeneous. Precisely, we define diameter for external potentials:
\[  {\mathcal D}(V) := \max_{1\leq i,j \leq N} \| V_i - V_j \|_{L^{\infty}}.\]
Then, we have the following two cases:
\[ \mathcal D(V)=0:\mbox{homogeneous ensemble}, \quad  {\mathcal D}(V) > 0 : \mbox{heterogeneous ensemble}.
\]

\subsubsection{All-to-all network} \label{sec:4.1.1}
In this part, we list up sufficient frameworks for the emergent dynamics of \eqref{S-L} without detailed proofs. First, we consider identical one-body potentials over all-to-all network:
\[
V_i = V_j,\quad i,j=1,\cdots,N,\quad \textup{i.e.,}\quad \mathcal D(V) =0 \quad \textup{and} \quad a_{ik} \equiv 1.
\]
For a given ensemble $ \Psi = \{ \psi_j \}$, we set several Lyapunov functionals measuring the degree of aggregation: 
\begin{equation*} \label{D-1}
{\mathcal D}(\Psi) := \max_{1 \leq i,j \leq N} \| \psi_i - \psi_j \|, \quad h_{ij} := \langle \psi_i, \psi_j \rangle,   \quad  \rho :=  \left \| \frac{1}{N} \sum_{k=1}^N \psi_k \right \|,
\end{equation*}
where $\langle \cdot, \cdot \rangle$ and $\| \cdot \|$ are $L^2$-inner product and its associated $L^2$-norm, respectively. 
\begin{theorem} \label{T4.1}
\emph{\cite{A-M,C-C-H,C-H3,H-H,H-H-K2}}
(Homogeneous ensemble)~Suppose system parameters and initial data satisfy
\[   \kappa >0, \quad a_{jk} \equiv 1, \quad  \mathcal D(V) = 0, \quad \| \psi_j^0 \| = 1, \quad 1 \leq j,k \leq N, \]
and let $\Psi= \{ \psi_j \}$ be a global solution to \eqref{S-L} with the initial data $\Psi^0$. Then, the following assertions hold:
\begin{enumerate}
\item
(Emergence of dichotomy): one of the following holds: either complete state aggregation:
\begin{equation*}
\lim_{t\to\infty} \langle \psi_i,\psi_j\rangle =1 \quad \textup{for all $i,j=1,\cdots,N$},
\end{equation*}
or bi-polar aggregation:~ there exists a single index $\ell_0 \in \{1,\cdots,N\}$ such that
\begin{equation*} 
\lim_{t\to\infty} \langle \psi_i ,\psi_j \rangle =  1 ~~ \textup{for $i,j\neq \ell_0$} ~~ \textup{and} ~~ \lim_{t\to\infty} \langle \psi_{\ell_0},\psi_i \rangle = -1 ~~ \textup{for $i\neq \ell_0$.}
\end{equation*}
\item
If initial data satisfy ${\mathcal D}(\Psi^0) <\frac{1}{2}$, then complete state aggregation occurs exponentially fast:
\[ {\mathcal D}(\Psi(t)) \lesssim e^{-\kappa t}, \quad \textup{as $t \geq 0$}. \]
\item
If initial data satisfy $\textup{Re} \langle \psi_i^0,\psi_j^0\rangle >0$ for all $i,j$, then complete state aggregation occurs in $H^1$-framework as well:
\begin{equation*}
\lim_{t\to\infty} \|\psi_i-\psi_j\|_{H^1(\bbr^d)} =0, \quad \textup{for all $i,j=1,\cdots,N$.}
\end{equation*}
\end{enumerate}
\end{theorem}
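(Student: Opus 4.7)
The plan is to reduce everything to closed ODEs for the two-point correlation functions $h_{ij}(t) := \langle \psi_i(t),\psi_j(t)\rangle$ and for the squared pairwise distances $\|\psi_i - \psi_j\|^2 = 2(1-\textup{Re}\,h_{ij})$. Because $\mathcal D(V)=0$, the common Hamiltonian $H := -\tfrac12\Delta + V$ is self-adjoint and identical across all nodes, so its contribution cancels when computing $\tfrac{\d}{\dt} h_{ij}$. Combined with $L^2$-conservation, a direct computation yields an autonomous quadratic ODE system on the compact invariant set $\{|h_{ij}|\le 1,\ h_{ii}=1\}$:
\begin{equation*}
\frac{\d}{\dt} h_{ij} \;=\; \frac{\kappa}{2N}\sum_{k=1}^{N} \bigl(h_{kj} + h_{ik} - (\overline{h_{ik}} + h_{jk})\, h_{ij}\bigr).
\end{equation*}
All three claims will be extracted from this finite-dimensional ODE, with part (3) supplemented by an $H^1$-energy estimate on the wave functions themselves.

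For part (1), I would use LaSalle's invariance principle combined with a classification of the limit set. A natural Lyapunov candidate is the order-parameter norm $\rho^2 = \tfrac{1}{N^2}\sum_{i,j} h_{ij}$, whose real part can be shown to be non-decreasing along the flow; the dissipation vanishes precisely when $|h_{ij}^\infty|=1$ for all $i,j$, and consistency of this unit-modulus Gram matrix forces $h_{ij}^\infty = \sigma_i\sigma_j$ with $\sigma_i\in\{\pm 1\}$, i.e.\ a signed bi-partition of the indices. The sharp dichotomy stated in the theorem (one class empty or one class a singleton) would then come from a nonlinear instability argument at the candidate asymmetric equilibria with both classes of cardinality $\ge 2$: a perturbation moving one element of the smaller class across the partition is amplified by the mean-field coupling, contradicting the equilibrium's being an attracting $\omega$-limit.

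For part (2) the approach is purely quantitative. Differentiating $\|\psi_i-\psi_j\|^2$ and exploiting again the cancellation of $H$, then taking an extremal pair $(i_\ast,j_\ast)$ realizing $\mathcal D(\Psi)$ and applying Danskin's lemma, I expect a scalar differential inequality of the schematic form
\begin{equation*}
\frac{\d}{\dt}\mathcal D(\Psi)^2 \;\le\; -\,\kappa\,\mathcal D(\Psi)^2 \bigl(1 - C\,\mathcal D(\Psi)^2\bigr),
\end{equation*}
where the bracket coefficient comes from bounding each $\textup{Re}\,h_{i_\ast k}$ from below by $1-\tfrac12\mathcal D(\Psi)^2$. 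The hypothesis $\mathcal D(\Psi^0)<\tfrac12$ keeps the bracket uniformly bounded away from zero, and Grönwall delivers exponential decay at rate $\kappa$, matching the claimed bound $\mathcal D(\Psi(t))\lesssim e^{-\kappa t}$. For part (3), the sign condition $\textup{Re}\, h_{ij}^0 > 0$ rules out the bi-polar branch of the dichotomy, because a positivity-preservation inspection of the ODE above shows the open set $\{\textup{Re}\, h_{ij}>0\ \forall i,j\}$ is forward-invariant, so part (1) immediately gives $L^2$ complete state aggregation. To upgrade this to $H^1$, I would set $\mathcal E_{ij}(t) := \tfrac12\|\nabla(\psi_i-\psi_j)\|^2$; since $\psi_i-\psi_j$ satisfies a Schr\"odinger equation with a Lohe source linear in the $\psi_k$'s, differentiating $\mathcal E_{ij}$ and invoking the uniform $H^1$ bound from Theorem \ref{T2.1} together with the exponential $L^2$ decay leads to a Grönwall inequality that closes and forces $\mathcal E_{ij}(t)\to 0$.

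The hardest step will be the dichotomy in part (1): the LaSalle argument yields without too much effort that each limit configuration is a signed bi-partition, but excluding bi-partitions in which both classes have cardinality at least two requires a genuine nonlinear instability analysis of the correlation ODE at these degenerate equilibria, and this is where I would expect the bulk of the technical effort to go.
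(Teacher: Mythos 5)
Your overall strategy coincides with the paper's: part (1) via monotonicity of the order parameter $\rho$ and a classification of the limiting correlation matrix, part (2) via a Gronwall-type differential inequality for ${\mathcal D}(\Psi)$ with a quadratic correction and the threshold ${\mathcal D}(\Psi^0)<\frac{1}{2}$, and part (3) via the closed ODE for $h_{ij}$ together with an $H^1$-energy estimate that combines the uniform $H^1$ bound of Theorem \ref{T2.1} with the exponential $L^2$ decay. Your version of the correlation ODE, with the conjugated factor $\overline{h_{ik}}+\overline{h_{kj}}$ multiplying $h_{ij}$, is in fact the correct one; the form $(h_{ik}+h_{kj})(1-h_{ij})$ displayed in the paper silently identifies these two factors, which only agree when the correlations are real.

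The genuine gap is in the step you yourself single out as the hardest: excluding signed bipartitions in which both classes have at least two elements. A nonlinear instability argument cannot do this, because an $\omega$-limit of a particular trajectory need not be attracting; unstable equilibria still have nontrivial stable sets. Worse, such balanced configurations are exact invariant states of \eqref{S-L} under the stated hypotheses: take $N=4$ and $\psi_1^0=\psi_2^0=u$, $\psi_3^0=\psi_4^0=-u$ with $\|u\|=1$. Then $\sum_k\psi_k\equiv 0$ and $\sum_k h_{jk}\equiv 0$ for every $j$, so the Lohe coupling vanishes identically, each $\psi_j$ evolves by the common free flow alone, and $h_{13}(t)\equiv -1$ while $h_{12}(t)\equiv 1$ for all time --- neither complete nor single-defector bi-polar aggregation. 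Hence the dichotomy cannot be reached for all admissible initial data by any argument; the references behind part (1) pin down the attainable limits through the quantization $\rho_\infty=|N-2m|/N$ (with $m$ the size of the minority class) combined with the monotone increase of $\rho$ from its initial value, an ingredient your proposal (and, to be fair, the survey's own statement) leaves implicit. A smaller but related issue: the forward invariance of $\{\mathrm{Re}\, h_{ij}>0\}$ claimed in your part (3) is not a mere ``positivity-preservation inspection,'' since at the boundary $\mathrm{Re}\, h_{ij}=0$ the term $\mathrm{Re}\bigl((\overline{h_{ik}}+\overline{h_{kj}})h_{ij}\bigr)$ contributes an imaginary cross term of indefinite sign; this invariance needs its own lemma.
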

\begin{proof} Below, we provide some ingredients without technical details. \newline

\noindent (i)~By direct calculation,  we can see that $\rho$ satisfies  
\begin{align} \label{D-1-1}
\begin{aligned}
& \frac{d\rho}{dt}= \kp\left( \rho^2 - \frac1N \sum_{k=1}^N \textup{Re}( \langle \zeta,\psi_k\rangle^2  )  \right), \\
& 1-\rho(t)^2 = \frac{1}{2N^2}\sum_{j,k=1}^N \|\psi_j(t)-\psi_k(t)\|^2.
 \end{aligned}
 \end{align}
Note that complete state aggregation emerges if and only if $\rho(t)$ converges to 1 as $t \to \infty$. By $\eqref{D-1-1}_1$ and boundedness of $\rho$,  the order parameter $\rho$ is non-decreasing in time, and hence it converges to a definite value $\rho_\infty$. After careful analysis of $\rho_\infty$, one can see that the disired dichotomy holds (see \cite{H-H-K}). 

\vspace{0.2cm}

\noindent (ii)~By direct calculation, one can derive  Gronwall's inequality for ${\mathcal D}(\Psi)$ (see \cite{C-H3}):  
\[ {\dot {\mathcal D}}(\Psi) \leq  \kappa \Big(-{\mathcal D}(\Psi) + 2 {\mathcal D}(\Psi)^2 \Big), \quad t > 0.
\]
This yields
\[ {\mathcal D}(\Psi(t)) \leq \frac{{\mathcal D}(\Psi^0)}{{\mathcal D}(\Psi^0) + (1-2 {\mathcal D}(\Psi^0)) e^{\kappa t}}. \]
Note that the condition ${\mathcal D}(\Psi^0) <\frac{1}{2}$ is needed to exclude the finite-time blowup of ${\mathcal D}(\Psi(t))$.

\vspace{0.2cm}

\noindent (iii)~In \cite{A-M,H-H}, the authors provided ``{\it finite dimensional approach''} based on the two-point correlation function $h_{ij}=\langle \psi_i,\psi_j \rangle$ measuring the degree of aggregation. Then, the correlation function $h_{ij}$ with $a_{ik} \equiv 1$  satisfies
\begin{equation*}
\frac{\d}{\dt} h_{ij} =  \frac{\kp}{2N} \sum_{k=1}^N (h_{ik} + h_{kj})(1-h_{ij}), \quad 1 \leq i, j \leq N.
\end{equation*}
By applying dynamical systems theory, we can obtain the desired result. 
\end{proof}

\vspace{0.5cm}

Next, we consider a  heterogenous ensemble with distinct one-body potentials. In the aforementioned work \cite{H-H}, the authors considered the case of a two-oscillator system whose external potentials are assumed to be the vertical translation of a common potential:
\[
V_j(x) =V(x) + \nu_j,\quad x\in \bbr^d,\quad \nu_j \in \bbr,\quad j=1,2.
\]
Then for the system, they exactly find the critical coupling strength for the bifurcation where the system undergoes a transition from the emergence of periodic motion to the existence of equilibrium. More precisely,  
\begin{align*}
&\textup{(i)}~\kp < \nu := |\nu_1 - \nu_2| : \textup{$h(t)$ is a periodic function with the period $\frac{\pi}{\sqrt{\nu^2-\kp^2}}$.} \\
&\textup{(ii)}~\kp = \nu:  \lim_{t\to\infty} h(t) = -\mi. \\
&\textup{(iii)}~\kp>\nu: \lim_{t\to\infty} h(t) = -\mi\frac{\nu}{\kp} + \sqrt{1-\frac{\nu^2}{\kp^2}.} 
\end{align*}
The emergent dynamics of the SL model with $N\geq3$ can be studied similarly as in previous section for some restricted class of initial data and system parameters. 
\begin{proposition}
\emph{\cite{Cho-C-H,H-H-K21}}
(Heterogeneous ensemble). The following assertions hold. 
\begin{enumerate}
\item
Suppose system parameters and initial data satisfy
\[
 \kappa > 54 {\mathcal D}(V) > 0, \quad {\mathcal D}(\Psi^0) < \alpha_2,  
\]
where $\alpha_2$ is a larger positive root of $f(x) := 2x^3-x^2+ \frac{2{\mathcal D}(V)}{\kappa}=0$, and  let $\{\psi_j\}$ be a solution to \eqref{S-L}.  Then, practical aggregation emerges:
\[ \lim_{\kappa \to \infty} \limsup_{t \to \infty} {\mathcal D}(\Psi(t)) = 0. \]

\vspace{0.1cm}

\item
Suppose system parameters and initial data satisfy  
\begin{equation} \label{Z-1}
\kp> 4  {\mathcal D}(\nu) =: 4 \max_{1 \leq i, j \leq N} |\nu_i - \nu_j |, \quad \mathcal D(\Psi^0)^2 < \frac{ \kp + \sqrt{ \kp^2- 4\kp  {\mathcal D}(\nu)}}{\kp},
\end{equation}
and let $\{\psi_j\}$ be a global solution to \eqref{S-L}. Then for each $i,j$, there exist a complex number $\alpha_{ij}^\infty \in \bbc$ such that
\[
\lim_{t\to\infty} \langle \psi_i,\psi_j\rangle(t) = \alpha_{ij}^\infty,  \quad |\alpha^{\infty}_{ij}|\leq1.
\]
In other words, state-locking occurs. Moreover, the convergence rate is exponential. 
\end{enumerate}
\end{proposition}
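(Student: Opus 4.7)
The plan is to adapt the Lyapunov-functional strategy of Theorem \ref{T3.1} to the $L^2$-geometry of \eqref{S-L}. The starting point for both parts is the evolution equation for the two-point correlation $h_{ij}(t) := \langle\psi_i(t),\psi_j(t)\rangle$; a direct computation using \eqref{S-L} with $\|\psi_j\| \equiv 1$, together with an integration by parts that kills the Laplacian contribution, gives
\[ \dot h_{ij} = \mi\!\int(V_i-V_j)\bar\psi_i\psi_j\,\dx + \frac{\kappa}{2N}\sum_{k=1}^{N}\bigl[h_{kj} + h_{ik} - (h_{ki}+h_{jk})h_{ij}\bigr]. \]
Taking real parts, using $\|\psi_i-\psi_j\|^2 = 2(1-\textup{Re}\,h_{ij})$, and selecting an extremal pair $(i_0,j_0)$ realizing $\mathcal{D}(\Psi)$ should produce a dissipative differential inequality whose structure mirrors the cubic $f(x) = 2x^3 - x^2 + 2\mathcal{D}(V)/\kappa$ appearing in part (1).

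For part (1), I would analyze $f$ directly. Since $f'(x) = 2x(3x-1)$, $f(0) = 2\mathcal{D}(V)/\kappa > 0$, and $f(1/3) = -1/27 + 2\mathcal{D}(V)/\kappa$, the hypothesis $\kappa > 54\mathcal{D}(V)$ is exactly the discriminant condition $f(1/3) < 0$, yielding two positive roots $0 < \alpha_1 < 1/3 < \alpha_2$. Following the three-step template in the proof of Theorem \ref{T3.1}(ii), I would verify that $\{\mathcal{D}(\Psi) < \alpha_2\}$ is positively invariant (the cubic inequality forces $\dot{\mathcal{D}}(\Psi) \leq 0$ at the boundary), that $\mathcal{D}(\Psi(t))$ is strictly decreasing on $(\alpha_1,\alpha_2)$, and that $\{\mathcal{D}(\Psi) < \alpha_1\}$ is also positively invariant. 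This yields $\limsup_{t\to\infty}\mathcal{D}(\Psi(t)) \leq \alpha_1$, and solving $f(\alpha_1)=0$ asymptotically gives $\alpha_1 = 2\mathcal{D}(V)/\kappa + O(\kappa^{-2})$, so sending $\kappa \to \infty$ yields practical aggregation as in \eqref{C-2-2}.

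For part (2), the appearance of $\mathcal{D}(\nu)$ in \eqref{Z-1} signals the restricted ansatz $V_j(x) = V(x) + \nu_j$, under which $V_i - V_j = \nu_i - \nu_j$ is constant and the correlation system closes into a finite-dimensional ODE with a purely imaginary rotation $\mi(\nu_i-\nu_j)h_{ij}$. I would then work with $\mathcal{L}(t) := \max_{i,j}|1-h_{ij}(t)|^2$ in the spirit of Theorem \ref{T3.4}; extremal-index selection and $|h_{k\ell}| \leq 1$ should yield an inequality of the form $\dot{\mathcal{L}} \leq \kappa\mathcal{L}(\mathcal{L} - 2) + 4\mathcal{D}(\nu)\sqrt{\mathcal{L}}$ up to numerical constants, whose fixed-point equation $\kappa y^2 - 2\kappa y + 4\mathcal{D}(\nu) = 0$ has roots $1 \pm \sqrt{1 - 4\mathcal{D}(\nu)/\kappa}$. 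The condition \eqref{Z-1} places $\mathcal{L}(0)$ below the larger root, so $\mathcal{L}(t)$ contracts exponentially to the smaller root. Exponential decay of $\dot h_{ij}$ then follows from the ODE, forcing each $h_{ij}(t)$ to be Cauchy in time with a limit $\alpha_{ij}^\infty \in \bbc$, and the bound $|\alpha_{ij}^\infty| \leq 1$ is inherited from $|h_{ij}(t)| \leq 1$ by Cauchy--Schwarz.

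The main obstacle I anticipate is the sharp extremal-index bookkeeping underlying the cubic inequality in part (1). Because the coupling in \eqref{S-L} involves the quotient $\langle\psi_j,\psi_k\rangle/\langle\psi_j,\psi_j\rangle$, differentiating $\max_{i,j}\|\psi_i-\psi_j\|$ produces cross-terms like $\textup{Re}[h_{ki}(1-h_{ij})]$ whose decoupling into a clean cubic in $\mathcal{D}(\Psi)$ requires the refined Cauchy--Schwarz estimate $|1 - \langle\psi_c,\psi_i\rangle| \leq \|\psi_i - \psi_c\| \leq \mathcal{D}(\Psi)$ rather than a cruder linear bound. Tracking every multiplicative constant carefully is what pins down the sharp threshold $\kappa > 54\mathcal{D}(V)$ as the discriminant of $f$; after that, the remaining phase-line and Cauchy-sequence arguments proceed along the Section \ref{sec:3.1} template verbatim.
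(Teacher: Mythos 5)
Your treatment of part (1) follows the paper's own route essentially verbatim: the same root analysis of the cubic $f$, the same invariant-set/phase-line argument, and the limit $\alpha_1(\kappa)\to 0$ as $\kappa\to\infty$. No issues there.

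Part (2), however, contains a genuine gap at the step ``exponential decay of $\dot h_{ij}$ then follows from the ODE.'' Your Lyapunov functional $\mathcal{L}(t)=\max_{i,j}|1-h_{ij}(t)|^2$ can at best be driven into a neighborhood of the \emph{smaller positive} root of your fixed-point quadratic, which is of size $O(\mathcal{D}(\nu)/\kappa)>0$; this is a practical-aggregation-type bound, not convergence. From such a bound you cannot conclude that $\dot h_{ij}\to 0$: by your own correlation equation, $\dot h_{ij}$ contains the drift term $\mi(\nu_i-\nu_j)h_{ij}$, whose modulus stays close to $|\nu_i-\nu_j|$ precisely because $|h_{ij}|$ remains near $1$ in the regime you establish. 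The content of state-locking is that the coupling asymptotically balances this drift, and that requires an actual convergence mechanism, not a smallness estimate on $\mathcal{L}$. (Indeed, the paper's own two-oscillator example shows the limit $\alpha_{ij}^{\infty}$ is generically \emph{not} $1$, so any functional centered at $h_{ij}=1$ cannot detect the limit.) The paper's proof supplies the missing idea: it compares the correlation dynamics of two arbitrary solutions via $d(\mathcal{H},\tilde{\mathcal{H}})(t):=\max_{i,j}|h_{ij}(t)-\tilde h_{ij}(t)|$, derives the contraction
\[
\frac{\d}{\dt}\, d(\mathcal{H},\tilde{\mathcal{H}}) \leq -\kappa\bigl(1-\mathcal{D}(\Psi)^2\bigr)\, d(\mathcal{H},\tilde{\mathcal{H}}),
\]
shows under \eqref{Z-1} that the contraction rate is eventually positive, and then exploits autonomy of \eqref{S-L} by taking $\tilde h_{ij}(t)=h_{ij}(t+T)$ to conclude that $\{h_{ij}(n)\}_{n\in\bbz_+}$ is Cauchy in $\{z\in\bbc:|z|\leq 1\}$. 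You would need to replace your final step with an argument of this type (or some other genuine asymptotic-constancy argument) to obtain the existence of $\alpha_{ij}^{\infty}$; the bound $|\alpha_{ij}^{\infty}|\leq 1$ by Cauchy--Schwarz is then immediate, as you say.
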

\begin{proof}

\noindent (i)~ Consider the equation:
\[   f(x) := 2x^3-x^2+ \frac{2{\mathcal D}(V)}{\kappa}=0, \quad x \in [0, \infty), \qquad  \kappa > 54 {\mathcal D}(V). \]
Then, the cubic equation $f = 0$ has a positive local maximum $ \frac{2{\mathcal D}(V)}{K}$ and a negative local minimum
$\frac{2D({\mathcal V})}{K}   -\frac{1}{27}$ at $x = 0$ and $\frac{1}{3}$, respectively. Moreover, it has two positive real roots  $\alpha_1 <  \alpha_2 $:
\[  0 < \alpha_1 < \frac{1}{3} < \alpha_2 < \frac{1}{2}. \]
Clearly, the roots depend continuously on $\kappa$ and ${\mathcal D}(V)$, and
\[ \label{Fact-1}
 \lim_{\kappa \to \infty} \alpha_1 = 0,  \quad  \lim_{\kappa \to \infty} \alpha_2 = \frac{1}{2}.
\]
The flow issued from the  initial data satisfying ${\mathcal D}(\Psi^0) < \alpha_2$ tends to the set $\{ \Psi~:~{\mathcal D}(\Psi) < \alpha_1 \}$ in finite-time. Moreover, this set is positively invariant, i.e.,
\[ {\mathcal D}(\Psi(t)) < \alpha_1, \quad t \gg1. \]
On the other hand one has  $ \lim_{\kappa \to \infty} \alpha_1(\kappa) = 0$. This yields the desired estimate. Detailed argument can be found in \cite{Cho-C-H}. 

\vspace{0.1cm}

\noindent (ii)~In order to show that the limit of $h_{ij} =\langle \psi_i,\psi_j\rangle$ exists, we consider any two solutions to \eqref{S-L} denoted by  $\{\psi_j\}$ and $\{\tilde{\psi}_j\}$, and we write
\[
h_{ij} = \langle \psi_i,\psi_j\rangle, \quad \tilde h_{ij} := \langle \tilde\psi_i,\tilde\psi_j\rangle,
\]
and define the diameter measuring the dissimilarity of two correlation functions:
\[
d(\mathcal H, \tilde {\mathcal H})(t) := \max_{1\leq i,j\leq N } |h_{ij}(t) - \tilde h_{ij}(t)|.
\]
Then, we find a differential inequality for $d(\mathcal H,\tilde{\mathcal H})$:
\[
\frac\d\dt d(\mathcal H,\tilde {\mathcal H}) \leq -\kp (1-   \mathcal D(\Psi)^2) d(\mathcal H,\tilde {\mathcal H}) ,\quad t>0.
\]
Under the assumption \eqref{Z-1}, we show that $d(\mathcal H,\tilde{\mathcal H})$ tends to zero. Once we establish the zero convergence of $d(\mathcal H,\tilde{\mathcal H})$, since our system is autonomous, we can choose $\tilde h_{ij}$ as $\tilde h_{ij}(t)  = h_{ij}(t+T)$ for any $T>0$. By discretizing the time $t\in \bbr_+$ as $n\in \bbz_+$, one can deduce that $\{h_{ij}(n)\}_{n\in \bbz_+}$ becomes a Cauchy sequence in the complete space $\{z\in \bbc:|z|\leq1\}$. Thus, we find the desired complex number $\alpha_{ij}^\infty$.

\end{proof}
\subsubsection{Network structure} \label{sec:4.1.2}
For the interplay between emergent behaviors and network structures, the authors in \cite{H-H-K} considered the following three types of network structures: cooperative, competitive and cooperative-competitive networks depending on their signs:
\begin{align*}
&\textup{(i)}~\textup{Cooperative: $a_{ik}>0$ for all $i,k=1,\cdots,N$.} \\
&\textup{(ii)}~\textup{Competitive: $a_{ik}<0$ for all $i,k=1,\cdots,N$.} \\
&\textup{(iii)}~\textup{Cooperative-competitive: $a_{ik}=(-1)^{i+k}$ for $i<k$ and $a_{ki} =-a_{ik}$. }
\end{align*}

 For the cooperative network $a_{ik}$, all values of $a_{ik}$ have positive values so that we can expect complete state aggregation. Here, we associate statistical quantities for the cooperative network  $\{a_{ik}\}$:
 \begin{align*}
 d(\mathcal A) := \max_{1\leq i,j,k\leq N }  |a_{ik} - a_{jk}|, ~~ a_m^c := \min_{1\leq i \leq N } \frac1N \sum_{k=1}^N a_{ik},~~ a_M := \max_{1\leq i,j\leq N } a_{ij}
 \end{align*}
On the other hand, for the competitive network, all values of $a_{ik}$ are assumed to be negative and they considered the simplest case $a_{ik}\equiv -1$. In this case, each oscillator would exhibit repulsive behaviors. Lastly for the cooperative-competitive network, some of $a_{ik}$ are positive and some are negative. Hence, we expect interesting dynamical patterns other than aggregation, such as periodic orbit or bi-polar aggregation. The arguments above are summarized in the following theorem. 
  \begin{theorem}
  \emph{\cite{H-H-K}}
 Let $\{\psi_j\}$ be a global solution to \eqref{S-L}. 
 \begin{enumerate}
 \item Suppose that $a_{ik}$ and initial data satisfy 
 \begin{equation*}
a_{ik}>0,\quad  d(\mathcal A) < a_m^c,\quad \mathcal D(\Psi^0)^2 <\frac{2(a_m^c - d(\mathcal A))}{a_M}.
 \end{equation*}
 Then, system \eqref{S-L} exhibits complete state aggregation. 
 
 \vspace{0.2cm}
 
 \item 
Suppose that $a_{ik}$ and initial data satisfy 
 \begin{equation*}
a_{ik} \equiv -1,\quad \rho^0 >0.
\end{equation*}
Then, a solution to  \eqref{S-L} tends to the splay state. 

\vspace{0.2cm}

\item Suppose for $N=4$ that $a_{ik}$ and initial data satisfy 
\begin{equation*}
 a_{ik} = (-1)^{i+k},\quad 2 + h_{12}^0 + h_{14}^0 + h_{23}^0 + h_{34}^0 < h_{13}^0 + h_{24}^0.
\end{equation*} 
Then, we have
\begin{equation*}
\lim_{t\to\infty} h_{ij}(t)  = (-1)^{i+j}.
\end{equation*}
 \end{enumerate}
 \end{theorem}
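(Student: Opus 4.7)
The unifying tool is the closed ODE system for the two-point correlations $h_{ij}:=\langle\psi_i,\psi_j\rangle$. Assuming common one-body potentials (which I take for concreteness throughout), the linear Hamiltonian part $-\tfrac12\Delta+V$ contributes nothing to $\frac{d}{dt}h_{ij}$ by self-adjointness, and what remains is the finite-dimensional system
\begin{equation*}
\frac{d}{dt}h_{ij}=\frac{\kappa}{2N}\sum_{k=1}^{N}\Bigl\{a_{ik}(h_{kj}-h_{ik}h_{ij})+a_{jk}(h_{ik}-h_{kj}h_{ij})\Bigr\}
\end{equation*}
on the compact Gram variety $\{H:h_{ii}=1,\ h_{ji}=\overline{h_{ij}},\ H\succeq 0\}$. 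Each part will be established by choosing a Lyapunov quantity adapted to the sign pattern of $(a_{ik})$.

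For (1), I follow the template of Theorem \ref{T4.1}(2). Using $\|\psi_i-\psi_j\|^2=2(1-\textup{Re}\,h_{ij})$ and selecting extremal indices realising $\mathcal D(\Psi)$, the diagonally dominant contribution produced by $a^c_m$ supplies a linear contraction, while the heterogeneity $d(\mathcal A)$ and the cubic self-interactions $a_{ik}h_{ik}h_{ij}$ give a defect controlled by $a_M \mathcal D(\Psi)^2$. This yields the Riccati-type inequality
\begin{equation*}
\frac{d}{dt}\mathcal D(\Psi)^2\le -\kappa\bigl[\,2(a^c_m-d(\mathcal A))-a_M\mathcal D(\Psi)^2\,\bigr]\mathcal D(\Psi)^2,
\end{equation*}
whose bracket stays strictly positive under the threshold hypothesis, giving exponential decay via Gronwall's lemma.

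For (2), I sum the SL equation over $j$ and take the $L^2$ inner product with $\bar\psi=N^{-1}\sum_k\psi_k$; with $a_{ik}\equiv-1$ the unitary part drops and a direct calculation yields
\begin{equation*}
\frac{d}{dt}\rho^2=-\kappa\Bigl(\rho^2-\frac1N\sum_{j=1}^{N}\textup{Re}\langle\psi_j,\bar\psi\rangle^2\Bigr)\le 0,
\end{equation*}
the sign following from $|\langle\psi_j,\bar\psi\rangle|\le\rho$. Hence $\rho(t)\searrow\rho_\infty\ge 0$, and I apply the LaSalle invariance principle on the compact $h_{ij}$-variety to force $\rho_\infty=0$: on the $\omega$-limit set the derivative vanishes, so equality holds in Cauchy--Schwarz for every $j$, which forces each $\psi_j$ to be a unimodular multiple of $\bar\psi$; substituting back into the repulsive $h_{ij}$-ODE then rules out $\rho_\infty>0$. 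The hypothesis $\rho^0>0$ simply excludes the trivial case of starting already at a symmetric equilibrium.

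For (3), I introduce
\begin{equation*}
\mathcal F(t):=2+\textup{Re}\bigl[h_{12}+h_{14}+h_{23}+h_{34}-h_{13}-h_{24}\bigr](t),
\end{equation*}
so that the initial-data hypothesis reads $\mathcal F(0)<0$ while the target $h_{ij}=(-1)^{i+j}$ corresponds to $\mathcal F_\infty=-4$. Inserting $a_{ik}=(-1)^{i+k}$ (for $i<k$, extended antisymmetrically) into the $h_{ij}$-ODE, pairing indices of the same parity and exploiting the antisymmetry $a_{ki}=-a_{ik}$ should make the linear terms collapse into a positive multiple of $-(\mathcal F+4)$, while the cubic remainder has the same dissipative sign on $\{\mathcal F<0\}$. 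This will give both positive invariance of $\{\mathcal F<0\}$ and an inequality of the form $\dot{\mathcal F}\le -c\kappa(\mathcal F+4)$ on that set, so $\mathcal F(t)\to -4$; combined with $|h_{ij}|\le 1$, this squeezes each summand onto its extremal value $(-1)^{i+j}$. The hardest step is precisely this sign bookkeeping: verifying that the antisymmetric cubic terms do not defeat the dissipation near the boundary $\mathcal F=0$ is where the restriction $N=4$ becomes essential, since the favourable parity pairing breaks down for larger $N$. The LaSalle analysis in (2) is also non-trivial, requiring a classification of invariant submanifolds of the repulsive $h_{ij}$-dynamics; part (1), by contrast, is essentially a transcription of the methodology already present in the excerpt.
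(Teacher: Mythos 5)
First, note that the survey itself states this theorem without proof (it is quoted from \cite{H-H-K}), so your proposal can only be measured against the methodology the paper sketches elsewhere. Part (1) is consistent with that methodology: the Riccati-type inequality for $\mathcal D(\Psi)^2$ with contraction coefficient $2(a_m^c-d(\mathcal A))$ and defect $a_M\,\mathcal D(\Psi)^2$ reproduces exactly the stated threshold and is the same Lyapunov-functional argument as in Theorem \ref{T4.1}(2). Part (3), by contrast, is only a plan: the crucial computation --- that inserting $a_{ik}=(-1)^{i+k}$, $a_{ki}=-a_{ik}$ into the $h_{ij}$-system collapses the linear terms into a negative multiple of $\mathcal F+4$ and that the cubic remainder keeps the right sign on $\{\mathcal F<0\}$ --- is precisely what a proof of (3) consists of, and you leave it explicitly unverified (``should make'', ``will give''). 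Choosing $\mathcal F$ so that the hypothesis reads $\mathcal F(0)<0$ and the target is the extremal value $\mathcal F=-4$ forced by $|h_{ij}|\le 1$ is the right idea, but nothing is proved until that bookkeeping is done.

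The genuine gap is in part (2). Your monotonicity step is fine: one does get $\frac{\d}{\dt}\rho^2=-\kappa\bigl(\rho^2-\frac1N\sum_j\textup{Re}\,\langle\psi_j,\bar\psi\rangle^2\bigr)\le0$ from $|\langle\psi_j,\bar\psi\rangle|\le\rho$. But the LaSalle step fails as you have written it. Equality in the two estimates forces each $\psi_j$ to equal $\pm\phi$ for a single unit profile $\phi$ (Cauchy--Schwarz makes $\psi_j$ proportional to $\bar\psi$, and reality of $\langle\psi_j,\bar\psi\rangle$ makes the proportionality factor real), and \emph{every} such collinear configuration is an exact equilibrium of \eqref{S-L} with $a_{ik}\equiv-1$: if $\psi_j=c_j\phi$ with $|c_j|=1$, then $\psi_k-\frac{\langle\psi_j,\psi_k\rangle}{\langle\psi_j,\psi_j\rangle}\psi_j=c_k\phi-\bar c_j c_k c_j\phi=0$. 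Such equilibria have $\rho=|N_+-N_-|/N$, where $N_\pm$ counts the signs, which is strictly positive unless the signs balance; so ``substituting back into the repulsive $h_{ij}$-ODE'' rules out nothing --- the fully aligned state ($\rho=1$) and all unbalanced bipolar states survive the substitution. (Indeed, identical initial data give $\rho\equiv1$ for all time, so reaching the splay state requires some input beyond $\rho^0>0$: an instability or genericity argument for the collinear equilibria, or a finer functional than $\rho$. Your reading of the hypothesis $\rho^0>0$ as merely excluding a symmetric starting equilibrium is likewise off --- it excludes starting \emph{at} the splay state, not converging to an aligned one.)
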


\subsection{Standing wave solution} \label{sec:4.2} 
In this subsection, we consider a specific type of a solution whose shape is invariant under the flow, namely, a standing wave solution. We begin with the ansatz for $\psi_i$: 
\begin{equation} \label{E-1-1}
\psi_j(x,t) =  u(x) e^{-\mi E t} \quad \text{with} \quad \|u\|=1, \quad j = 1, \cdots, N,
\end{equation}
where $E$ is a real number. % and the unit modulus of $u$ follows from the unit modulus of $\psi_i$. 
We substitute the ansatz \eqref{E-1-1} into \eqref{S-L} with an identical harmonic potential $V_i(x) = |x|^2$ to  derive
\begin{equation} \label{E-1-2}
-\Delta u + |x|^2 u = Eu, \quad x\in \bbr^d.
\end{equation}
In what follows, we consider the one-dimensional $d=1$ case and   generalization to the multi-dimensional case can be constructed from the tensor product of a one-dimensional solution. For the one-dimensional case, equation \eqref{E-1-2} becomes 
\begin{equation} \label{E-2}
-u_{xx} + x^2u = Eu, \quad x \in \bbr.
\end{equation}
Then, it is well known that the equation \eqref{E-2} has eigenvalues $E_k$ and orthonormal eigenfunctions (or constant multiple of the Hermite functions) $u_k$: 
\begin{equation} \label{E-2-1}
E_k=2k+1, \qquad u_k(x)= \left(  \frac{1}{\sqrt{\pi} 2^k k!} \right)^{1/2} e^{-x^2/2} H_k(x), \quad k = 0,1, \cdots,
\end{equation}
where $H_k$ is the $k$-th Hermite polynomial defined by 
\begin{equation*}
H_k(x)=(-1)^k e^{x^2} \frac{d^k}{dx^k}e^{-x^2}, \quad k = 0, 1, \cdots.
\end{equation*}
We now consider the following Cauchy problem: 
\begin{equation} \label{E-3}
\begin{cases}
\displaystyle \mi u_t = -u_{xx} + x^2 u, \quad x \in \bbr,~~t > 0, \\
\displaystyle u(x,0)=u_0(x) = \sum_{k=0}^\infty a_ku_k(x),
\end{cases}
\end{equation}
where $\{u_k\}$ is defined in \eqref{E-2-1} and $\{ a_n\}$ is a $\ell^2$-sequence of complex numbers. Then, it can be easily seen that the solution $u(x,t)$ of the Cauchy problem \eqref{E-3} is given as follows:
\begin{equation*} %\label{E-4}
u(x,t) = \sum_{k=0}^\infty a_ku_k(x) e^{-\mi (2k+1)t} .
\end{equation*}
Next, we study the stability issue for the two types of the standing wave solutions whose existence is guaranteed by the previous argument: 
\begin{align*}
 \textup{(I)}~~ &\psi_j(x,t) =u_k(x) e^{-\mathrm{i} (2k+1)t}  \,\,\textup{ for } j=1, \,2, \cdots, N.  \\
 \textup{(II)}~~&\psi_1(x,t)=- u_k(x) e^{-\mathrm{i} (2k+1)t} \quad \textup{and}\quad \psi_j =  u_k(x) e^{-\mathrm{i} (2k+1)t} ~~ \textup{ for } j\neq1,
\end{align*}
where $u_k$ is given by the formula \eqref{E-2-1}. \newline

Note that the  family  (I) corresponds to the completely aggregated state, whereas the  family  (II) corresponds to   bi-polar state. As in Theorem \ref{T4.1},    bi-polar state is unstable. Below, under the initial condition for which complete state aggregation occurs, the SL model becomes stable.
\begin{theorem} \label{T4.3}
\emph{\cite{H-H-K2}}
The family  $(I)$ is stable in the following sense: for all $\varepsilon>0$, there exists $\delta>0$ such that 
\[
\|\psi^0_j - u_k\|<\delta \quad \Longrightarrow \quad \lim_{t\to\infty} \|\psi_j(t) - u_k e^{-\mi(2k+1)t} \| < \varepsilon.
\]
\end{theorem}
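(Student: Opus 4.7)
The strategy is to pass to the interaction picture relative to the harmonic-oscillator Hamiltonian $H$ driving the standing wave, and combine exponential aggregation from Theorem \ref{T4.1}(ii) with the unitarity of $e^{-\mi Ht}$. Since $Hu_k=E_k u_k$ with $E_k=2k+1$, the identity $e^{-\mi Ht}u_k=u_k e^{-\mi E_k t}$ together with $L^2$-unitarity of $e^{-\mi Ht}$ yields the key reduction
\[ \|\psi_j(t)-u_ke^{-\mi E_k t}\|=\|\tilde\psi_j(t)-u_k\|,\qquad \tilde\psi_j(t):=e^{\mi Ht}\psi_j(t), \]
for every $t\geq 0$. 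A direct computation (exploiting gauge invariance of the SL coupling under multiplication by a common unitary, together with the identity of potentials $V_i$) shows that $\tilde\Psi=\{\tilde\psi_j\}$ solves the pure-coupling system
\begin{equation*}
\p_t\tilde\psi_j=\frac{\kp}{2N}\sum_{\ell=1}^N\left(\tilde\psi_\ell-\frac{\langle\tilde\psi_j,\tilde\psi_\ell\rangle}{\|\tilde\psi_j\|^2}\tilde\psi_j\right),\quad \tilde\psi_j(0)=\psi_j^0,
\end{equation*}
with $\|\tilde\psi_j(t)\|\equiv 1$ and $\mathcal D(\tilde\Psi(t))=\mathcal D(\Psi(t))$. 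It therefore suffices to prove $\tilde\psi_j(t)\to\tilde\psi_\infty$ in $L^2$ with $\|\tilde\psi_\infty-u_k\|<\varepsilon$.

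Given $\delta\in(0,1/8)$ with $\|\psi_j^0-u_k\|<\delta$ for every $j$, the triangle inequality gives $\mathcal D(\Psi^0)\leq 2\delta<1/2$, so Theorem \ref{T4.1}(ii) applies. Inspecting its proof, the closed-form bound $\mathcal D(\Psi(t))\leq\frac{\mathcal D(\Psi^0)}{(1-2\mathcal D(\Psi^0))e^{\kp t}}$ yields
\[ \mathcal D(\tilde\Psi(t))=\mathcal D(\Psi(t))\leq C_\star\delta\,e^{-\kp t},\qquad t\geq 0, \]
for an explicit constant $C_\star$ (for instance $C_\star=4$). Next, the algebraic identity
\[ \tilde\psi_\ell-\frac{\langle\tilde\psi_j,\tilde\psi_\ell\rangle}{\|\tilde\psi_j\|^2}\tilde\psi_j=(\tilde\psi_\ell-\tilde\psi_j)+\frac{\langle\tilde\psi_j,\tilde\psi_j-\tilde\psi_\ell\rangle}{\|\tilde\psi_j\|^2}\tilde\psi_j, \]
combined with $\|\tilde\psi_j\|=1$ and Cauchy--Schwarz, produces the pointwise estimate $\|\p_t\tilde\psi_j(t)\|\leq\kp\,\mathcal D(\tilde\Psi(t))\leq\kp C_\star\delta\, e^{-\kp t}$ on the right-hand side of the interaction-picture system.

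Integrating this estimate gives, for $0\leq s\leq t$,
\[ \|\tilde\psi_j(t)-\tilde\psi_j(s)\|\leq\int_s^t\|\p_\tau\tilde\psi_j\|\,\d\tau\leq C_\star\delta\,e^{-\kp s}, \]
so each $\tilde\psi_j(t)$ is $L^2$-Cauchy in $t$, converging to some $\tilde\psi_\infty^{(j)}\in L^2$; and aggregation $\mathcal D(\tilde\Psi(t))\to 0$ forces all these limits to coincide with a common $\tilde\psi_\infty$. Setting $s=0$ yields $\|\tilde\psi_\infty-\psi_j^0\|\leq C_\star\delta$, and the triangle inequality $\|\tilde\psi_\infty-u_k\|\leq C_\star\delta+\delta$ together with the unitary identity from the first paragraph gives
\[ \lim_{t\to\infty}\|\psi_j(t)-u_k e^{-\mi E_k t}\|=\|\tilde\psi_\infty-u_k\|\leq(C_\star+1)\delta, \]
so choosing $\delta<\min\{1/8,\varepsilon/(C_\star+1)\}$ closes the argument. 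The only delicate point I anticipate is making the linear dependence $\mathcal D(\Psi(t))\lesssim\mathcal D(\Psi^0)e^{-\kp t}$ quantitative, since Theorem \ref{T4.1}(ii) is stated with asymptotic notation; one must revisit the Gronwall step $\dot{\mathcal D}(\Psi)\leq\kp(-\mathcal D(\Psi)+2\mathcal D(\Psi)^2)$ in its proof to extract an explicit prefactor proportional to the initial diameter. Everything else is routine, resting on the $L^2$-unitary invariance of $e^{-\mi Ht}$ and the integrability of the aggregation rate.
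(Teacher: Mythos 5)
Your proof is correct. The survey states Theorem \ref{T4.3} without any proof (it only cites \cite{H-H-K2}), so there is nothing in the paper to compare against line by line; your route --- conjugating by the unitary free flow $e^{-\mi Ht}$ to reduce to the pure-coupling system, extracting the explicit bound $\mathcal D(\Psi(t))\leq \frac{\mathcal D(\Psi^0)}{1-2\mathcal D(\Psi^0)}e^{-\kappa t}$ from the logistic inequality behind Theorem \ref{T4.1}(ii), and then using integrability of $\|\p_t\tilde\psi_j\|$ to get a Cauchy limit within $O(\delta)$ of $\psi_j^0$ --- is the natural argument and all steps check out (in particular $C_\star=4$ works once $\mathcal D(\Psi^0)\leq 2\delta<1/4$). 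The only caveat you flag, making the prefactor proportional to $\mathcal D(\Psi^0)$ quantitative, is resolved exactly as you describe, so the argument is complete.
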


\begin{remark}
Note that we cannot expect an asymptotic stability of  $u_k(x) e^{-\mi (2k+1)t}$. In fact, we consider the following form of solution:
 \begin{equation*}
\psi_j(x,t) = (1-a) u_k(x) e^{-\mi (2k+1)t}+ b u_{k+1}(x) e^{-\mi (2k+3)t} \quad \textup{for } j=1, \,2, \cdots, N,
\end{equation*}
where $ |1-a|^2 + |b|^2=1$. Then,  it is easy to check that the above $\psi_j$ is a solution to \eqref{S-L} and
\begin{align*}
\| \psi_j(x,t)-u_k(x) e^{-\mi (2k+1)t}\|^2= |a|^2+ |b|^2 = 2 \textup{Re} (a).
\end{align*}
\end{remark}
 \subsection{Numeric scheme}
In \cite{B-H-K-T}, the authors consider nonlinearly coupled Schr\"odinger-Lohe type system by employing cubic nonlinearity so that the model would be reduced to the Gross-Pitaevskii equation when the coupling is turned off:
\begin{equation}  \label{G-P-L}
\begin{cases}
\displaystyle \mi \p_t \psi_j = - \frac{1}{2} \Delta \psi_j + V_j\psi_j + \sum_{k=1}^N \beta_{jk} |\psi_k|^2 \psi_j  \\
\displaystyle \hspace{3.7cm}+ \frac{\mi\kp}{2N}\sum_{k=1}^N a_{jk} \left( \psi_k - \frac{ \langle \psi_j,\psi_k\rangle}{\langle \psi_j,\psi_j\rangle} \psi_j \right),\\
\displaystyle \psi_j(x,0) = \psi_j^0(x),\quad (x,t)\in \bbr^d\times \bbr_+, \quad j=1,\cdots,N.
\end{cases}
\end{equation}
Next, we discuss an efficient and  accurate numerical method for discretizing   \eqref{G-P-L}. Several numerical examples will be carried out and compared with corresponding  analytical results shown in previous section.      Due to the external trapping potential $V_j(x)$ ($j=1,\cdots,N$), the wave functions $\psi_j$   ($j=1,\cdots,N$) decay  exponentially fast as $|x|\rightarrow\infty$. Therefore, it suffices to truncate the  problem  \eqref{G-P-L}  into a sufficiently large bounded  domain $\mathcal{D}\subset\mathbb{R}^d$  with periodic boundary condition (BC). The  bounded  domain $\mathcal{D}$ is chosen as a box $[a,b]\times[c,d]\times[e,f]$ in 3D, a rectangle $[a,b]\times[c,d]$ in 2D, and an interval $[a,b]$ in 1D.

\subsubsection{A time splitting Crank-Nicolson spectral method}
First, we begin with the description of   \eqref{G-P-L} combining a time splitting spectral method and the Crank-Nicolson method. Choose $\Delta t>0$ as the time step size and denote time steps $t_n:=n\Delta t$ for $n\ge0$. From time $t=t_n$ to $t=t_{n+1}$,  the GPL is solved in three splitting steps.  One  solves first
\be
\label{eq:TS-L}
\mi \p_t \psi_j = - \frac{1}{2} \Delta \psi_j , \quad x\in\mathcal{D},  \quad j=1,\cdots, N,
\ee
with periodic BC on  the boundary $\partial\mathcal{D}$ for  the time step of length $\Delta t$, then solves
\be
\label{eq:TS-NonL01}
\mi \p_t \psi_j =V_j\psi_j + \sum_{k=1}^N \beta_{jk} |\psi_k|^2 \psi_j,  \quad j=1,\cdots, N,
\ee
 for the same time step, and finally solves
 \be
\label{eq:TS-NonL}
\mi \p_t \psi_j = \frac{\mi\kp}{2N}\sum_{k=1}^N a_{jk} \left( \psi_k - \frac{ \langle \psi_j,\psi_k\rangle}{\langle \psi_j,\psi_j\rangle} \psi_j \right),\quad j=1,\cdots, N,
\ee
 for the same time-step. The linear subproblem \eqref{eq:TS-L} is  discretized in space by the Fourier pseudospectral method and integrated in time analytically in the phase space \cite{B,B-C3,B-C4,B-J-M}. For the nonlinear subproblem \eqref{eq:TS-NonL01}, it conserves $|\psi_k|^2$ pointwise in time, i.e. $|\psi_k(x,t)|^2\equiv |\psi_k(x,t_n)|^2$ for $t_n\le t\le t_{n+1}$ and $k=1,\ldots,N$ \cite{B,B-C3,B-C4,B-J-M}. 
 Thus it collapses
 to a linear subproblem and can be integrated in time analytically \cite{B,B-C3,B-C4,B-J-M}.
 For the nonlinear subproblem \eqref{eq:TS-NonL}, due to the presence of the Lohe term involving $\kappa$, it cannot be integrated analytically  (or explicitly) in the way for the standard GPE \cite{B,B-C3}. Therefore,  we  will apply a Crank-Nicolson  scheme  \cite{B-C40} to further discretize the temporal derivate  of  \eqref{eq:TS-NonL}.

To simplify the presentation, we will only provide the scheme for 1D. Generalization to $d>1$ is straightforward for tensor grids. To this end, we choose  the spatial mesh size  as  $\Delta x=\frac{b-a}{M}$ with $M$ a even positive integer, and let the grid points be
\begin{equation*}
x_\ell=a+ \ell \Delta x, \qquad \ell=0,\cdots,M.
\end{equation*}
For $1\le j\le N$  denote $\psi_{j,\ell}^n$ as the approximation of $\psi_j(x_\ell, t_n)$ ($0\le \ell\le M$) and $\bm{\psi}_j^n$ as the solution vector with component $\psi_{j,\ell}^n$.   Combining the time splitting \eqref{eq:TS-L}--\eqref{eq:TS-NonL} via the Strang splitting  and the  Crank-Nicolson  scheme for \eqref{eq:TS-NonL}, a second order  \textit{Time Splitting Crank-Nicolson Fourier Pseudospectral} (TSCN-FP) method to solve GPL on $\mathcal{D}$ reads as:  
\bea
%\label{eq:TSCNFP1} 
\nonumber
%\begin{cases}
 \psi^{(1)}_{j,\ell}
 &=&\sum_{p=-M/2}^{M/2-1}e^{-i\Delta t\,\mu_p^2/4}\,\widehat{(\bm{\psi}_j^n)}_p\,e^{i\mu_p(x_\ell-a)},\\[0.5em]
\label{eq:TSCNFP12}
\psi^{(2)}_{j,\ell}
\nonumber
 &=&e^{-i\Delta t\left(V_{j}(x_\ell)+\sum_{k=1}^N \beta_{jk}|\psi^{(1)}_{k,\ell}|^2 \right)/2}\;\psi^{(1)}_{j,\ell},\\[0.5em]
\label{eq:TSCNFP2}
i\frac{\psi^{(3)}_{j,\ell}-\psi^{(2)}_{j,\ell}}{\Delta t}
&=&\textcolor{black}{\frac{{\mathrm i}\kappa}{2N}}\sum_{k=1}^{N}a_{jk}\bigg[\psi_{k,\ell}^{(\fl{5}{2})} -\frac{\big\lela \bm{\psi}_{j}^{(\fl{5}{2})},\bm{\psi}_{k}^{(\fl{5}{2})}\big\rira_{\Delta x} }{\big\lela \bm{\psi}_j^{(\fl{5}{2})}, \bm{\psi}_j^{(\fl{5}{2})}\big\rira_{\Delta x}}\;\psi_{j,\ell}^{(\fl{5}{2})} \bigg], \\[0.5em]
%\label{eq:TSCNFP32}
\nonumber
\psi^{(4)}_{j,\ell}
 &=&e^{-i\Delta t\left(V_{j}(x_\ell)+\sum_{k=1}^N \beta_{jk}|\psi^{(3)}_{k,\ell}|^2\right)/2}\;\psi^{(3)}_{j,\ell},\quad
0\le \ell\le M,\\[0.5em]
%\label{eq:TSCNFP3}
\nonumber
\psi^{n+1}_{j,\ell}&=&\sum_{p=-M/2}^{M/2-1}e^{-i\Delta t\,\mu_p^2/4}\,\widehat{(\bm{\psi}_j^{(4)})}_p\,e^{i\mu_p(x_\ell-a)},\quad j=1,\cdots,N.
% \end{cases}
\eea
Here, $\mu_p=\frac{p\pi}{b-a}$, $\widehat{(\bm{\psi}_j^{n})}_p$ and $\widehat{(\bm{\psi}_j^{(4)})}_p$ ($p=-\frac{M}{2},\cdots,\frac{M}{2}$) are the discrete Fourier transform coefficients of the vectors $\bm{\psi}_j^{n}$ and $\bm{\psi}_j^{(4)}$ ($j=1,\cdots, N$), respectively. Moreover,
\begin{equation*}
%\rho^{(\fl{3}{2})}_{j,\ell}=:\frac{1}{2}\Big(\big|
%\psi^{(2)}_{j,\ell}\big|^2+\big|\psi^{(1)}_{j,\ell}\big|^2\Big),\;
\psi^{(\fl{5}{2})}_{j,\ell}=:\frac{1}{2}\Big( \psi^{(3)}_{j,\ell}+\psi^{(2)}_{j,\ell}\Big),\qquad  
\big\lela \bm{\psi}_{j}^{(\fl{5}{2})},\bm{\psi}_{k}^{(\fl{5}{2})}\big\rira_{\Delta x}
=:\Delta x \sum_{\ell=0}^{M-1}\psi^{(\fl{5}{2})}_{j,\ell}\,
\bar{\psi}^{(\fl{5}{2})}_{k,\ell}.
\end{equation*}
\textcolor{black}{Although the Crank-Nicolson step \eqref{eq:TSCNFP2} is fully implicit, it can be either  solved efficiently by Krylov subspace iteration method with proper preconditioner \cite{AD1} or the fixed-point iteration  method with a stabilization parameter \cite{B-C-L}. In addition, TSCN-FP is of  spectral accuracy in space and second-order accuracy in time. By following the standard procedure, it is straightforward to show that the TSCN-FP conserve mass of each component  in discrete level, i.e.,
$\|\bm{\psi}_j^{n}\|_{l^2}^2:=\big\lela \bm{\psi}_{j}^{n},\bm{\psi}_{j}^{n}\big\rira_{\Delta x}\equiv \|\bm{\psi}_j^{0}\|_{l^2}^2$
for $n\ge0$ and $j=1,2,\ldots,N$. We omit the details here for brevity.}

 \subsubsection{Numerical Results}

 In this subsection, we apply the   TSCN-FP schemes proposed in the previous subsection to simulate some interesting dynamics.
For our simulation,   we choose
\begin{equation*}
\beta=1,\quad \Delta t=\textcolor{black}{2\times10^{-4}},\quad \mathcal{D}=[-12, 12]^d, \quad d=1,2.
\end{equation*}
 The potentials and initial data are chosen  as follows:
 \begin{equation*}
 V_j(x)=\pi^2\alpha_j^2\ |x|^2,\quad
 \psi_j^0=\sqrt{a_j/\pi}\ e^{-a_j |x-x^{j}_0|^2}.
 \end{equation*}
 Here, $\alpha_j$ and $x^{j}_0$ are real constants to be given later.  In fact, complete state aggregation and practical aggregation estimates do not depend on the form of the initial data and the relative $L^2$-distances of the initial data play a crucial role. However, when we deal with the center-of-mass $x_c$, we used the Gaussian initial data so that they have the symmetric form (see Remark 4.2 in \cite{B-H-K-T}). For the numerical experiment, we introduce the following quantities (see \cite{B-H-K-T} for details):
 \begin{align*}
& R(t) := \textup{Re} \langle \psi_1,\psi_2\rangle(t),\quad \mathcal R_{ijk\ell} := \frac{(1-h_{ij})(1-h_{k\ell})}{(1-h_{i\ell})(1-h_{kj})}, \\
& \mathcal B := (\beta_{ij})_{1\leq i,j\leq N},\quad x_c^j(t) := \int_{\bbr} x|\psi_j(x,t)|^2 \d x, \\
& \mathcal E[\Psi] := \sum_{j=1}^N \int_{\bbr^d}\left[ \frac12 |\nabla\psi_j|^2 + V_j |\psi_j|^2 + \frac12\sum_{k=1}^N \beta_{jk} |\psi_k|^2 |\psi_j|^2  \right] \d x.
 \end{align*}

\begin{example}
\label{eg:1d-case}
{\em
 Here, we consider the two-component system in 1D, i.e., we take $N=2$ and $d=1$ in \eqref{S-L}.  To this end, we
 take $(x_1^0, x_2^0) = (2.5, -5)$ and consider the following two cases:   for $j=1,2,$
  \begin{itemize}
 \item[]{\bf Case 1.}  fix  $\alpha_j=\beta_{j\ell}=1$  ($\ell=1,2$) and  vary $\kappa=\textcolor{black}{0,2,20}$.
  \item[]{\bf Case 2.} fix $\alpha_j=j$,   $\beta_{12}=\beta_{21}=1$, $\beta_{11}=4\beta_{22}=2$ and  vary $\kappa=\textcolor{black}{0,2,10, 20}$.
 \end{itemize}
 Figure \ref{fig:quant_case1} and Figure \ref{fig:quant_case2} depict  the time evolution of the quantity $1-R(t)$ (where $R(t)$  is the real part of the correlation function $h
 _{12}(t)$), the center of mass  $x_c^j(t)$, the component mass $\|\psi_j\|^2$ and the total energy
 $\mathcal{E}(t)$ for {\bf Case 1} and {\bf Case 2}, respectively. From these  figures and
 other numerical experiments not shown here for brevity, we can see the following observations: \newline
 
\noindent (i). For all cases, we observe that the mass is conserved along time. 

\vspace{0.2cm}

\noindent (ii).  If the Lohe coupling is off, i.e., $\kp=0$,  both the mass and energy are  conserved well, and the center of mass  ($x_c^1(t),x_c^2(t))$  are periodic in time with the same period.  In addition, for the identical case, i.e.,  $\mathcal{B}=J_2$ and $V_1(x)=V_2(x)$, $R(t)$ is conserved for identical case. 

\vspace{0.2cm}

\noindent (iii). If the Lohe coupling is on, i.e., $\kp>0$,  the phenomena become complicated.  The energy is no longer conserved, indeed it decays to some value for large $\kappa$ while it oscillates for small $\kappa$. 

\vspace{0.2cm}

\noindent (iv). Moreover, for the identical case, $R(t)$  converges exponentially to 1, which coincides with the theoretical results.  Thus,   complete state aggregation occurs in this case.  After complete state aggregation, 
 $\|\psi_1(x,t)-\psi_2(x,t)\|_\infty$ will converge to zero and the center of mass  $x_c^1(t)$ and $x_c^2(t)$ will become the same and swing periodically along the line
 connecting $-\bar{x}_c^0$ and $\bar{x}_c^0$ (here, $\bar{x}_c^0:=(x_c^1(0)+x_c^2(0))/2$). 
 
 \vspace{0.2cm}
 
\noindent  (v). Furthermore, for the non-identical case, i.e., $\mathcal{B}\ne J_2$ and $V_1(x)\ne V_2(x)$, $R(t)$  does not
 converge to 1, i.e.,   complete state aggregation cannot occur. However, for large $\kappa$,
 $R(t)$ indeed converges to some definite constant $R_\infty<1$. The larger  $\kappa$, the smaller value $1-R_\infty$.  Meanwhile, $|x_c^1(t)-x_c^2(t)|$ also converges to zero, which could be also justified in a similar process as shown in Corollary 4.1 of \cite{B-H-K-T}.
  }

\begin{figure}[h]
\centering
\emph{(a)
\subfigure{\includegraphics[width=0.3\textwidth]{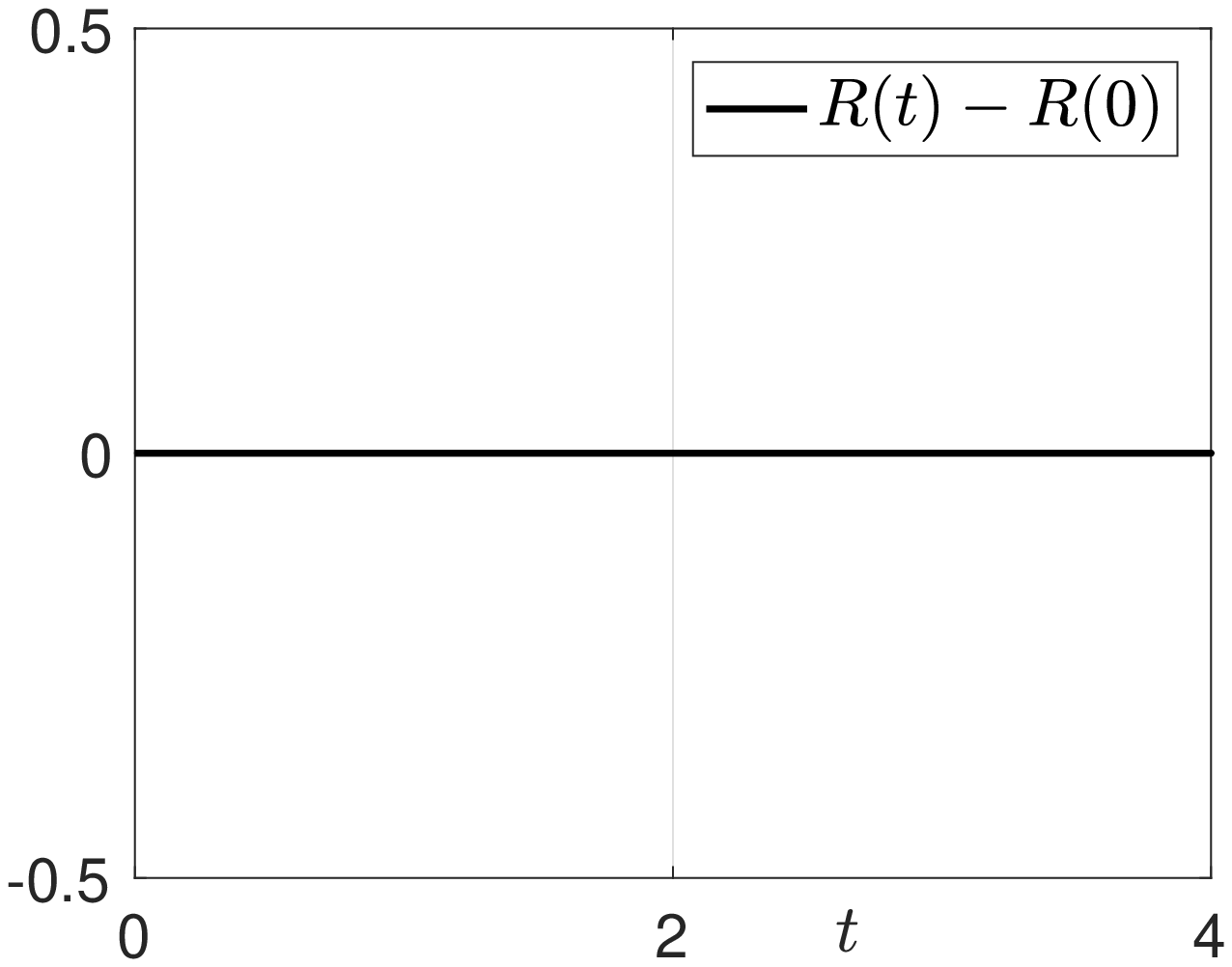}}
\subfigure{\includegraphics[width=0.3\textwidth]{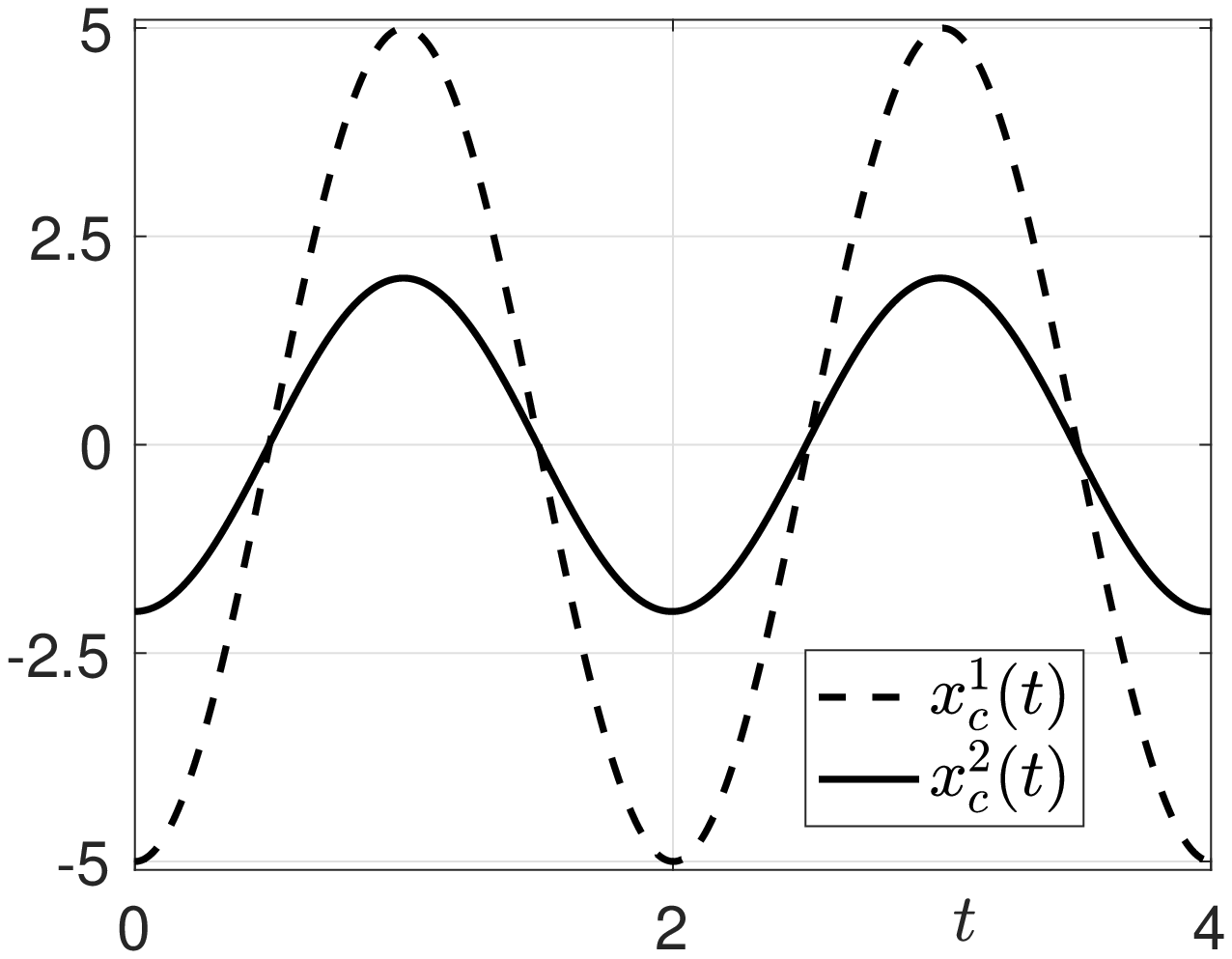}}
\subfigure{\includegraphics[width=0.3\textwidth]{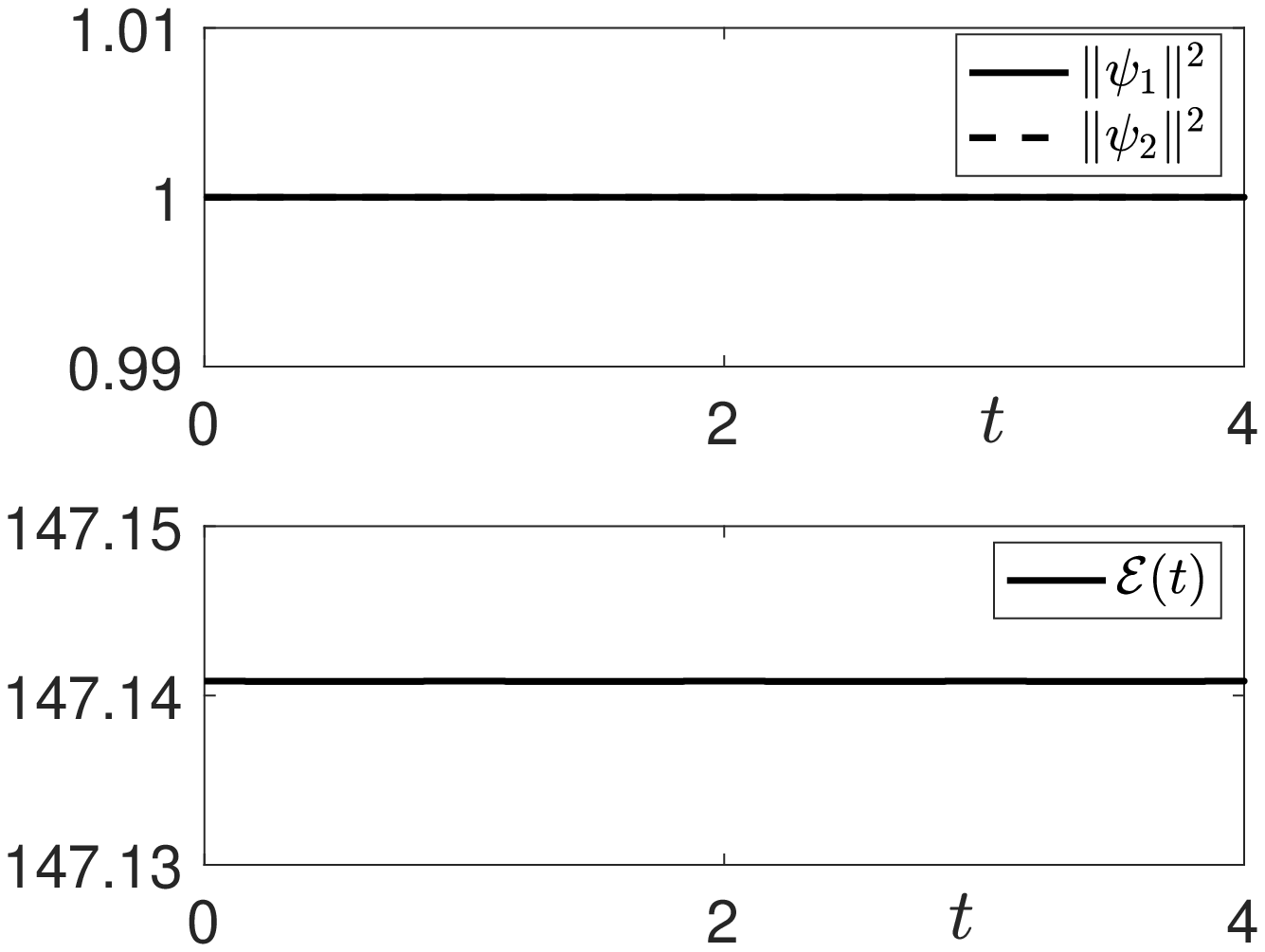}}
}
\centering{\emph{(b)}
\subfigure{\includegraphics[width=0.3\textwidth]{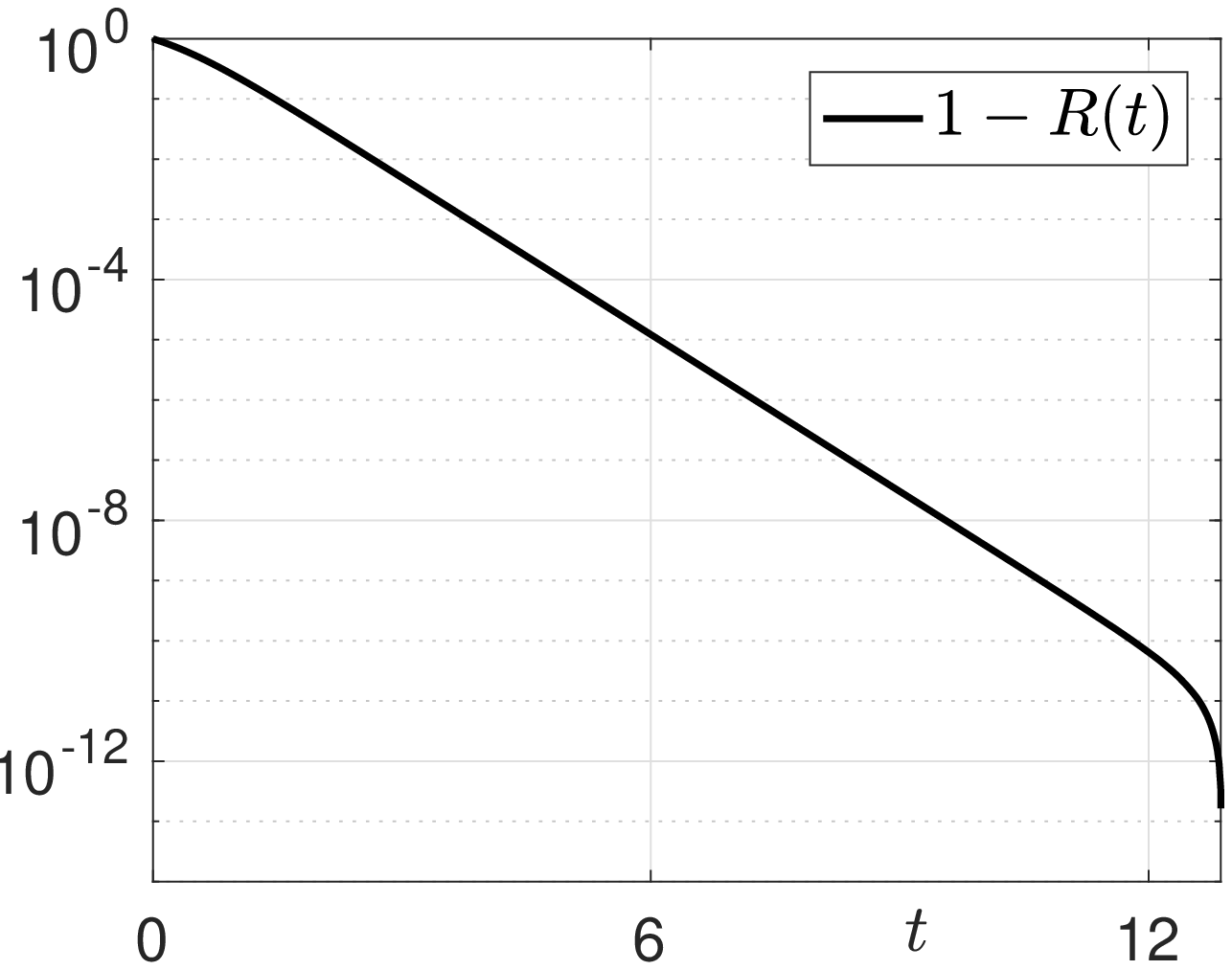}}
\subfigure{\includegraphics[width=0.3\textwidth]{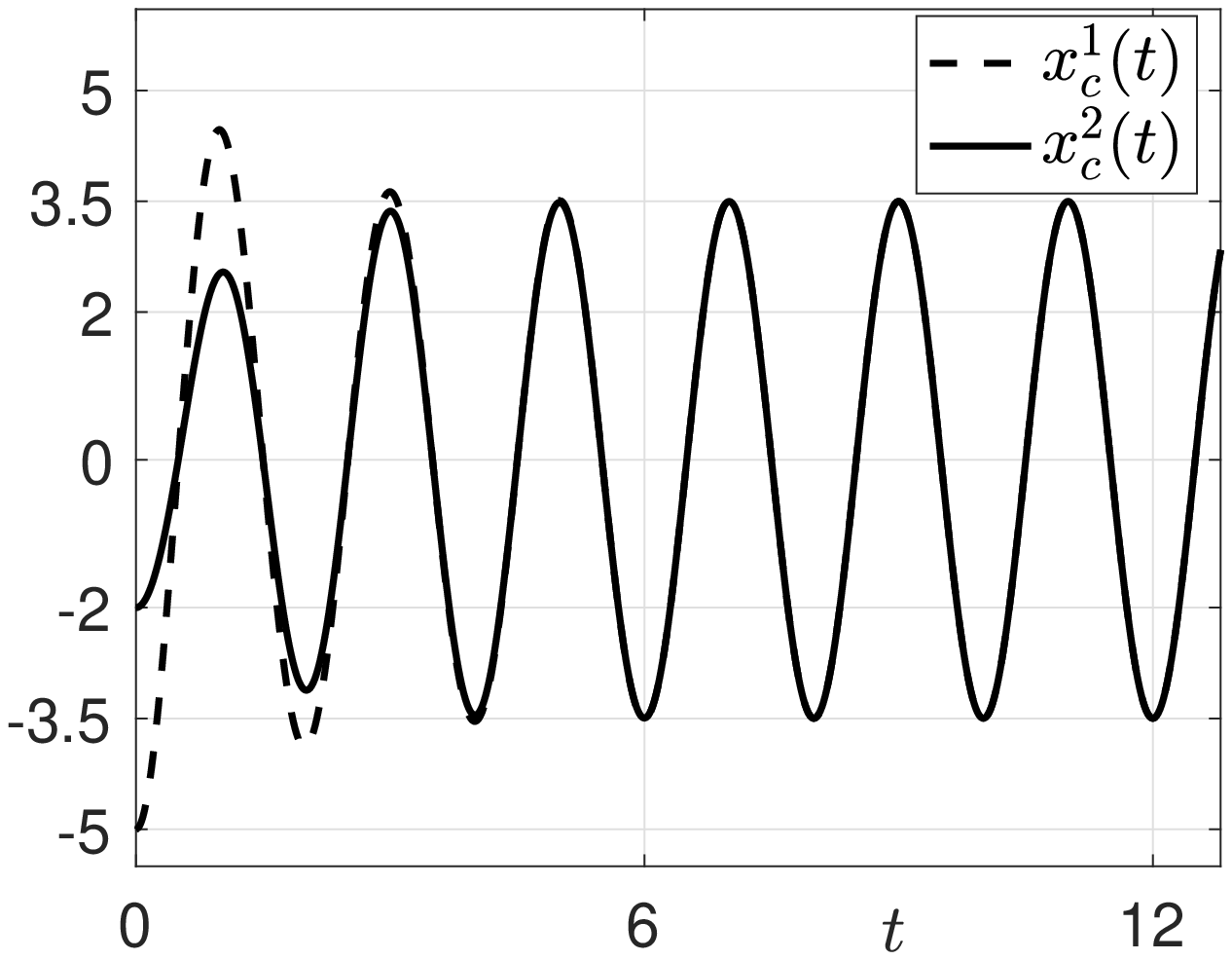}}
\subfigure{\includegraphics[width=0.3\textwidth]{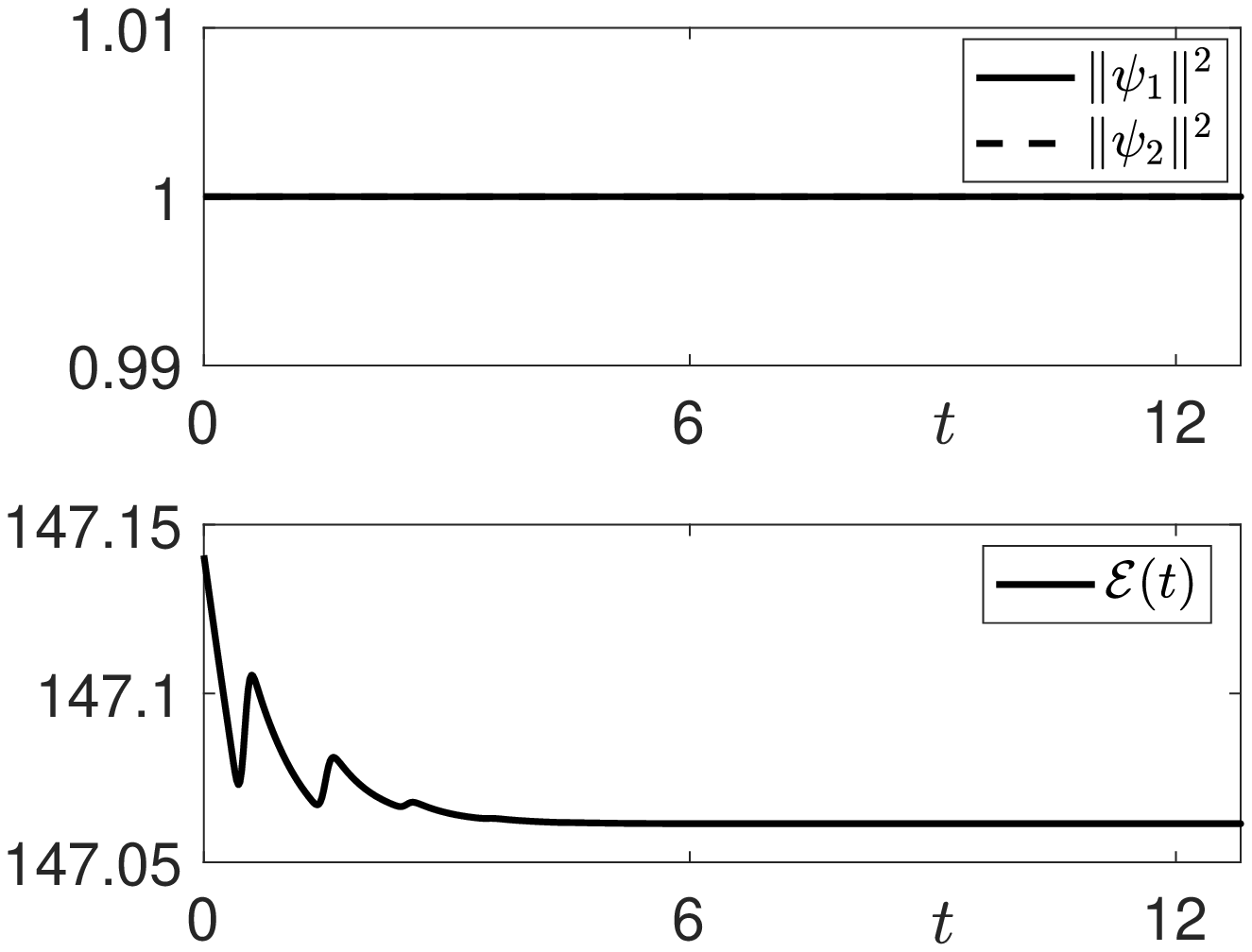}}  }
\centering{\emph{(c)}
\subfigure{\includegraphics[width=0.3\textwidth]{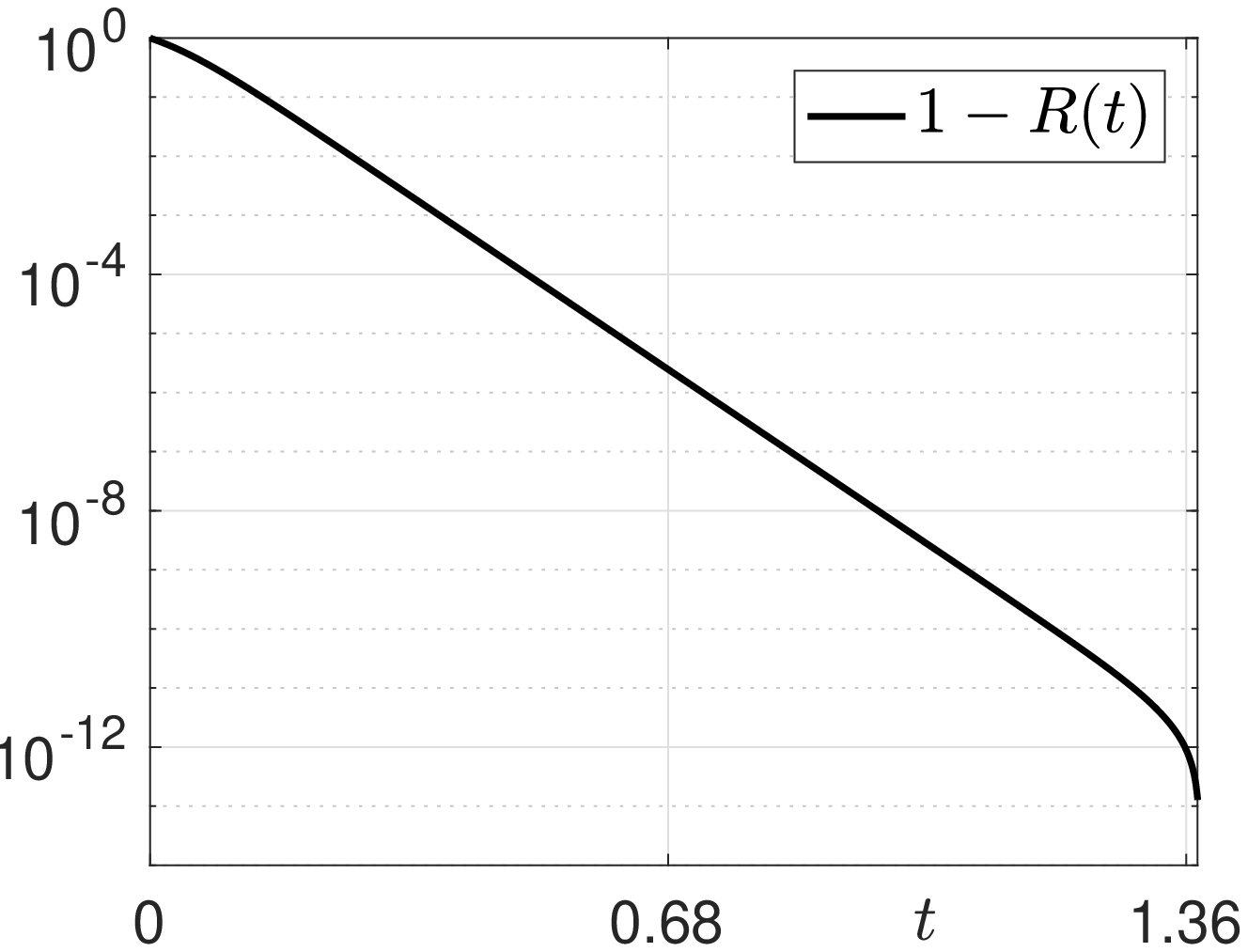}}
\subfigure{\includegraphics[width=0.3\textwidth]{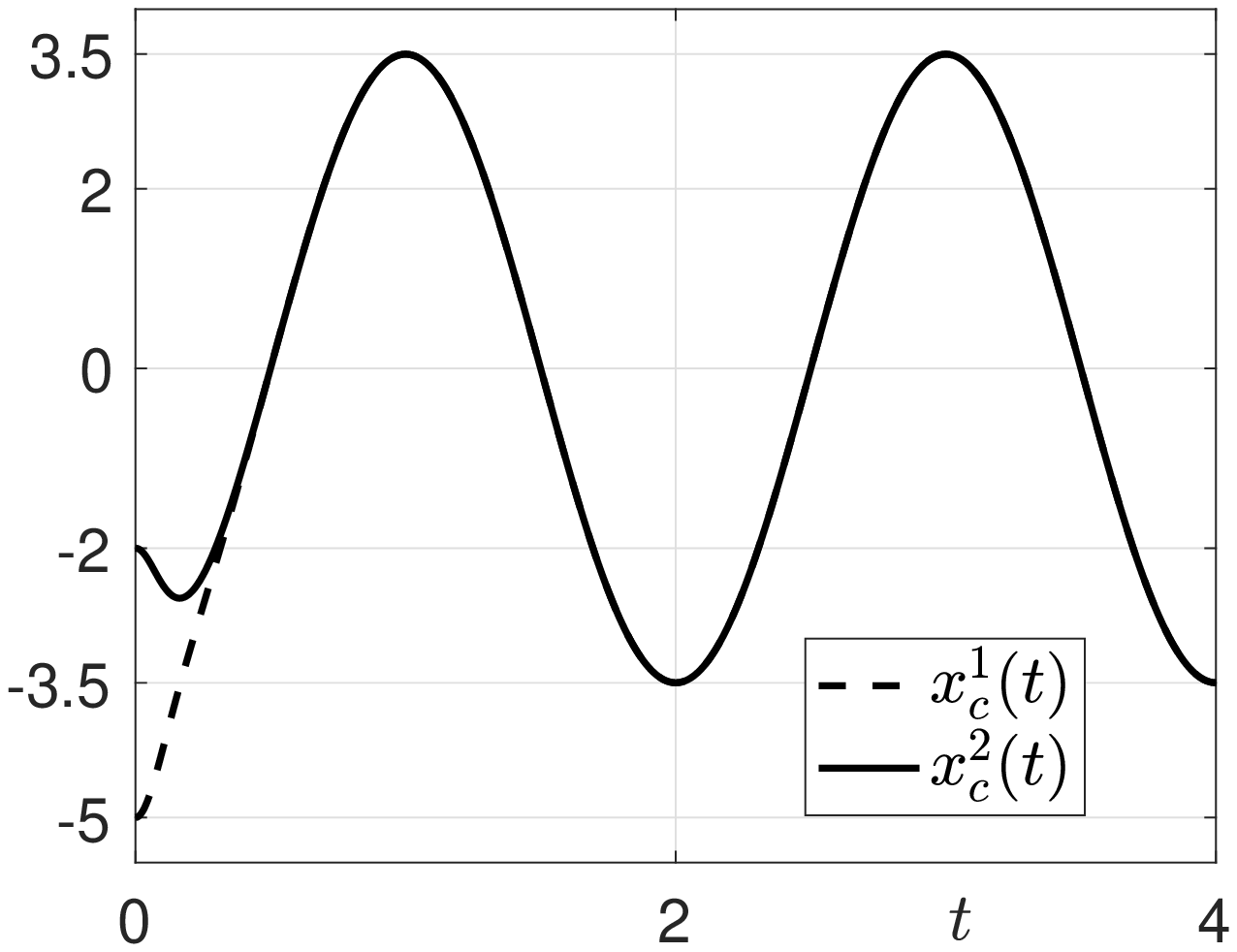}}
\subfigure{\includegraphics[width=0.3\textwidth]{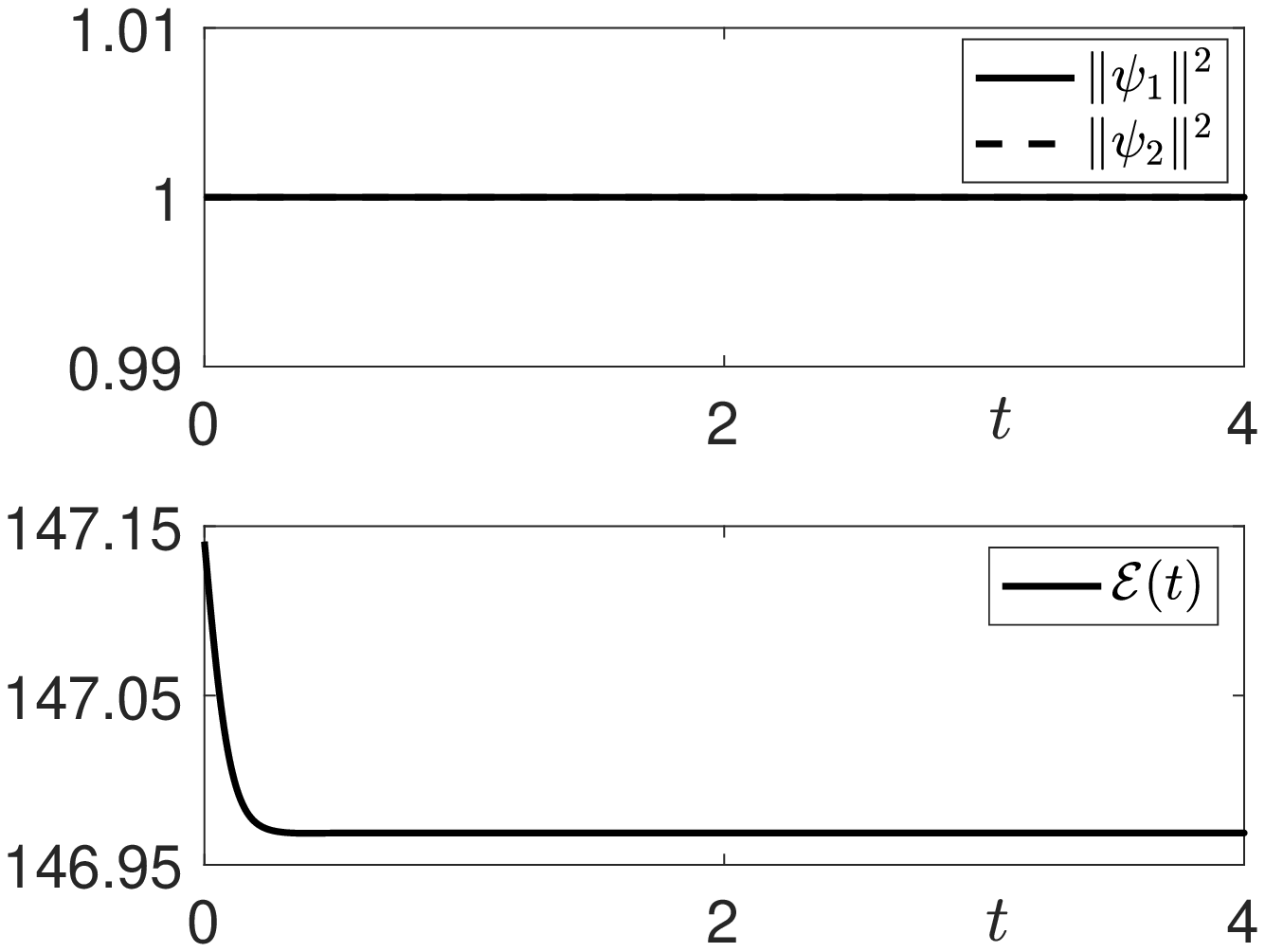}} }
\vspace{-0.3cm}
\caption{Time evolution of the quantity $1-R(t)$ (left), the center of mass  $x_c^j(t)$ (middle),
and  the component mass $\|\psi_j\|^2$ and the total energy  $\mathcal{E}(t)$ (right) for {\bf Case 1} in Example \ref{eg:1d-case}
for  \textcolor{black}{$\kp=0, 2, 20$} (top to bottom).}
\label{fig:quant_case1}
\end{figure}

\begin{figure}[h]
\centering{
\emph{(a)}
\subfigure{\;\,\includegraphics[width=0.3\textwidth]{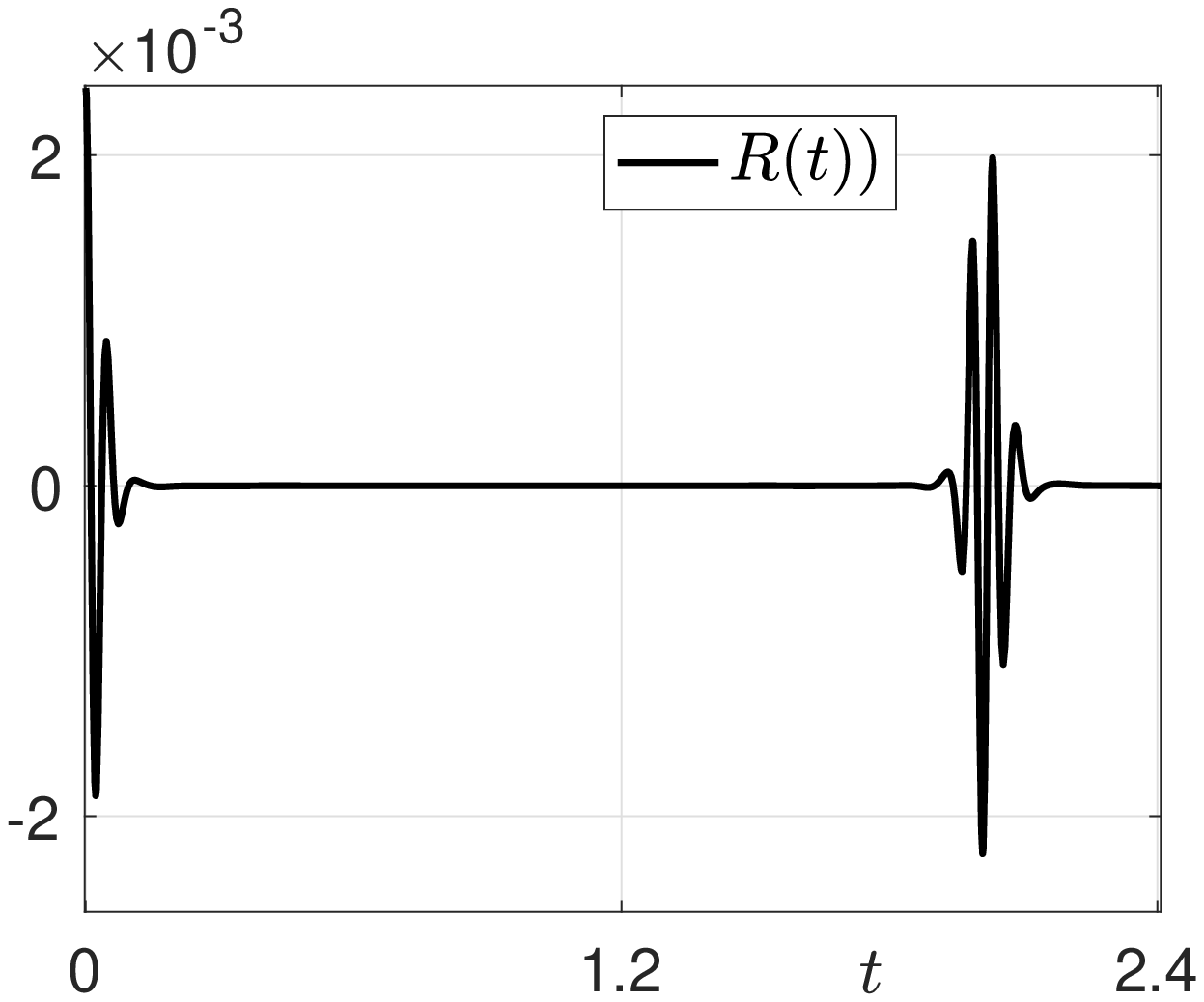}} %\label{fig:2a}
\subfigure{\includegraphics[width=0.3\textwidth]{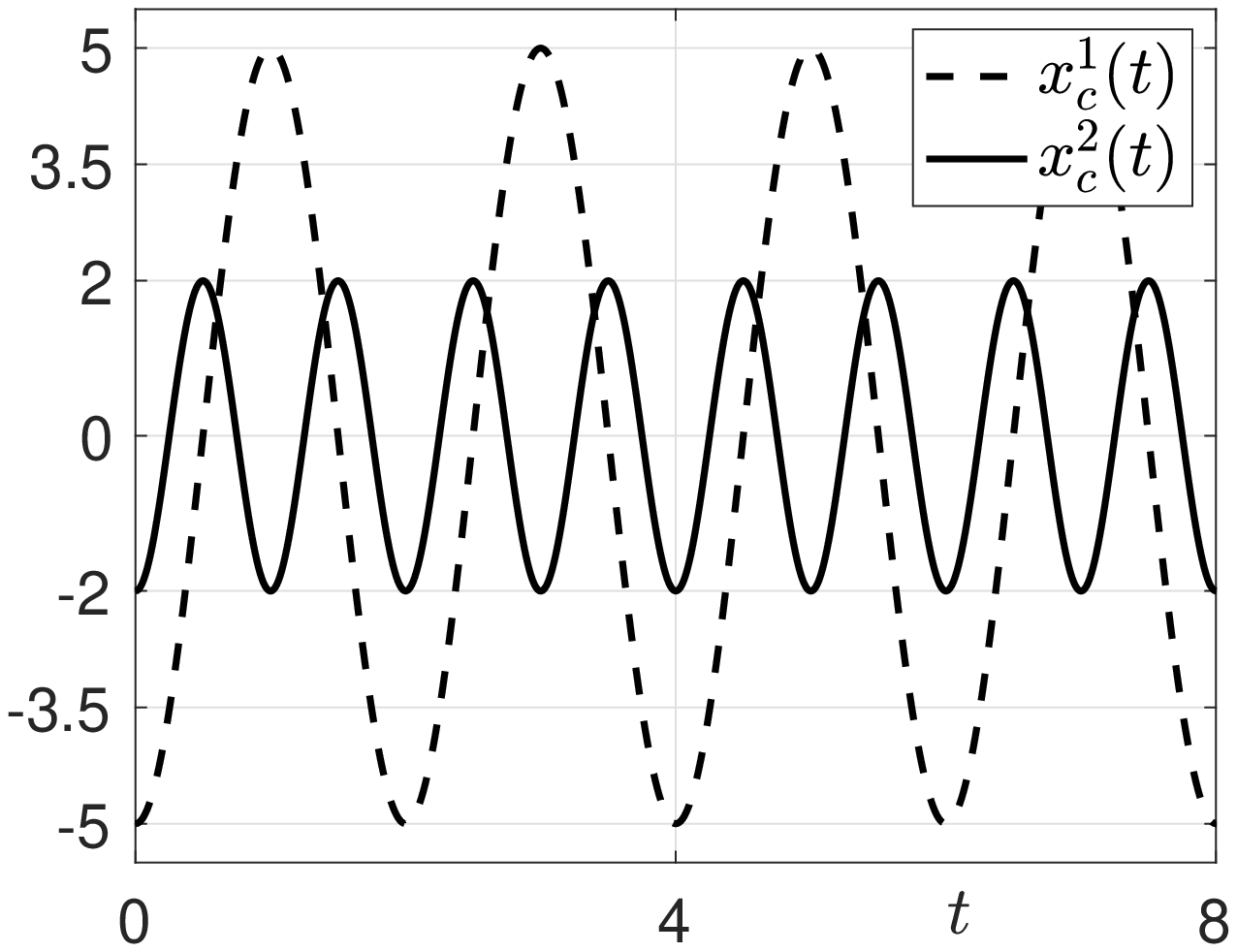}} %\label{fig:2b}
\subfigure{\,\includegraphics[width=0.3\textwidth]{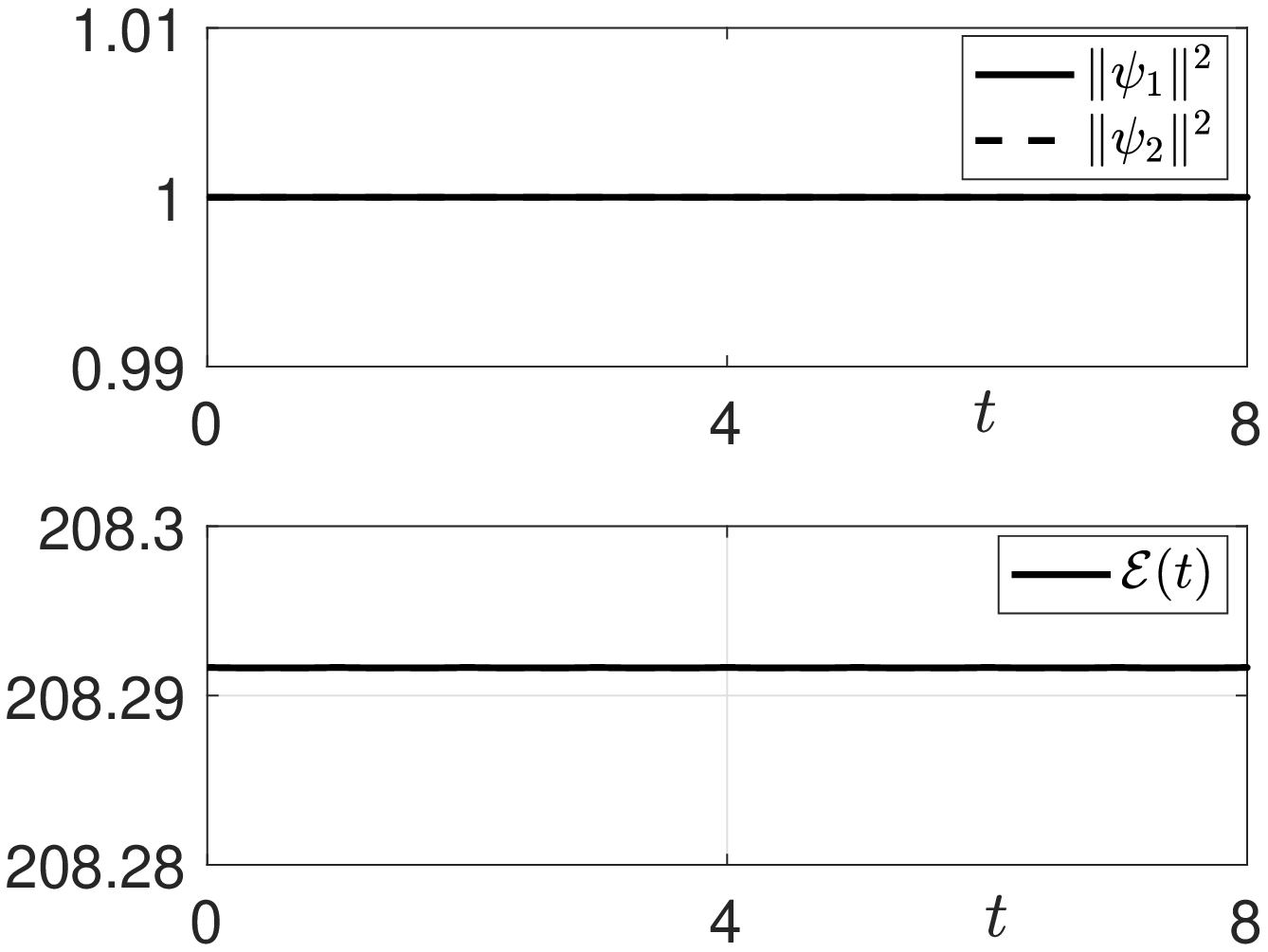}} }%\label{fig:2c} 
\centering{
\emph{(b)}
\subfigure{\includegraphics[width=0.3\textwidth]{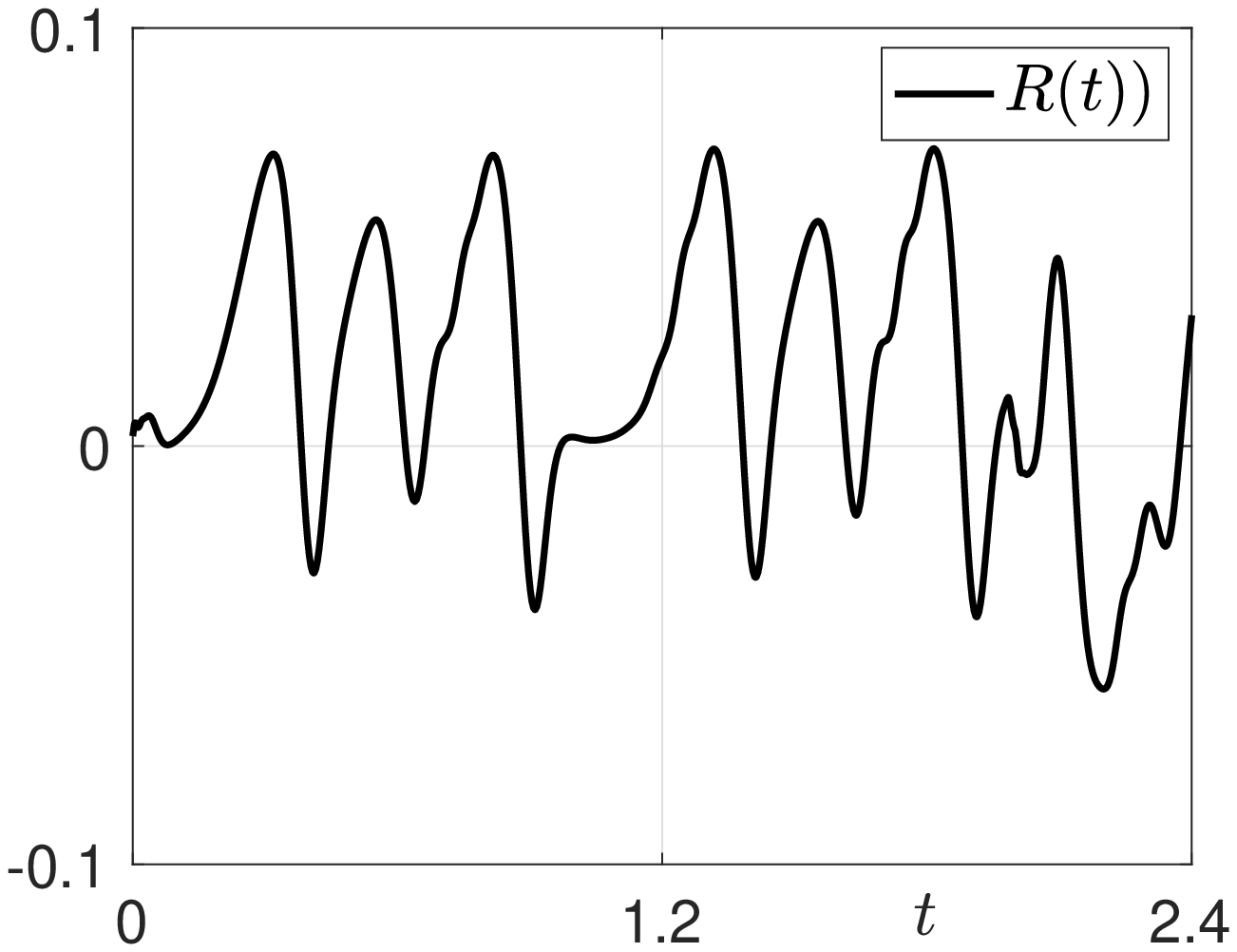}} %\label{fig:2d}
\subfigure{\includegraphics[width=0.3\textwidth]{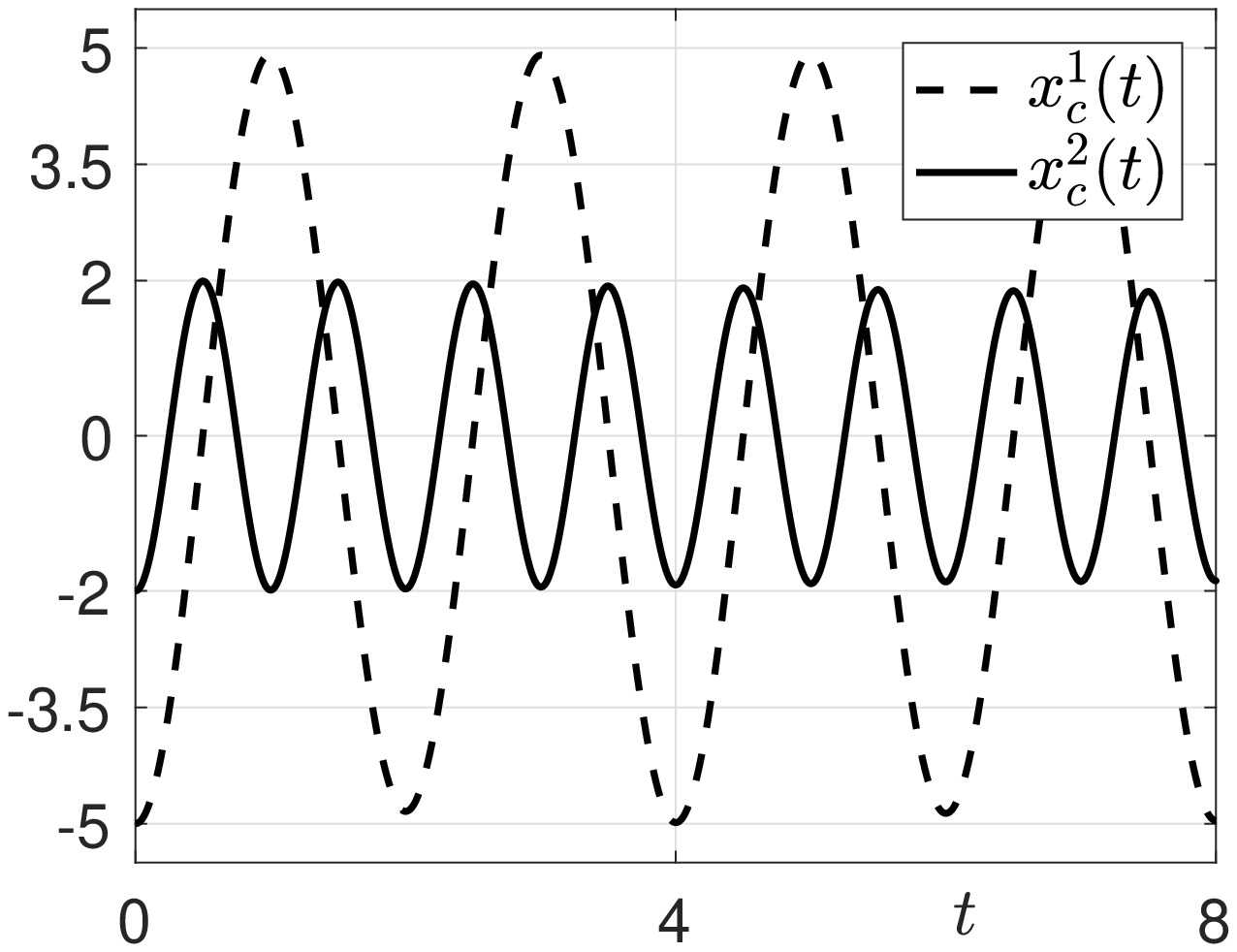}} %\label{fig:2e}
\subfigure{\includegraphics[width=0.3\textwidth]{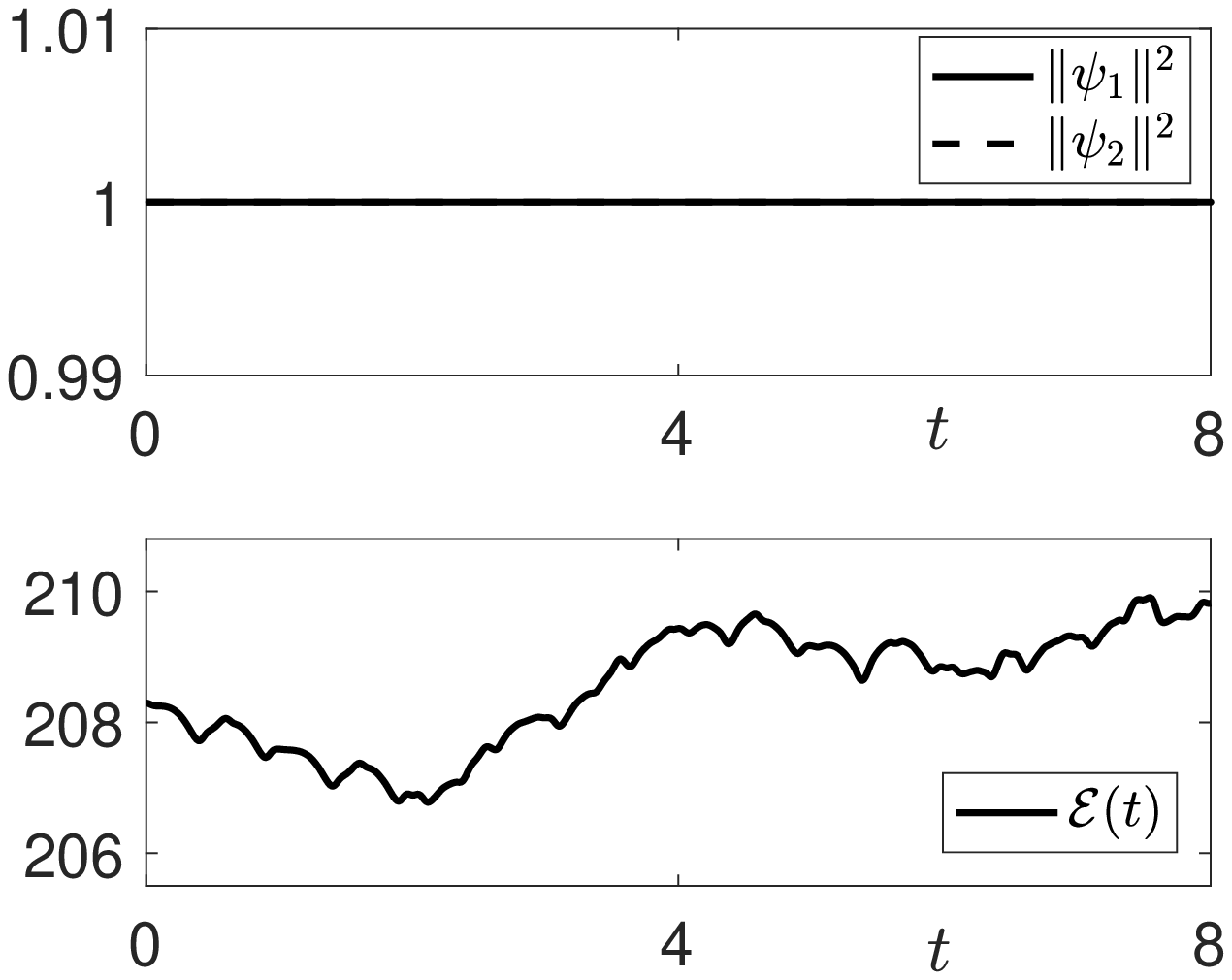}}
}
\centering{
\emph{(c)}
\subfigure{\includegraphics[width=0.3\textwidth]{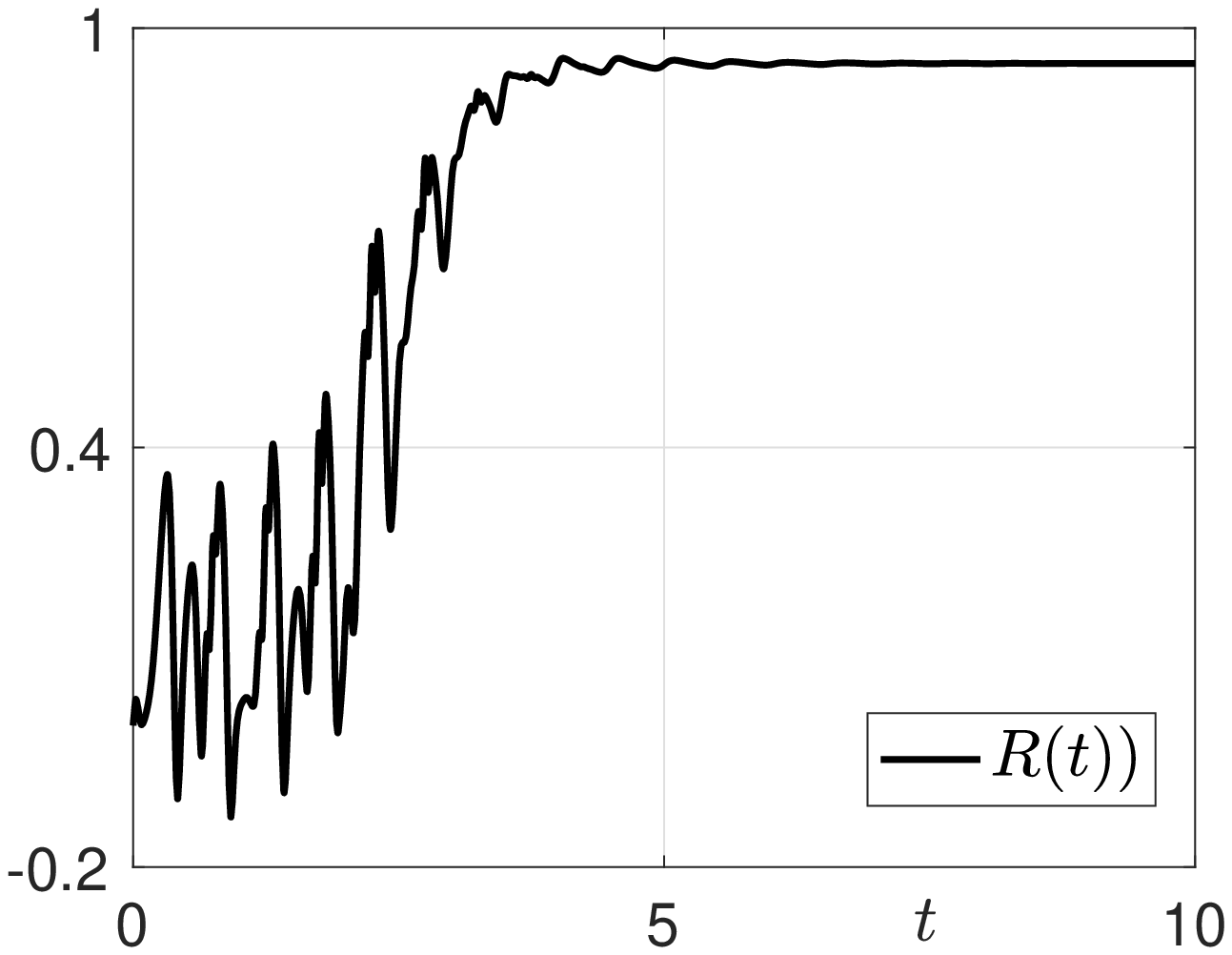}} 
\subfigure{\includegraphics[width=0.3\textwidth]{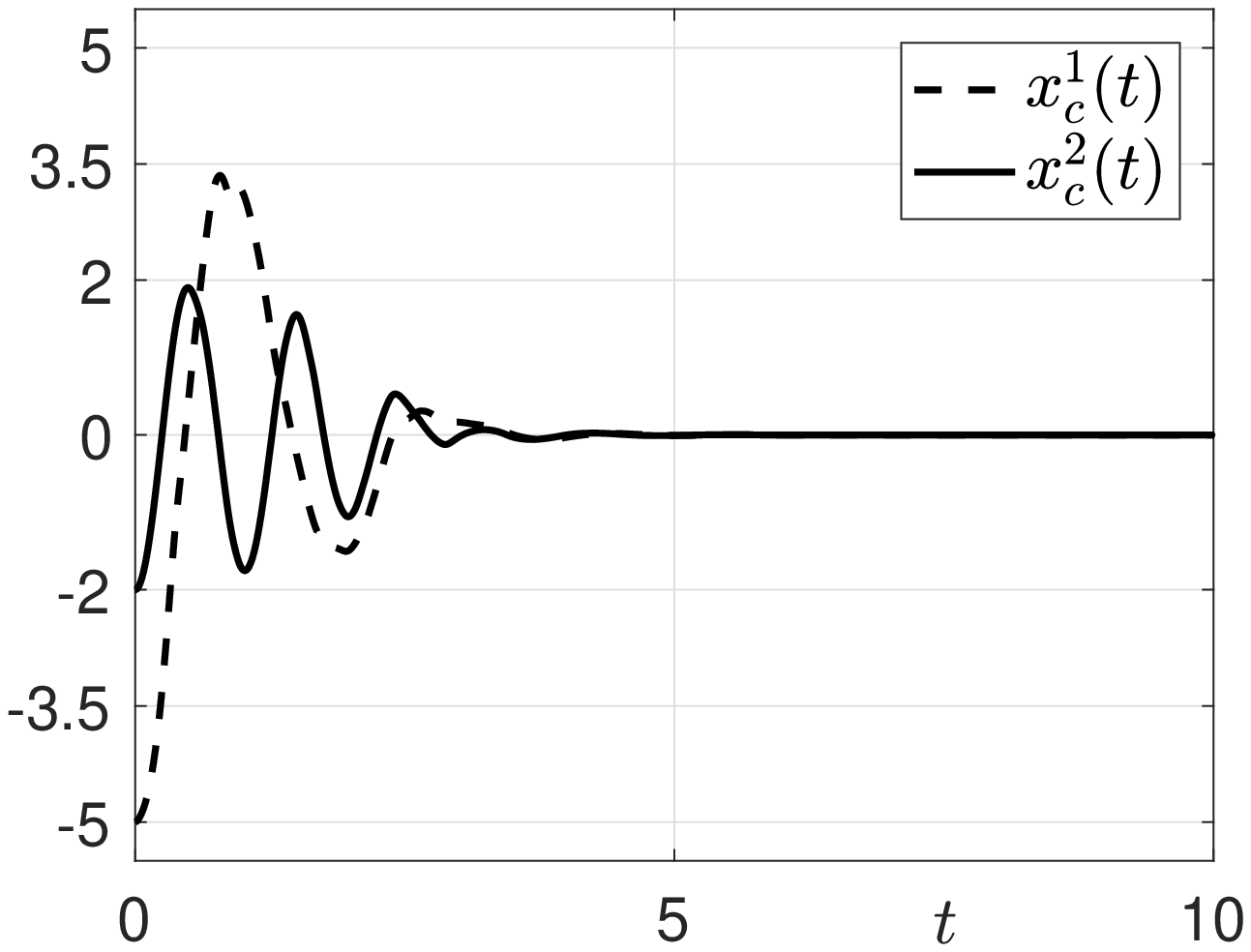}}
\subfigure{\,\includegraphics[width=0.3\textwidth]{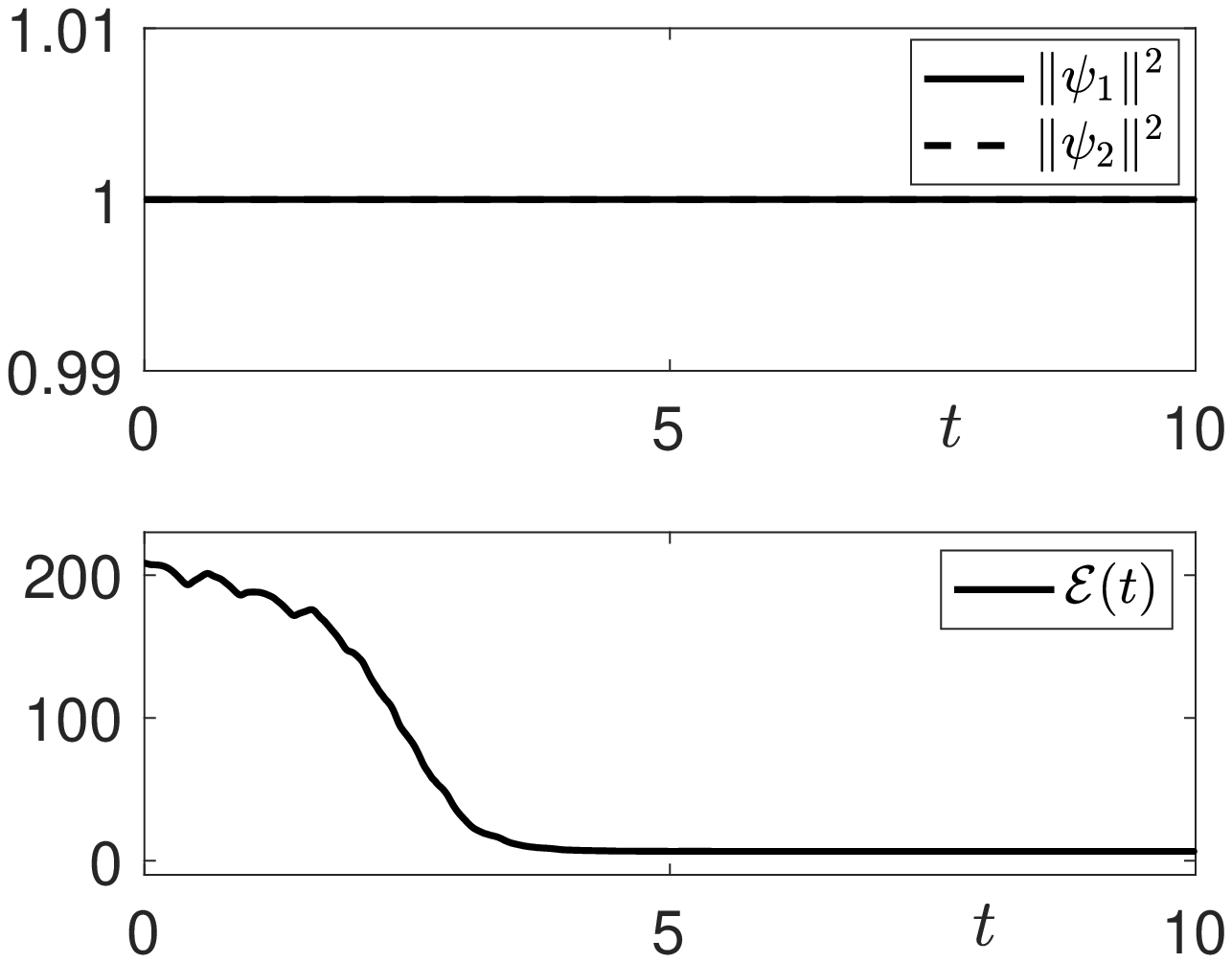}} 
}
\centering{
\emph{(d)}
\subfigure{\includegraphics[width=0.3\textwidth]{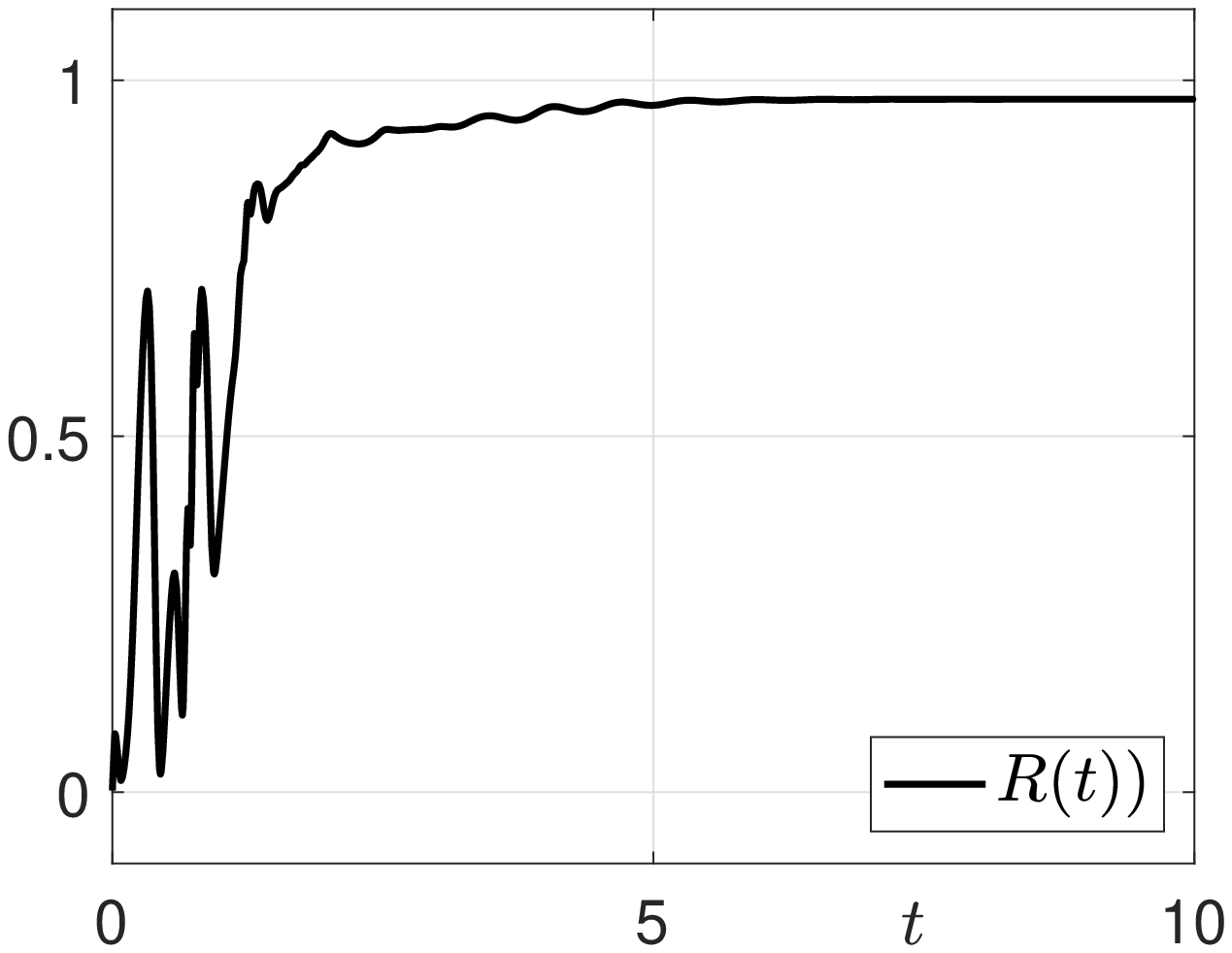}}
\subfigure{\includegraphics[width=0.3\textwidth]{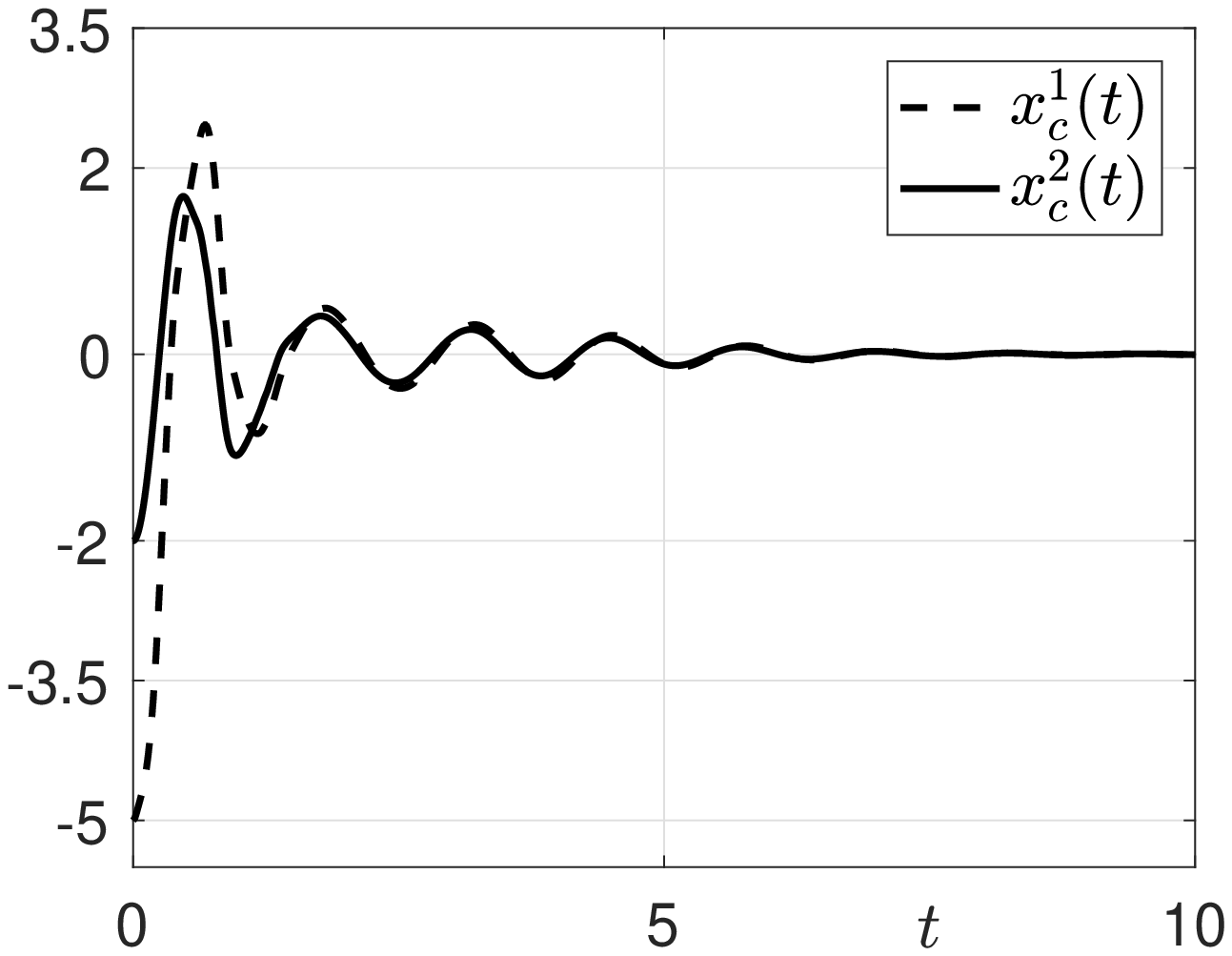}} 
\subfigure{\,\includegraphics[width=0.3\textwidth]{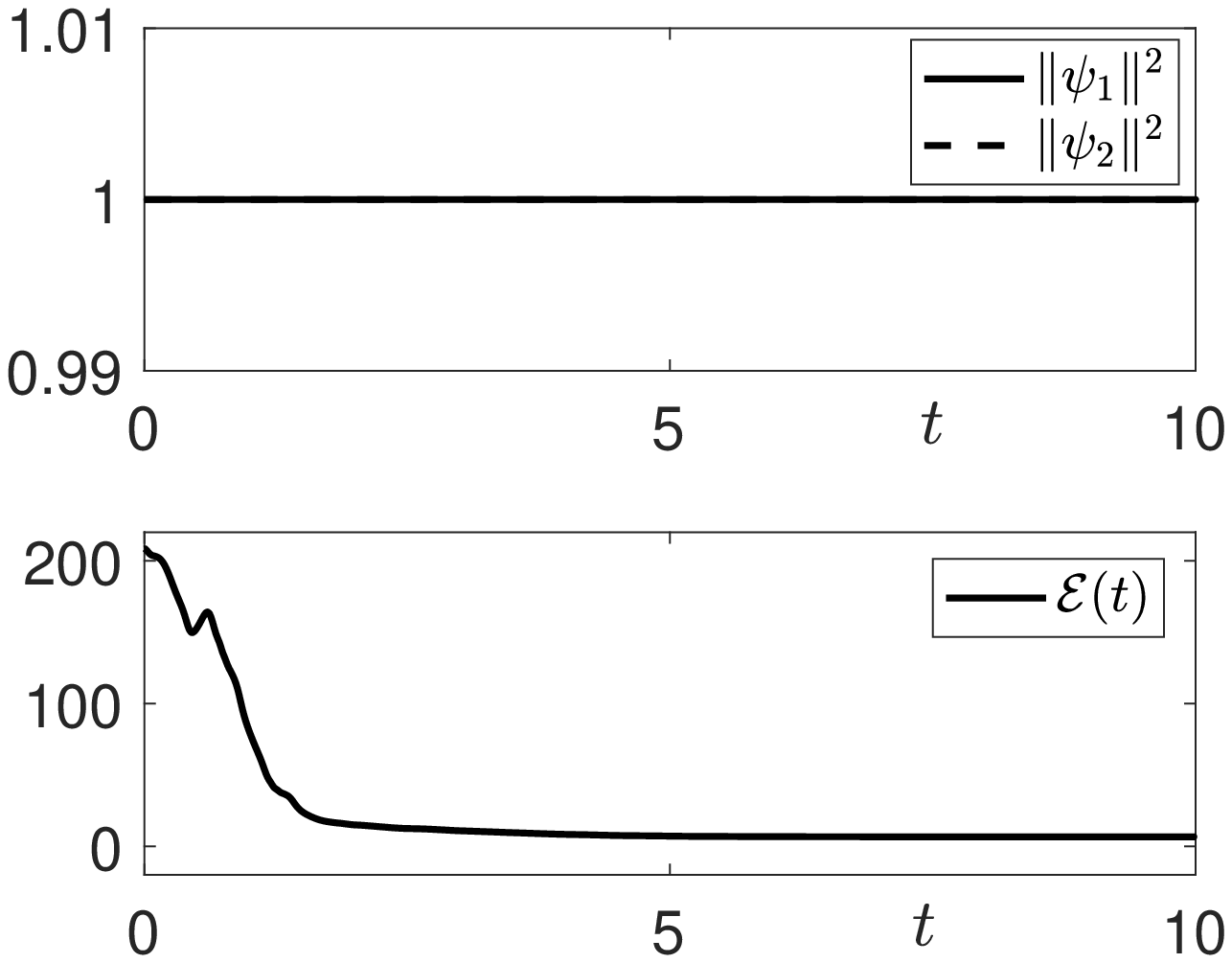}} 
}
\vspace{-0.3cm}
\caption{Time evolution of the quantity $1-R(t)$ (left), the center of mass  $x_c^j(t)$ (middle),
and  the component mass $\|\psi_j\|^2$ and the total energy  $\mathcal{E}(t)$ (right) for {\bf Case 2} in Example \ref{eg:1d-case}
for   \textcolor{black}{$\kp=0, 2, 10, 20$} (top to bottom). }
 \label{fig:quant_case2}
\end{figure}
 \end{example}

\begin{example}
\label{eg:2d-case}{\em
Here, we consider the  six-component system in 2D, i.e., we take $N=6$ and $d=2$ in \eqref{S-L}. To this end,  we here only consider the
identical case, i.e.,  we choose $\alpha_j=1=\beta_{j\ell}=1$ ($j,\ell=1,\cdots,6$ ). Let  \textcolor{black}{$\kappa=20$}, we
 consider   four cases of initial setups:
\begin{itemize}
\item[]{\bf Case 3.} $x_0^j=\big(6\, \cos((j-1)\pi/3), 6\, \sin((j-1)\pi/3)\big)$, \quad $j=1,\cdots,6$.
\item[]{\bf Case 4.} $x_0^j=\big(2+4\, \cos(j\pi/3-\pi/12), 2+4\, \sin(j\pi/3-\pi/12)\big)$, \quad  $j=1,\cdots,6$.
\item[]{\bf Case 5.} $x_0^j=\big(6\, \cos((j-1)\pi/5), 6\, \sin((j-1)\pi/5) \big)$, \quad  $j=1,\cdots,6$.
\item[]{\bf Case 6.} Random location:
\bea\nonumber
&&x_0^1=(3.4707, 2.7526 ), \quad x_0^2=(-0.8931, 1.9951), \\
\nonumber
&& x_0^3=(0.1809, -1.1538), \quad x_0^4=(0.0937, -5.8995 ), \\
\nonumber
&& x_0^5=(-2.9235, -2.4171),\quad  x_0^6=(-3.6423, 4.3714).
\eea
\end{itemize}
\textcolor{black}{For  {\bf Case 3-Case 6},  Figure \ref{fig:quant_case3to6} illustrates the trajectory and time evolution of the center of mass $x_c^j(t)=:(x_{c1}^j(t),  \,x_{c2}^j(t))$,  Figure \ref{fig:dyn_R_ijkl} depicts the time evolution of $|\mathcal{R}_{1256}(t)-\mathcal{R}_{1256}(0)|$, $|\mathcal{R}_{2456}(t)-\mathcal{R}_{2456}(0)|$ and  $|\mathcal{R}_{3456}(t)-\mathcal{R}_{3456}(0)|$, }and
 Figure \ref{fig:density_case3to6} shows the contour plots of  $|\psi_1(x,t)|^2$  at different times. 
From these figures and other numerical experiments not shown here for brevity, we can see the following observations. \newline

\noindent (i). Complete state aggregation occurs for all cases. 

\vspace{0.2cm} 
\noindent (ii). All the center of mass  $x_c^j(t)$ ($j=1,\cdots,6$) will converge to the same periodic function $\bar{x}_c(t)$,  which swings exactly along the line connecting the points ($-\bar{x}_{c1}^0, -\bar{x}_{c2}^0$) and ($\bar{x}_{c1}^0, \bar{x}_{c2}^0$ which are defined as the average of the initial center of mass of the six oscillators:
$$
\left(\bar{x}_{c1}^0, \bar{x}_{c2}^0\right):=\left(\frac16\sum_{j=1}^{6} x^j_{c1}(0), \frac16\sum_{j=1}^{6} x^j_{c2}(0)\right).
$$
Thus, when $\bar{x}_{c1}^0=\bar{x}_{c2}^0=0$, the center of mass will stay steady at
the origin (cf. Figure \ref{fig:quant_case3to6} (a)), which also agrees with the conclusion in Remark 4.2 of \cite{B-H-K-T}.  \newline

\noindent \textcolor{black}{(iii)
Before complete state aggregation, all density profiles $|\psi_j(x,t)|^2$ ($j=1,\cdots,6$) will evolve similarly, i.e., the same dynamical pattern  as those shown in Figure \ref{fig:density_case3to6} for $|\psi_1|^2$ (only differ from the `color', i.e., the more blurred humps imply the centers of the other five component, while the lighter one shows the one of the current component). While after complete state aggregation (around $t=0.4$,  which corresponds  to the moment the  center of mass $x_c^j$ ($j=1,\cdots,6$)  meet together in Figure \ref{fig:quant_case3to6}), all $\psi_j(x,t)$ (hence also for all density profiles)  will converge to the same function,  whose density changes periodically in time (as shown in columns 4--6 in Figure \ref{fig:density_case3to6}, which also indicate the periodic dynamics for the center of mass that illustrated in Figure \ref{fig:quant_case3to6}).
In addition, before complete state aggregation, although  numerical schemes cannot conserve  the cross-ratio like quantities $\mathcal{R}_{ijkl}(t) (1\le i, j, k, l \le 6)$  in discretized level, the difference of  those quantities from their initial ones are still small (cf. Figure \ref{fig:dyn_R_ijkl}).}
}
\end{example}

\begin{figure}[h]
\centering{
\mbox{(a)
\subfigure{\includegraphics[width=0.23\textwidth]{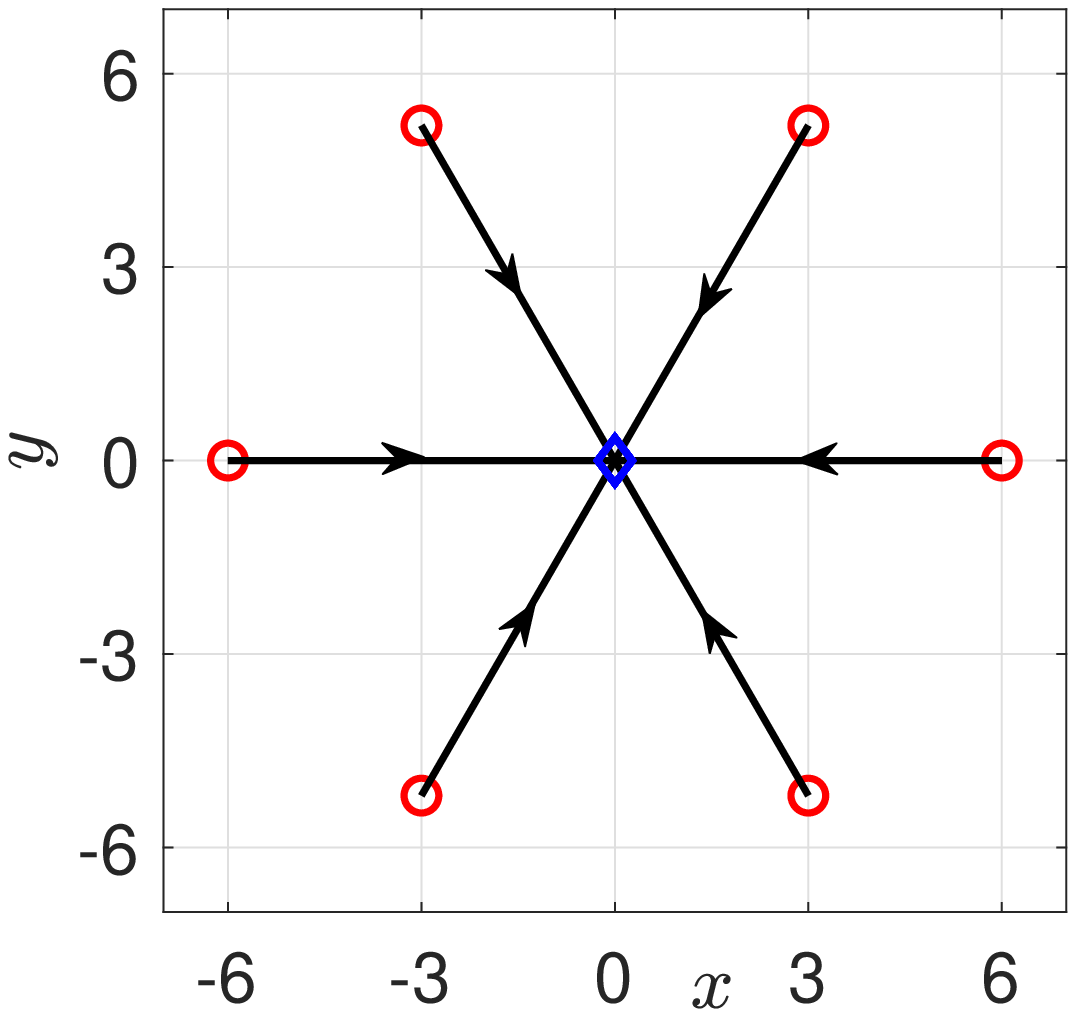}}
\subfigure{\includegraphics[width=0.23\textwidth]{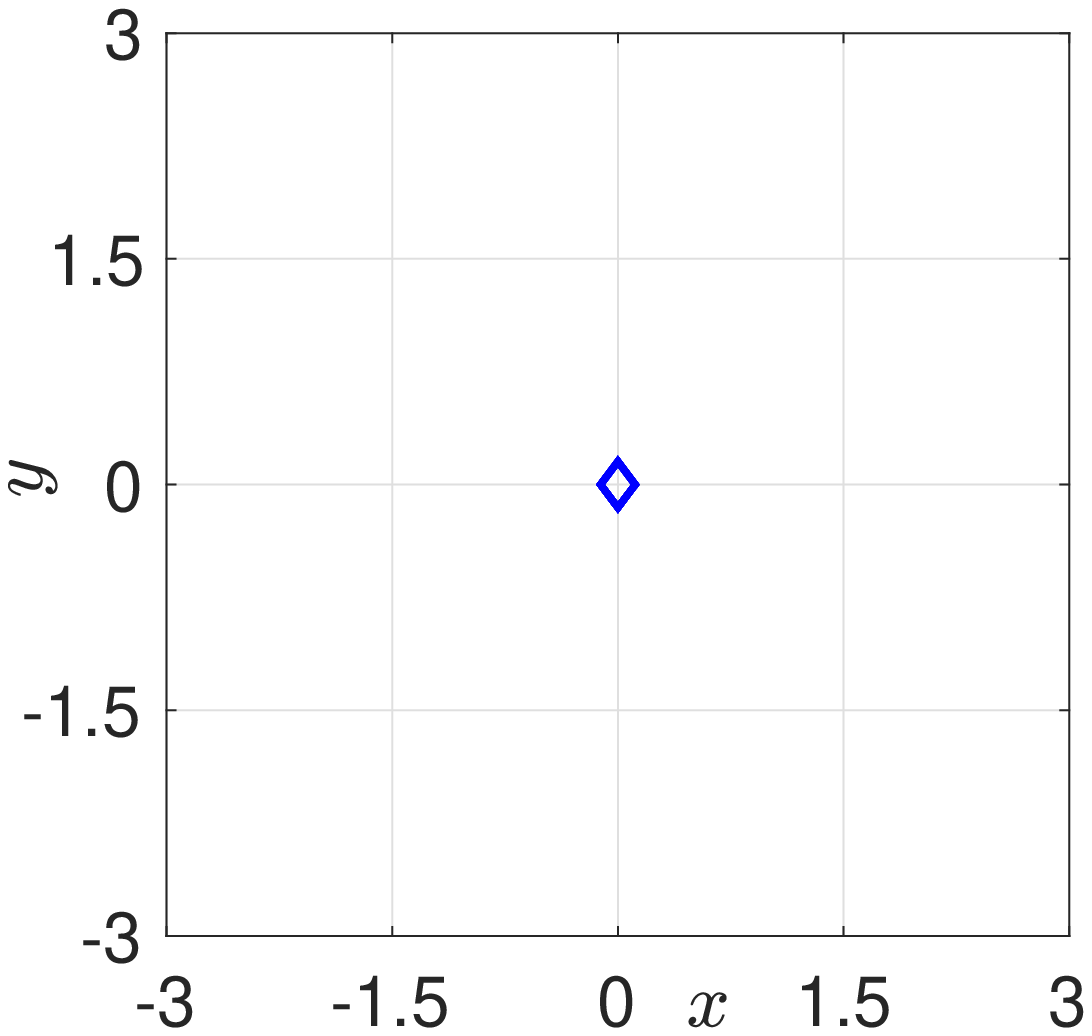}}
\subfigure{\includegraphics[width=0.45\textwidth]{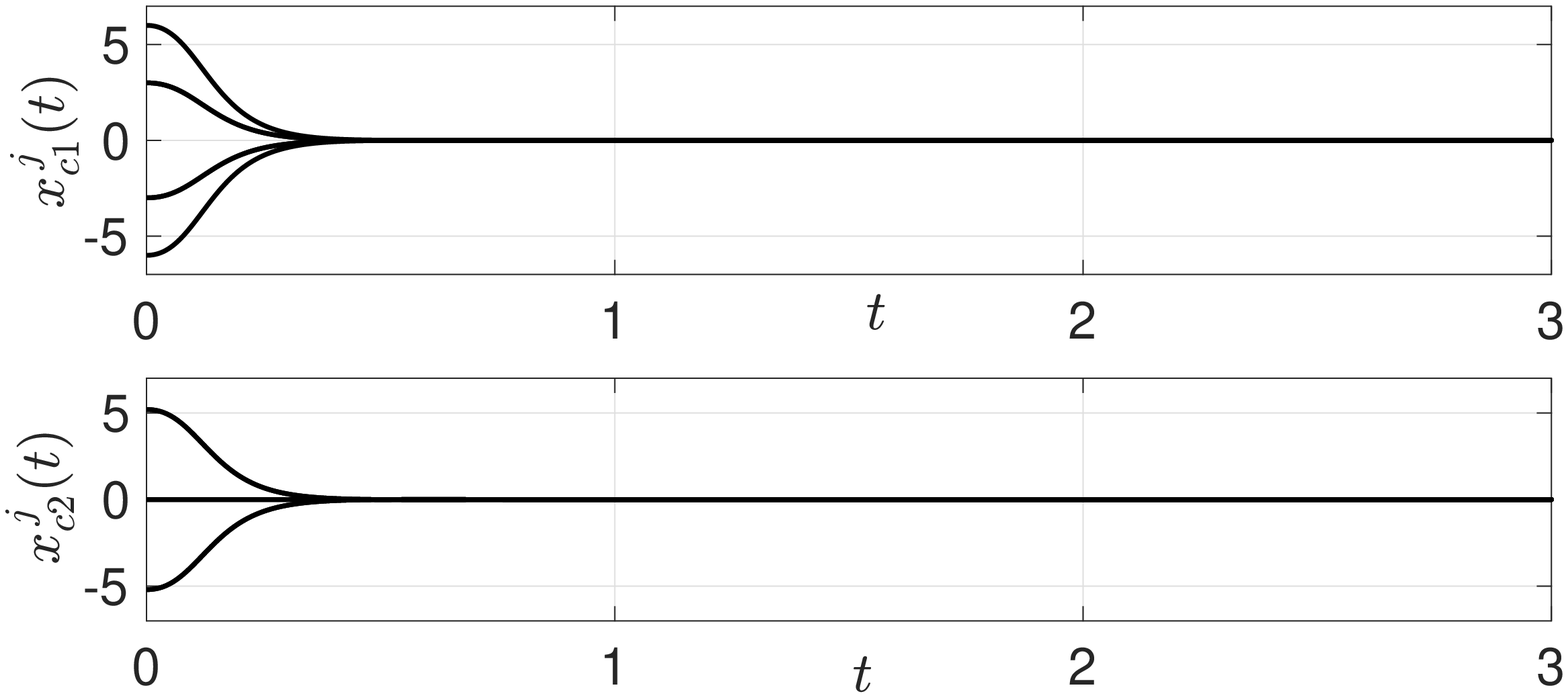}}
}
\mbox{(b)
\subfigure{\includegraphics[width=0.23\textwidth]{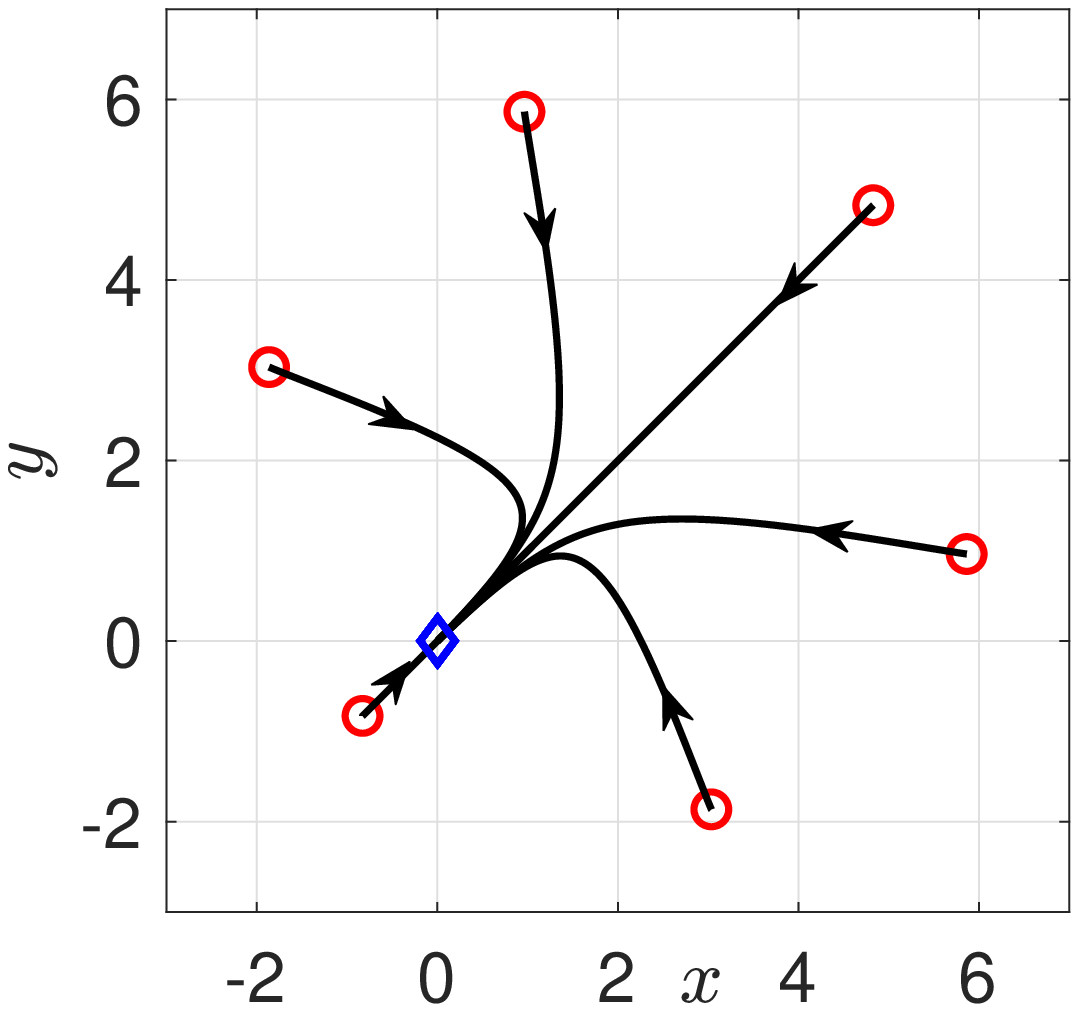}}
\subfigure{\includegraphics[width=0.23\textwidth]{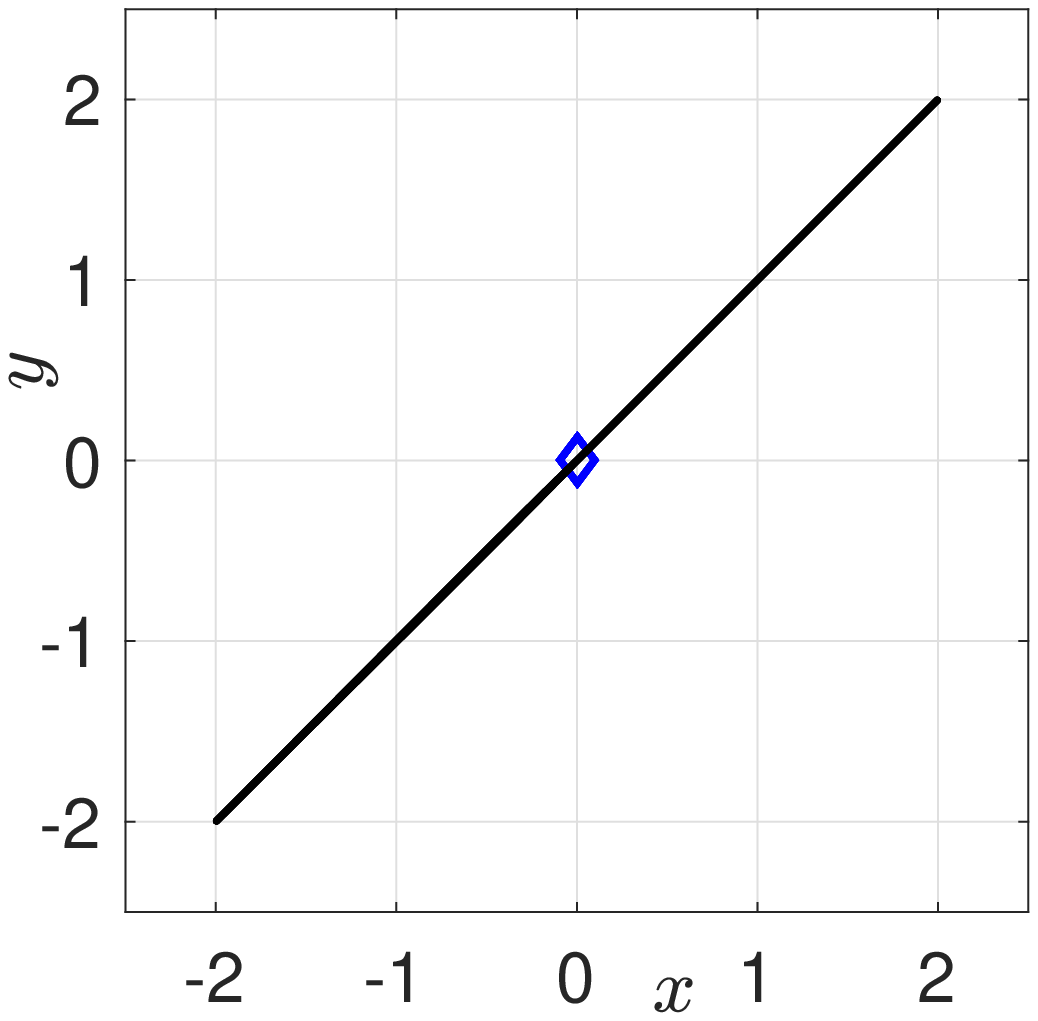}}
\subfigure{\includegraphics[width=0.45\textwidth]{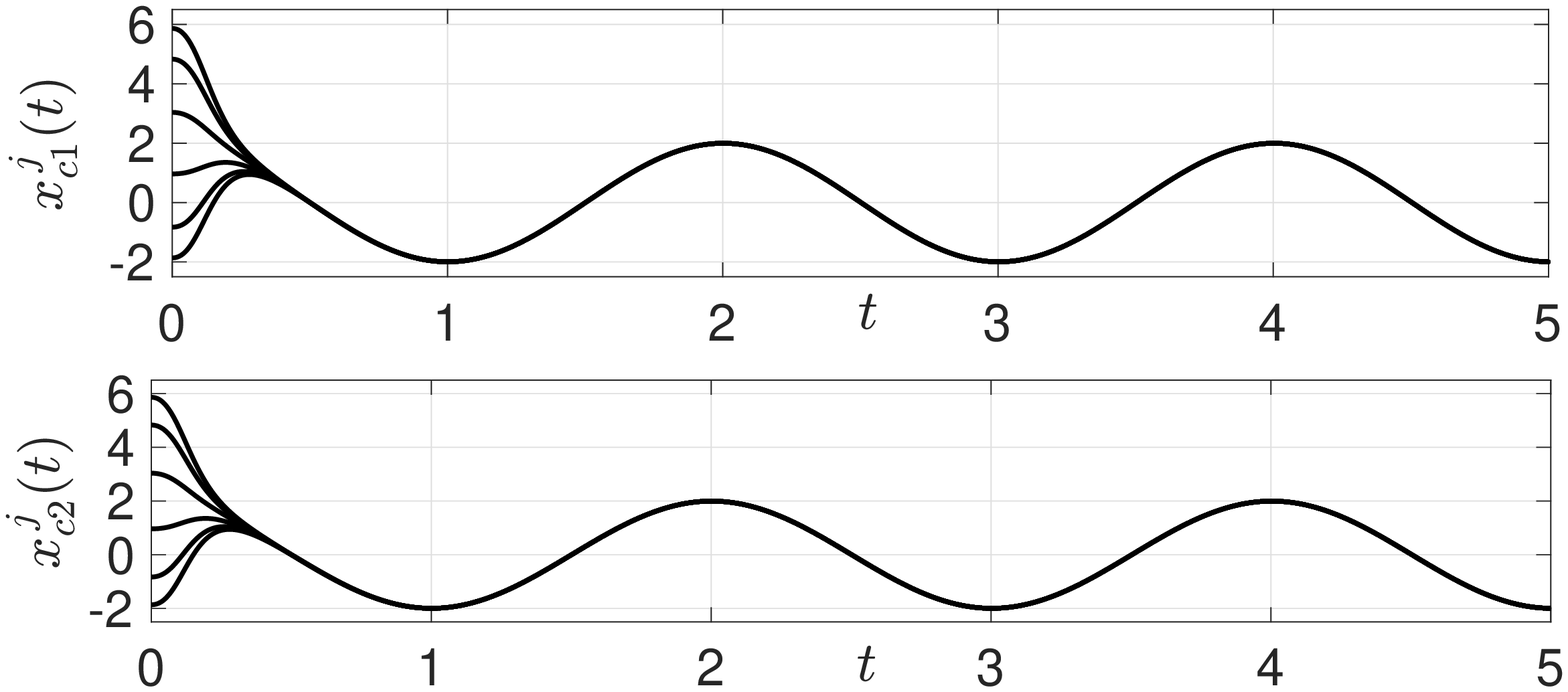}}
}
\mbox{(c)
\subfigure{\includegraphics[width=0.23\textwidth]{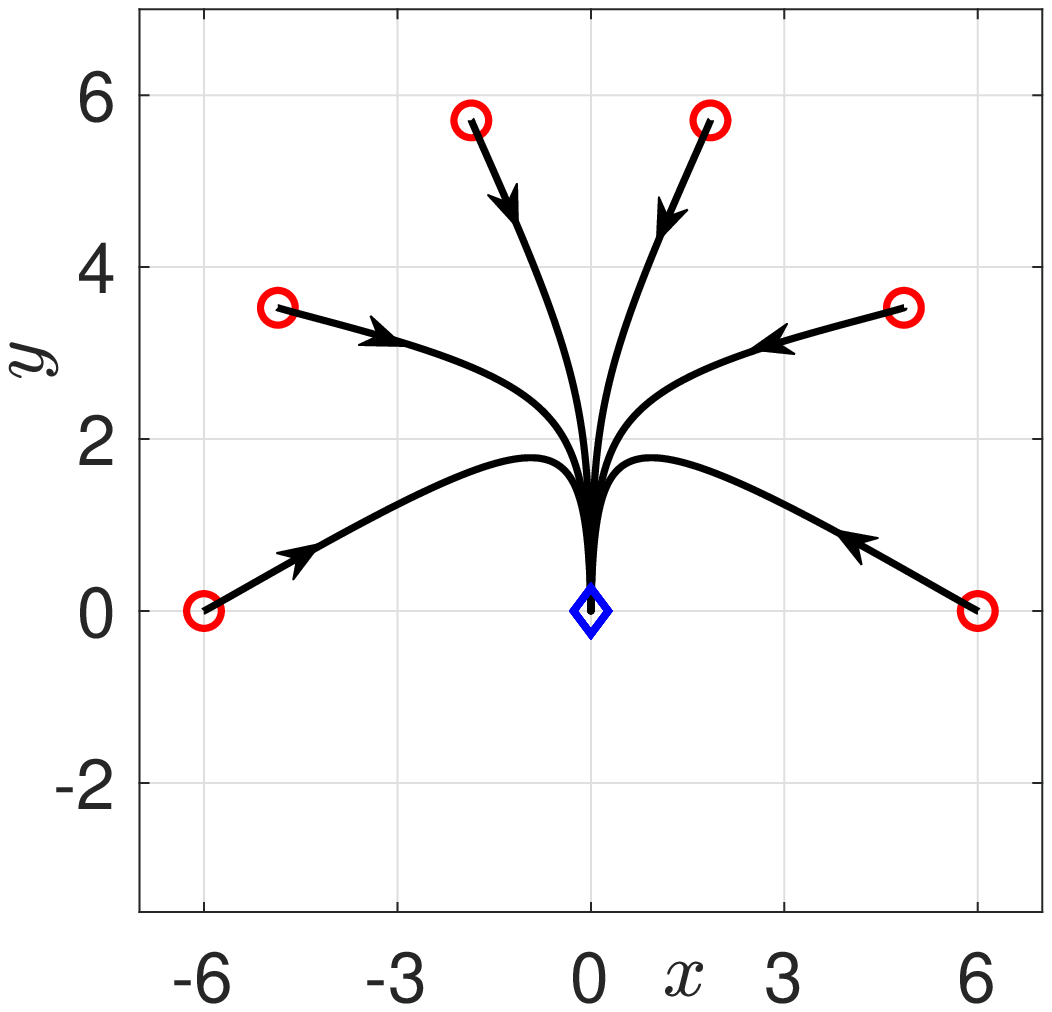}}
\subfigure{\includegraphics[width=0.23\textwidth]{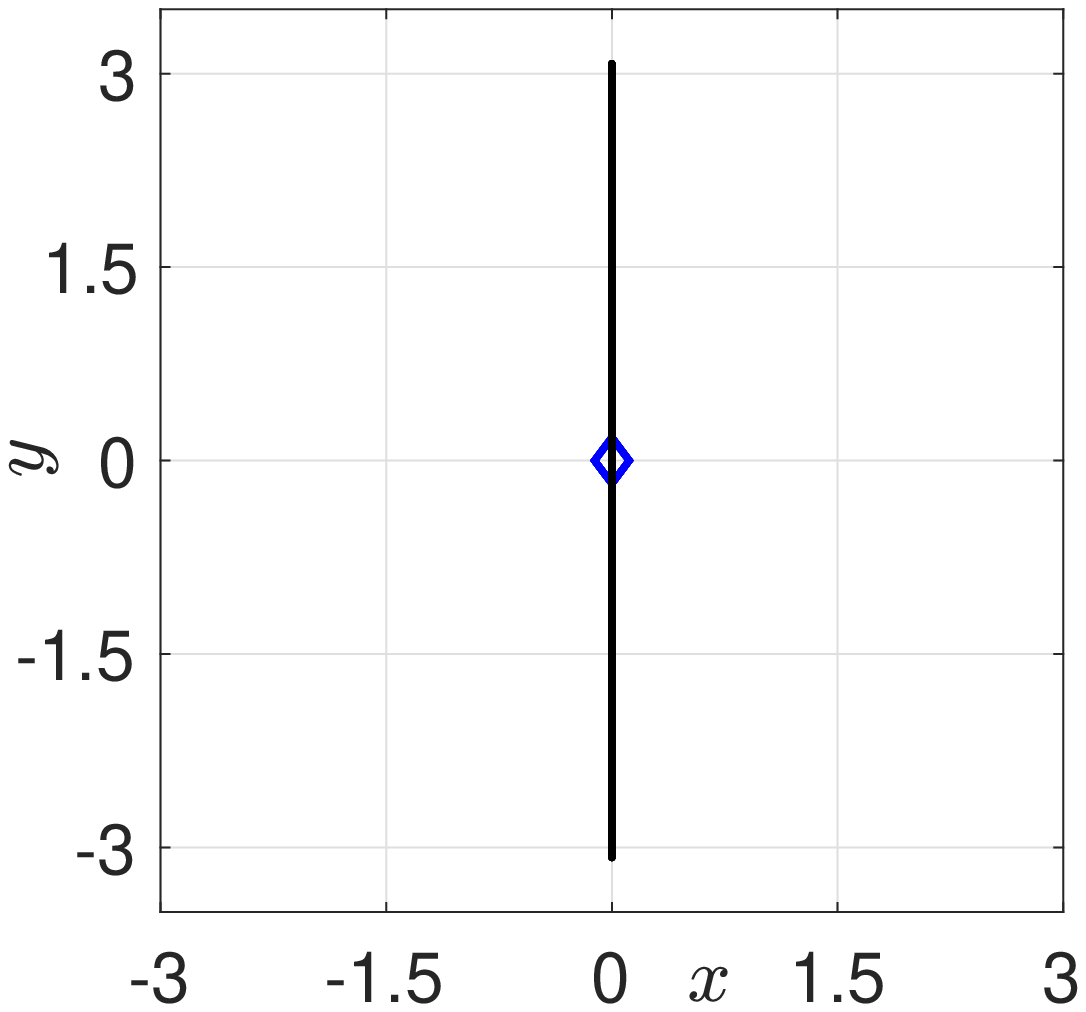}}
\subfigure{\includegraphics[width=0.45\textwidth]{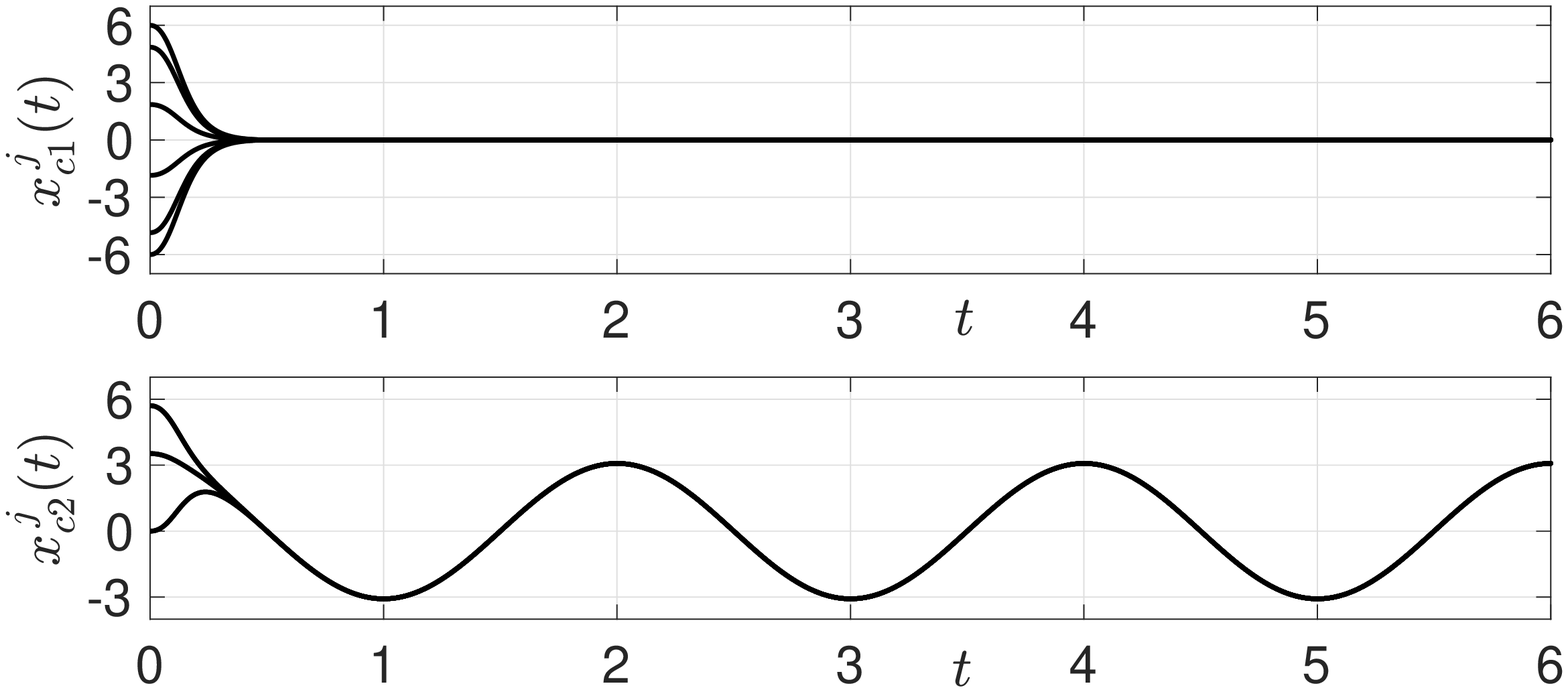}}
}
\mbox{(d)
\subfigure{\includegraphics[width=0.23\textwidth]{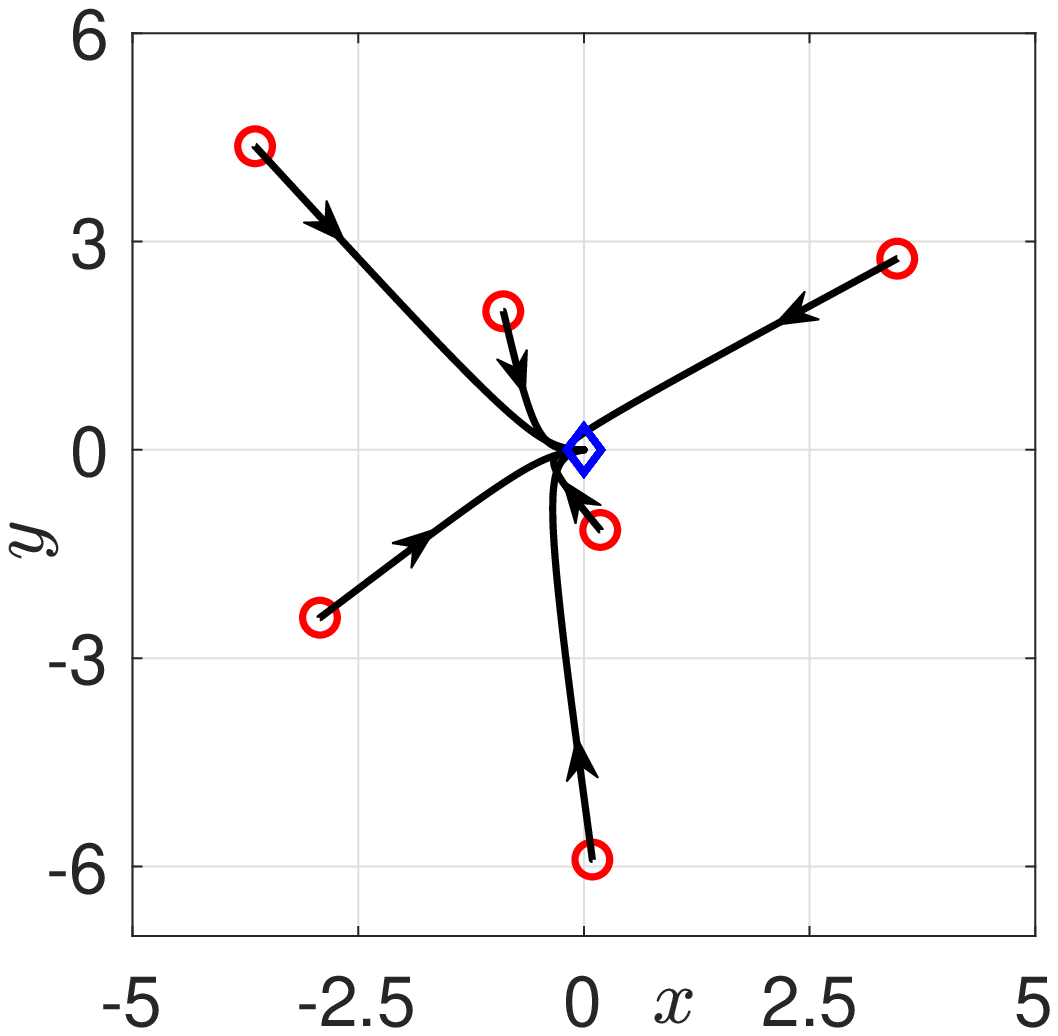}}
\subfigure{\includegraphics[width=0.23\textwidth]{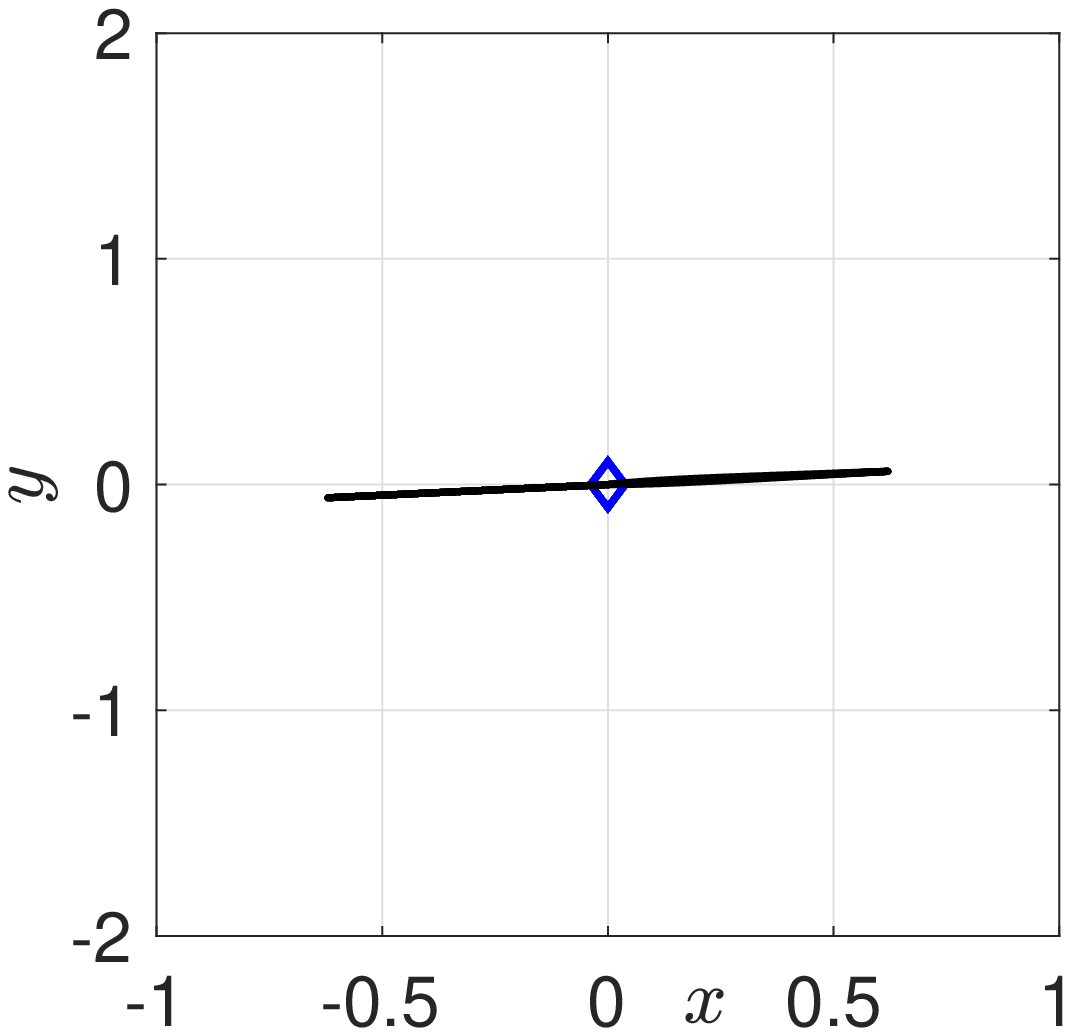}}
\subfigure{\includegraphics[width=0.45\textwidth]{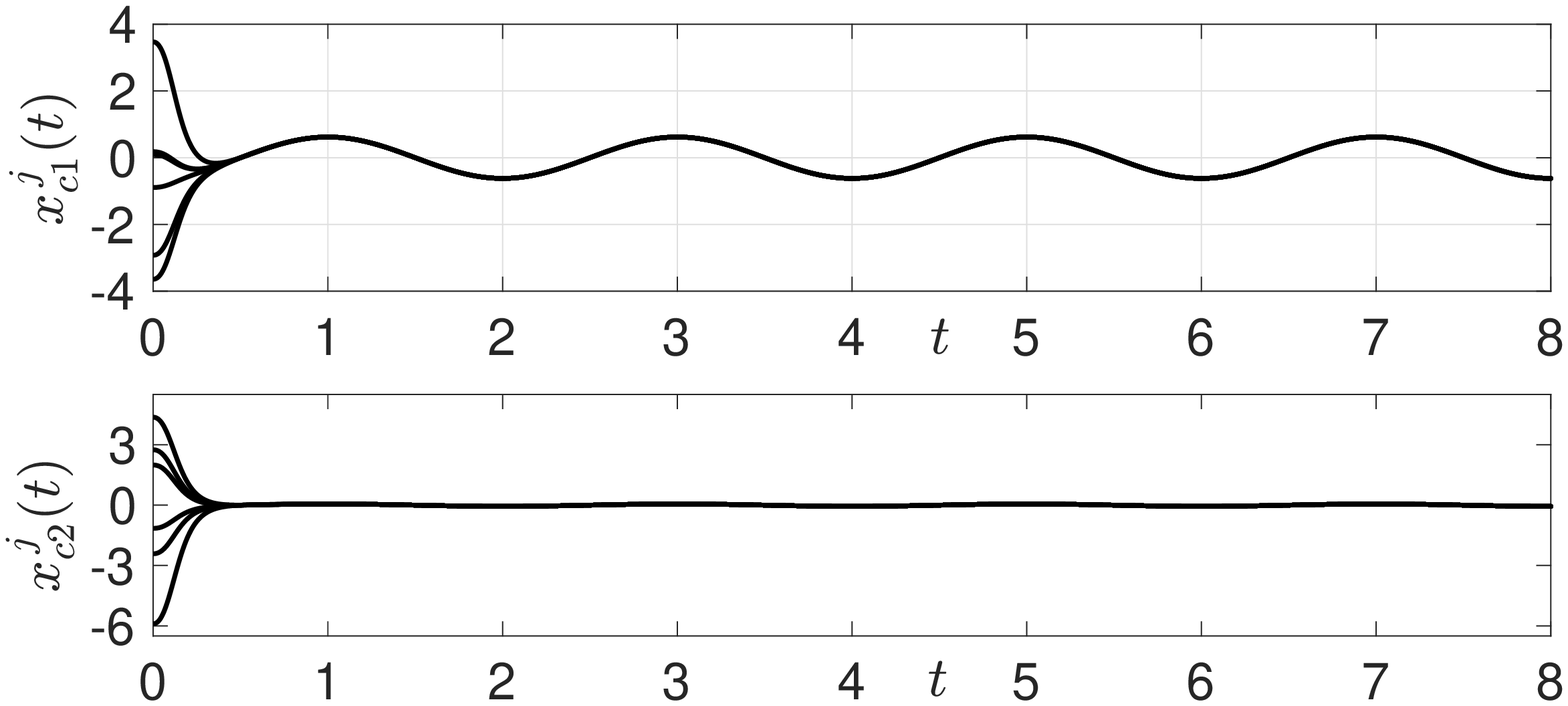}}
}
}
\vspace{-0.3cm}
\caption{First two columns:  trajectory of  center of mass $x_c^j(t)$ in $t\in[0, t_c]$ and  $t\in[t_c, 10]$ ($t_c=1.5$ for first row while 0.5 for the others).
The third column:  time evolution of $x_{c1}^j(t)$ and  $x_{c2}^j(t))$ (right).  $\circ$ denotes location of $x_c^j(0)$,
while $\Diamond$ denotes the one of $x_c^j(t_c)$.}
 \label{fig:quant_case3to6}
\end{figure}

\begin{figure}[h]
\centering{
\subfigure{\includegraphics[width=0.23\textwidth]{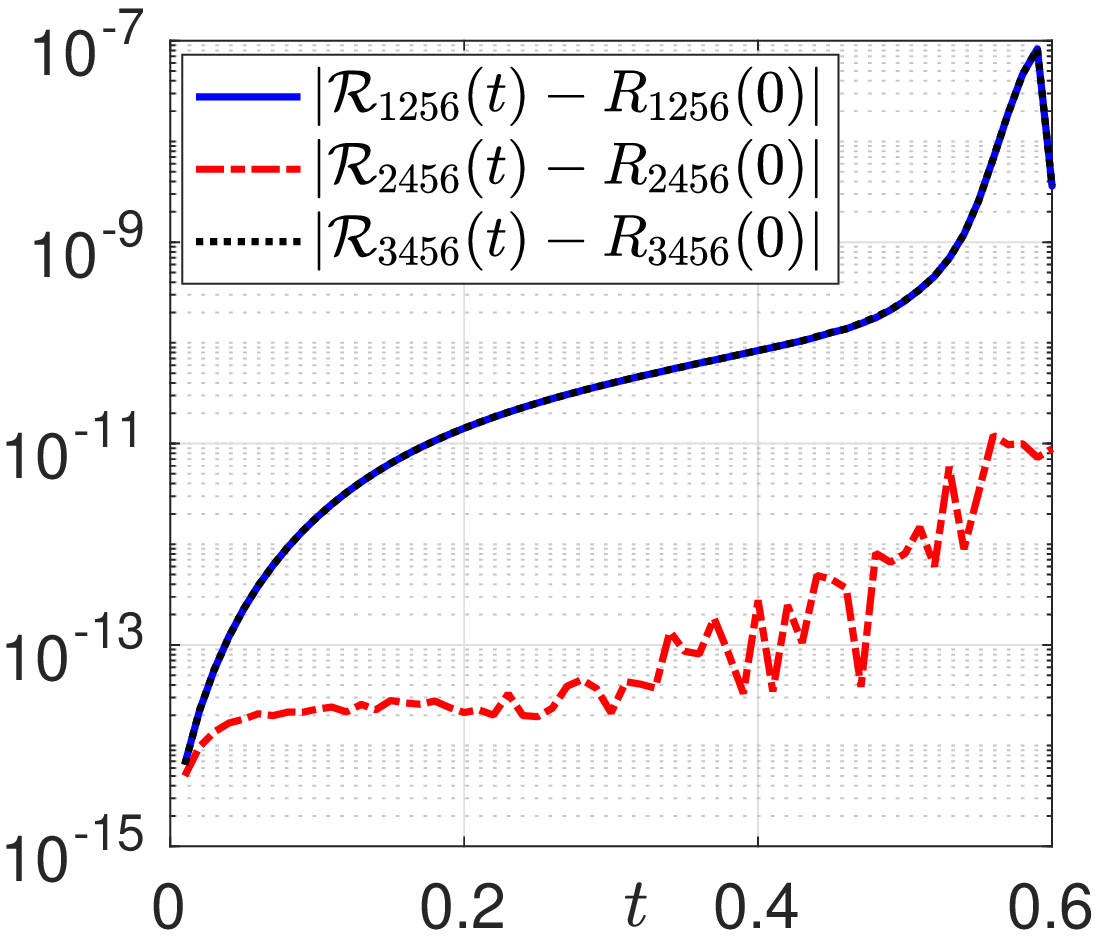}}
\subfigure{\includegraphics[width=0.23\textwidth]{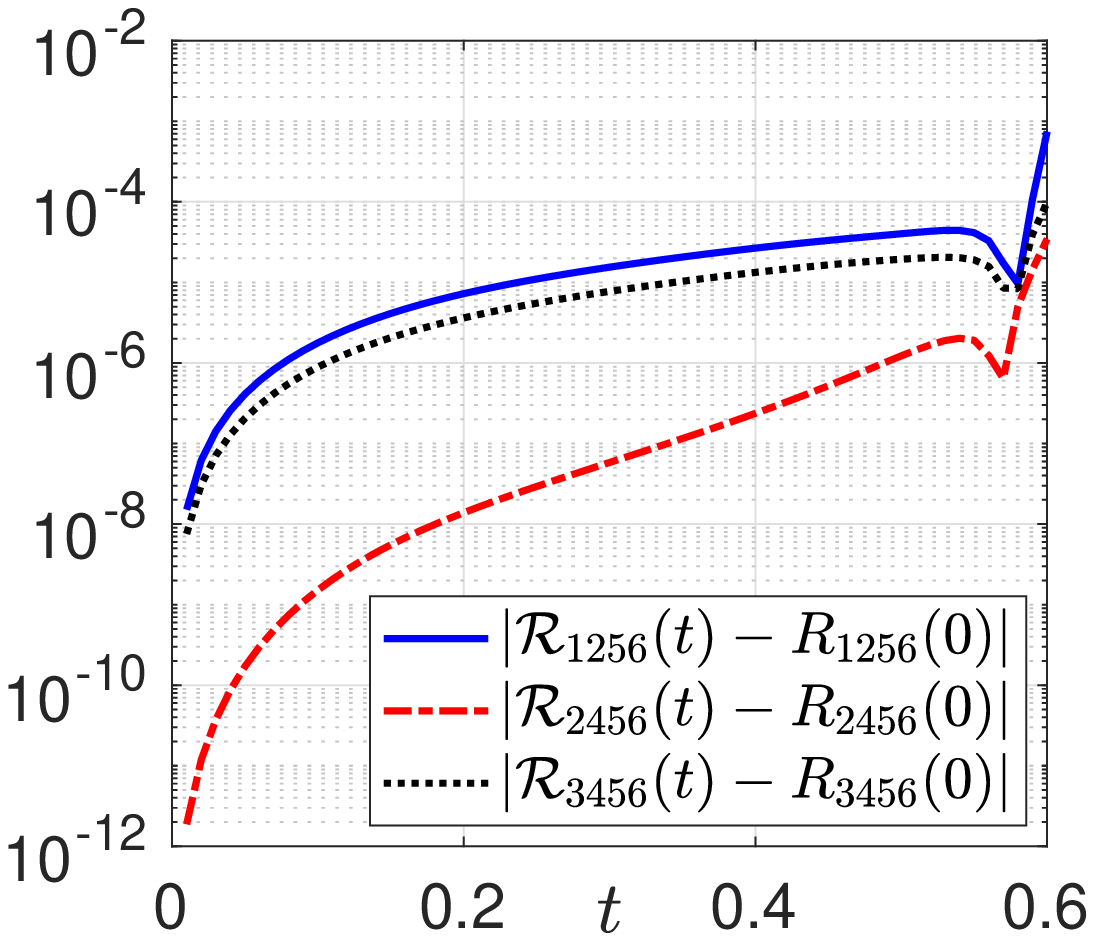}}
\subfigure{\includegraphics[width=0.23\textwidth]{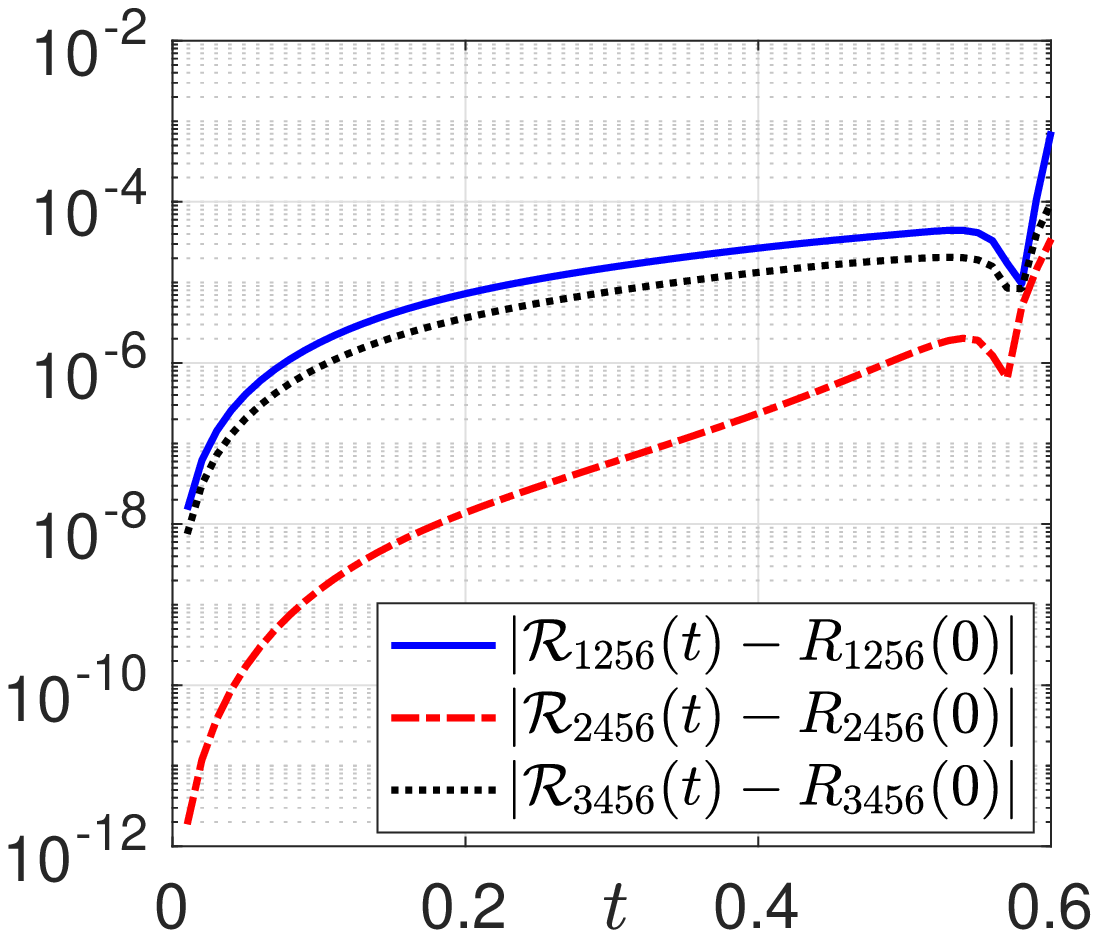}}
\subfigure{\includegraphics[width=0.23\textwidth]{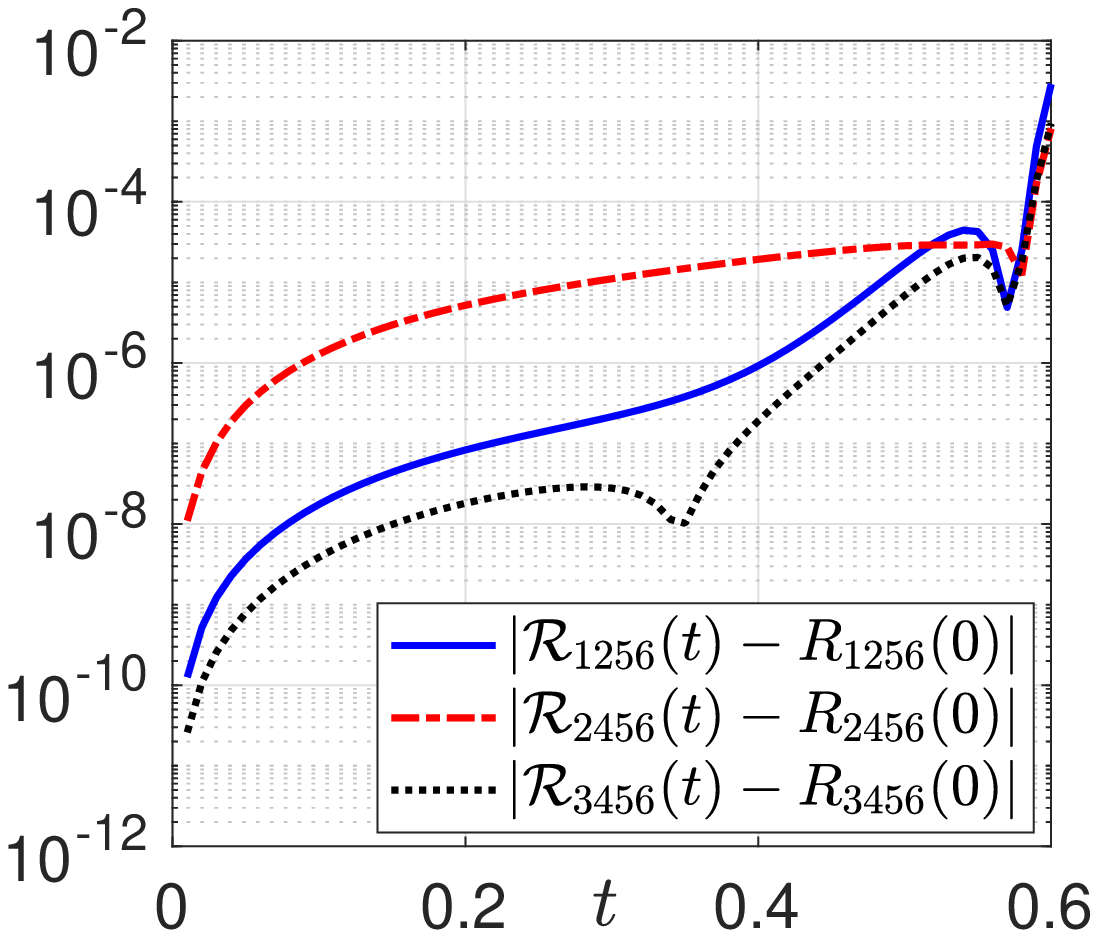}}
}
\vspace{-0.3cm}
\caption{Time evolution of $|\mathcal{R}_{1256}(t)-\mathcal{R}_{1256}(0)|$, $|\mathcal{R}_{2456}(t)-\mathcal{R}_{2456}(0)|$ and  $|\mathcal{R}_{3456}(t)-\mathcal{R}_{3456}(0)|$ for {\bf Case} 3-6 (Left to right). }
 \label{fig:dyn_R_ijkl}
\end{figure}

\begin{figure}[h]
\centering{
\mbox{(a)
\subfigure{\includegraphics[width=0.15\textwidth]{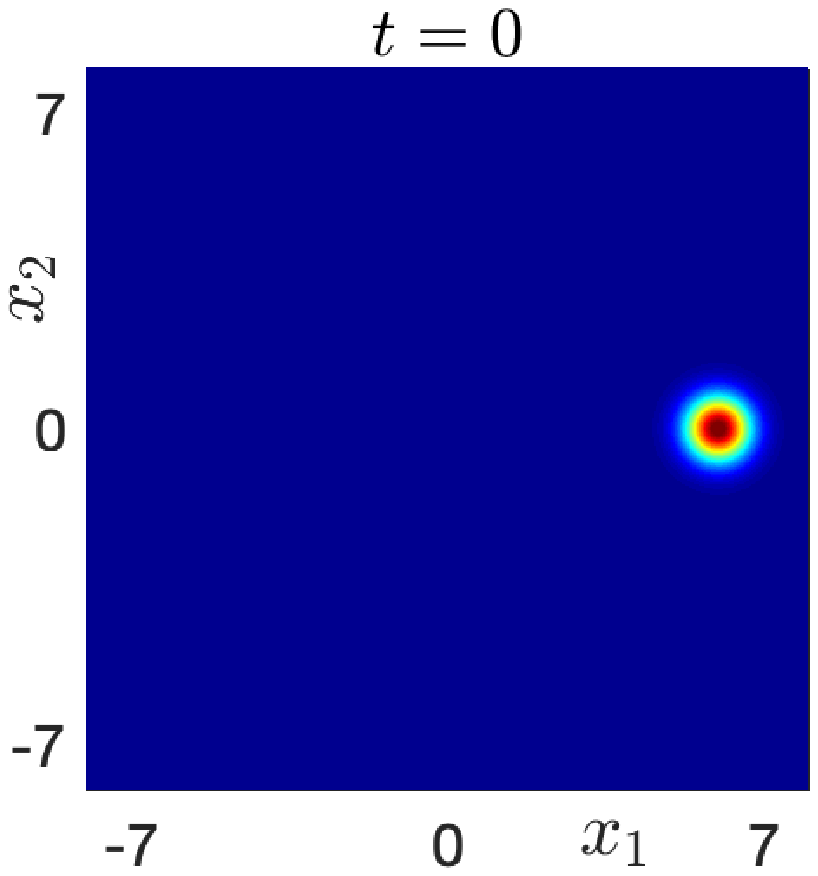}}
\subfigure{\includegraphics[width=0.15\textwidth]{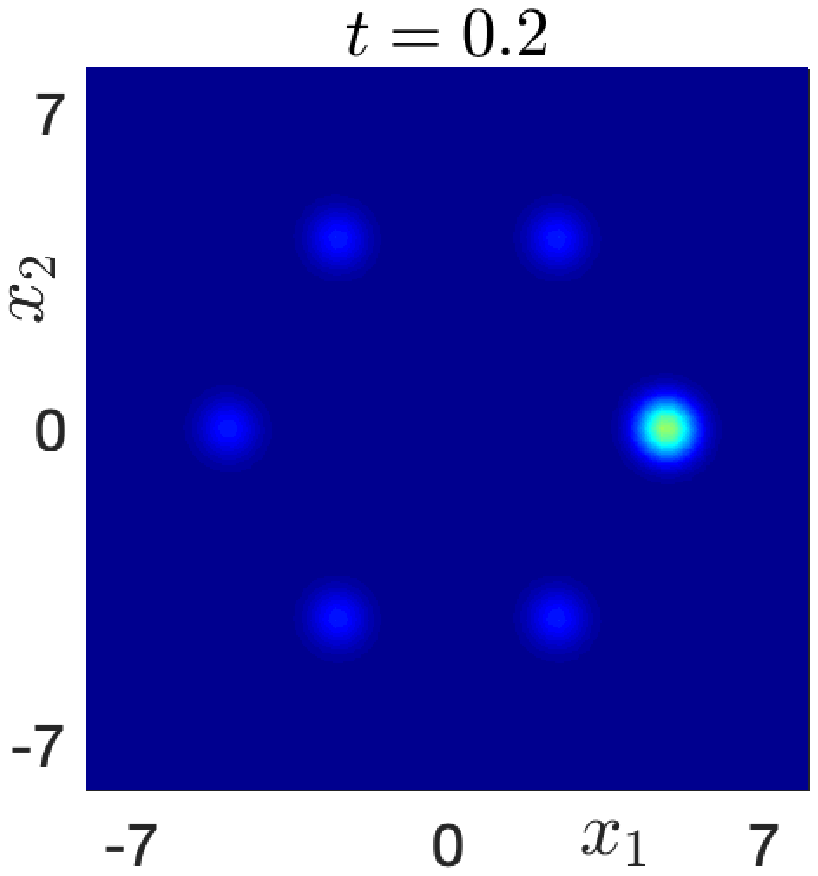}}
\subfigure{\includegraphics[width=0.15\textwidth]{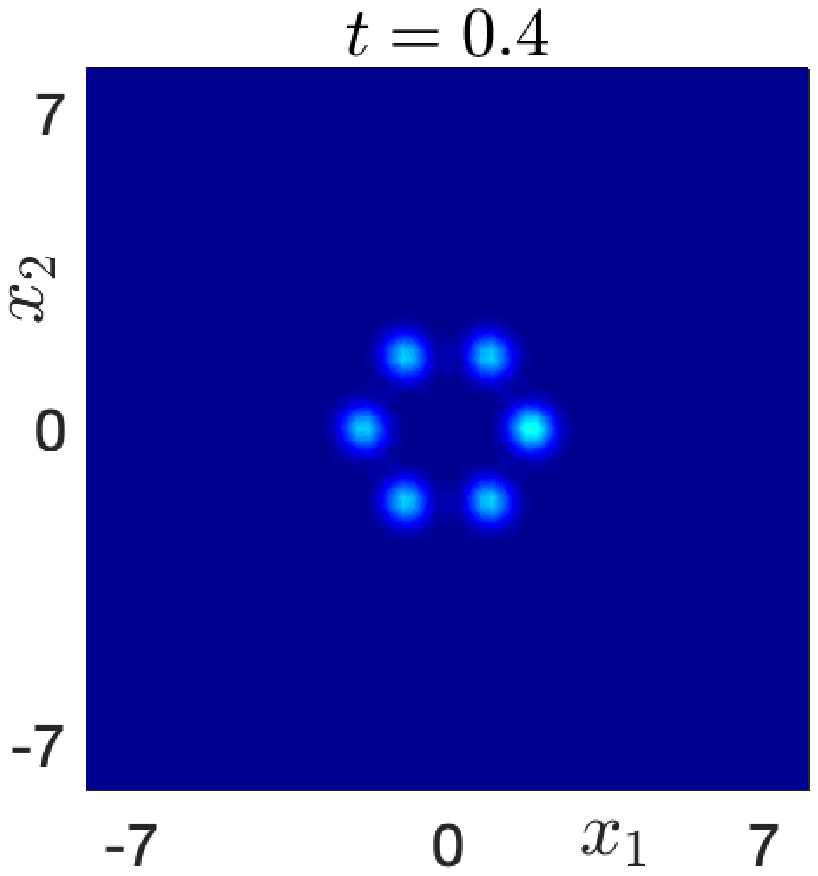}}
\subfigure{\includegraphics[width=0.15\textwidth]{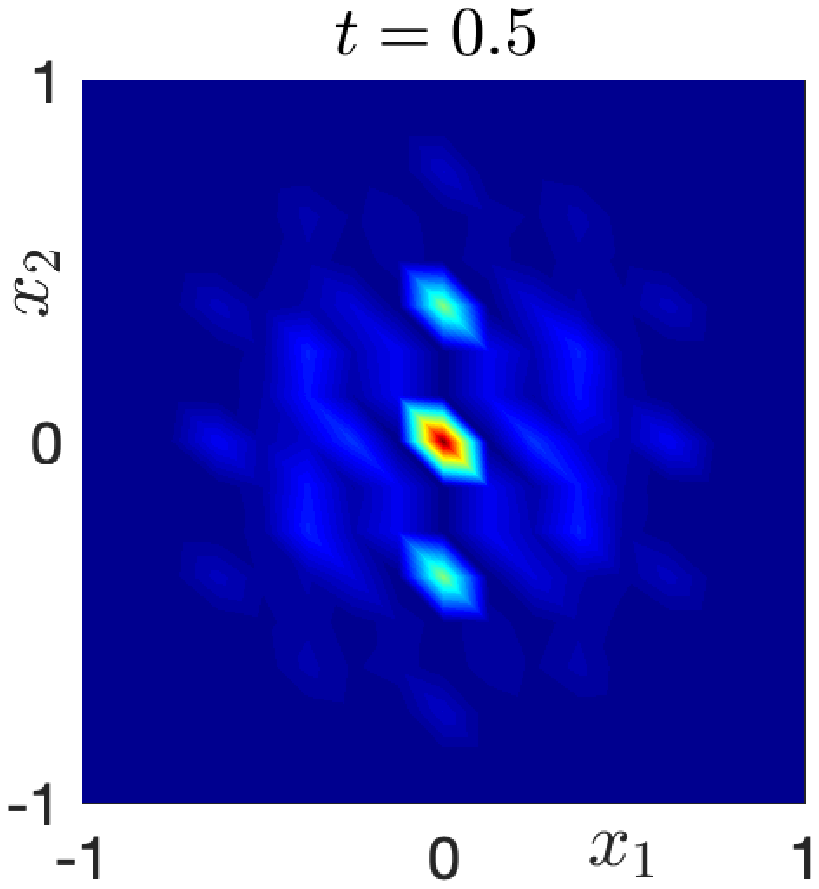}}
\subfigure{\includegraphics[width=0.15\textwidth]{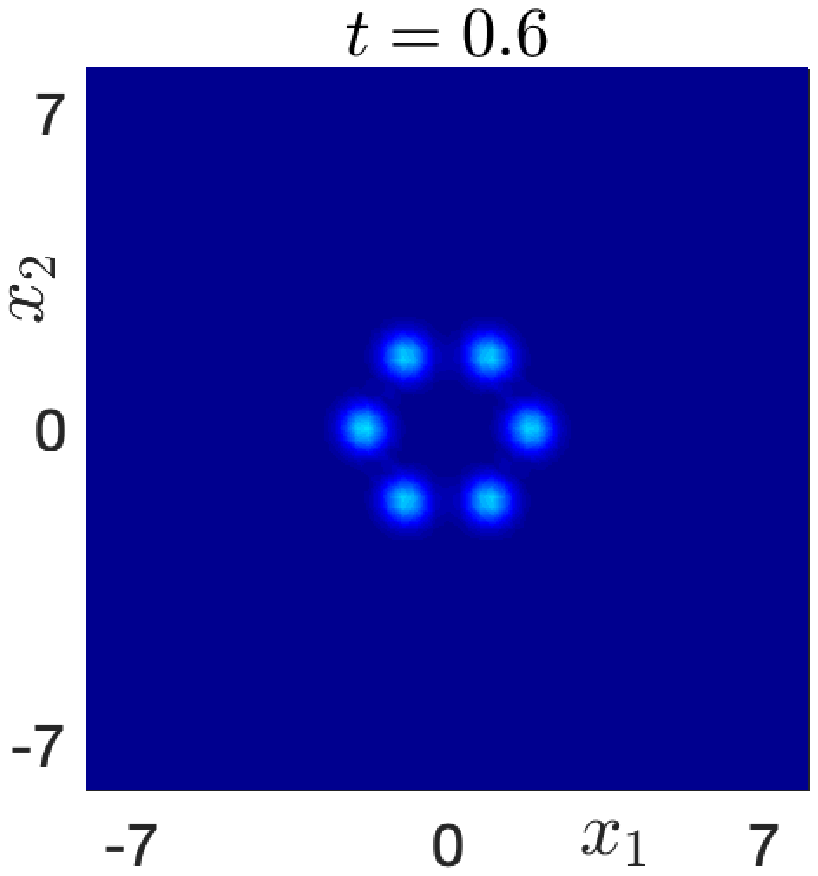}}
\subfigure{\includegraphics[width=0.15\textwidth]{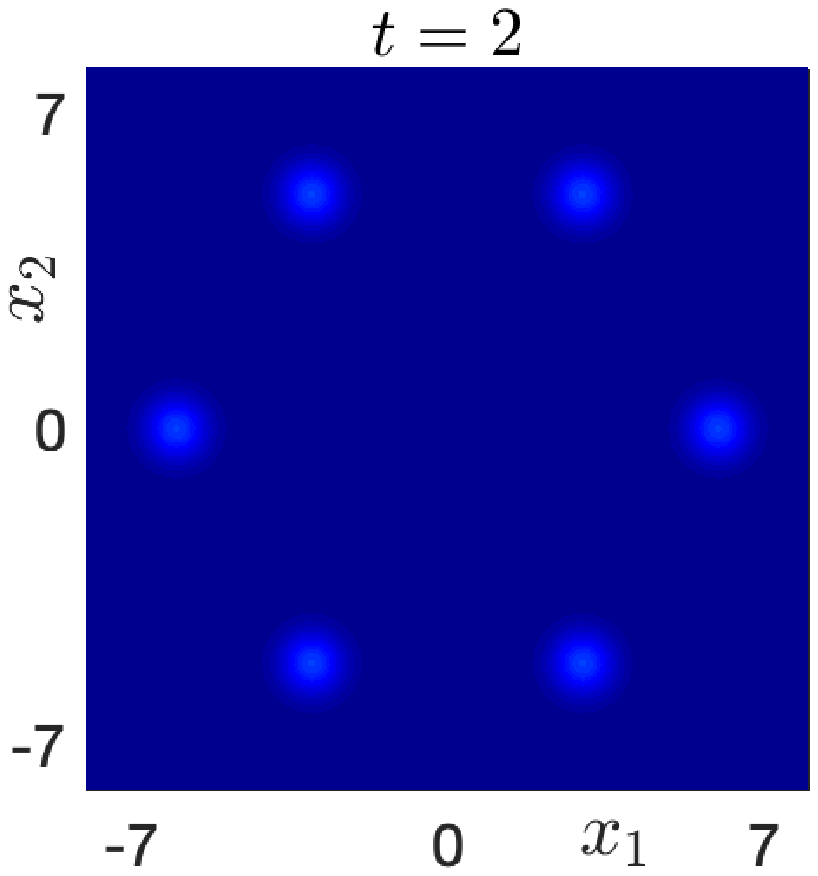}}
}
\mbox{(b)
\subfigure{\includegraphics[width=0.15\textwidth]{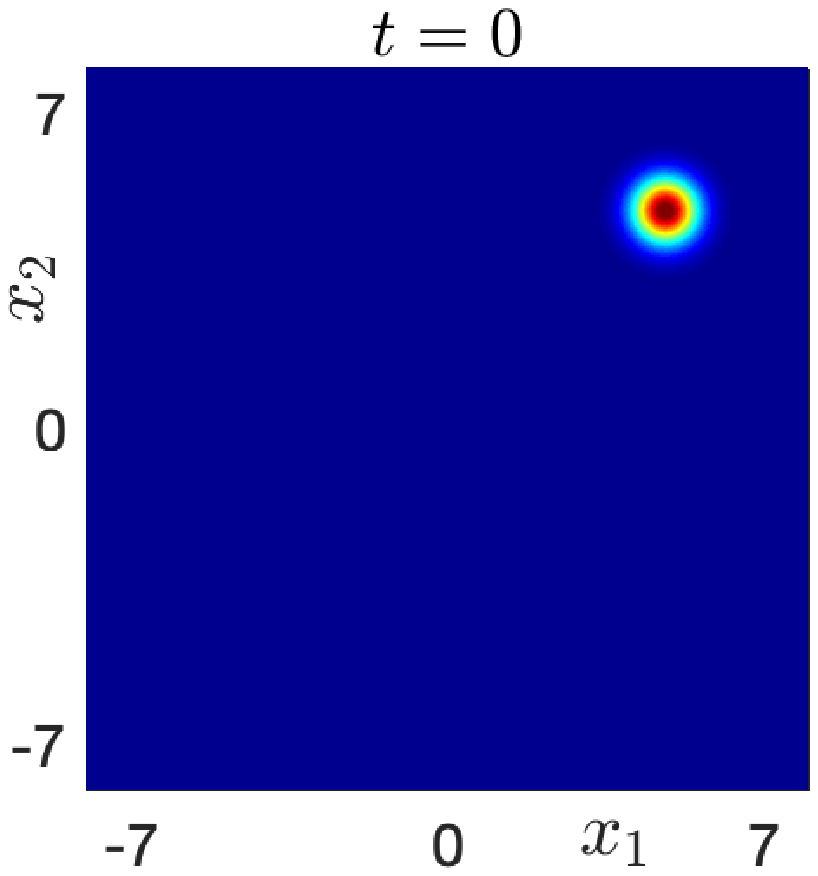}}
\subfigure{\includegraphics[width=0.15\textwidth]{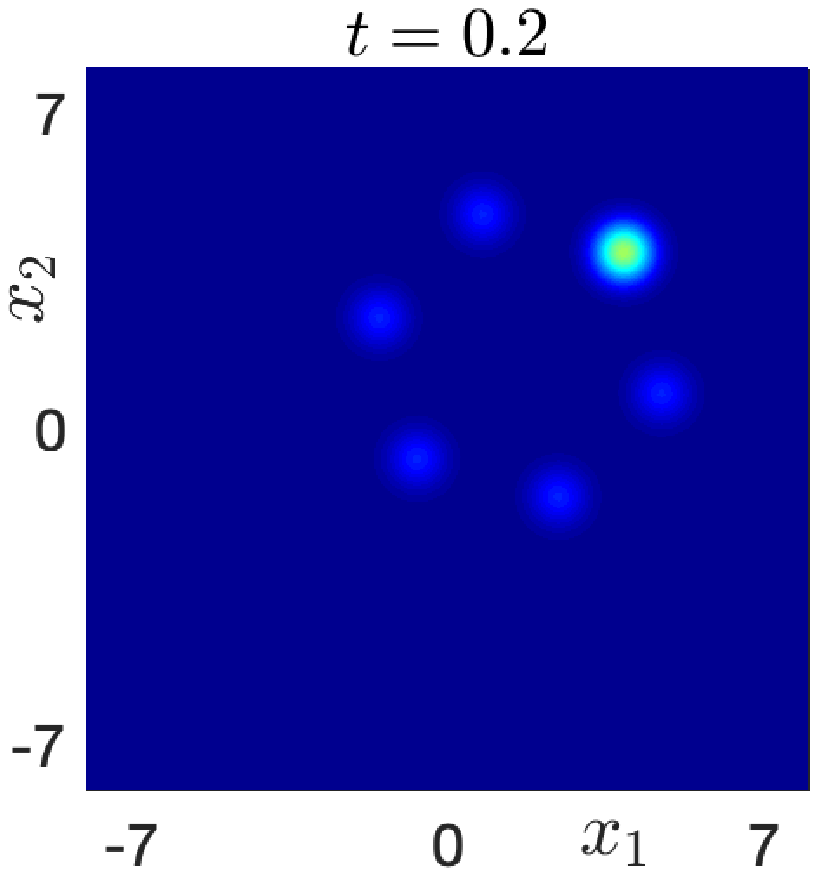}}
\subfigure{\includegraphics[width=0.15\textwidth]{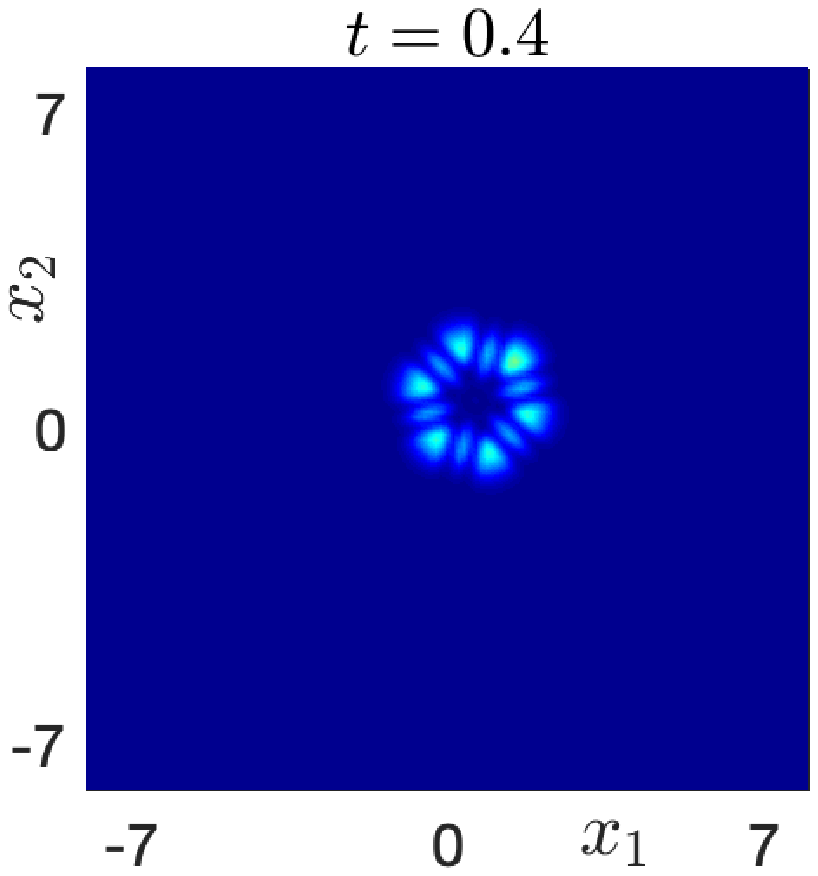}}
\subfigure{\includegraphics[width=0.15\textwidth]{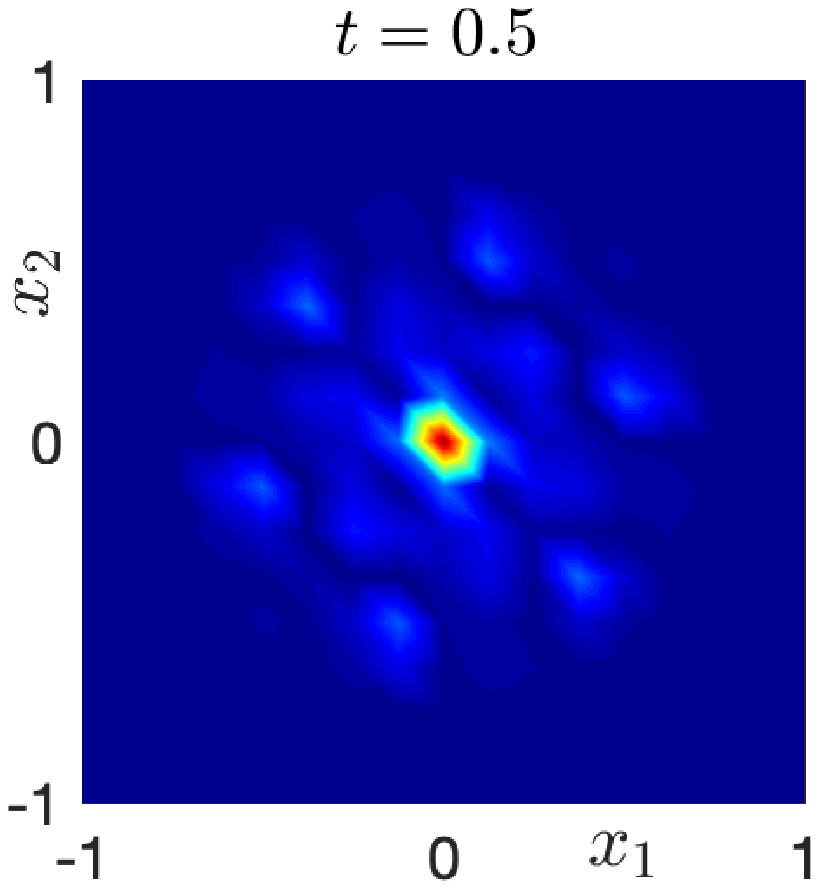}}
\subfigure{\includegraphics[width=0.15\textwidth]{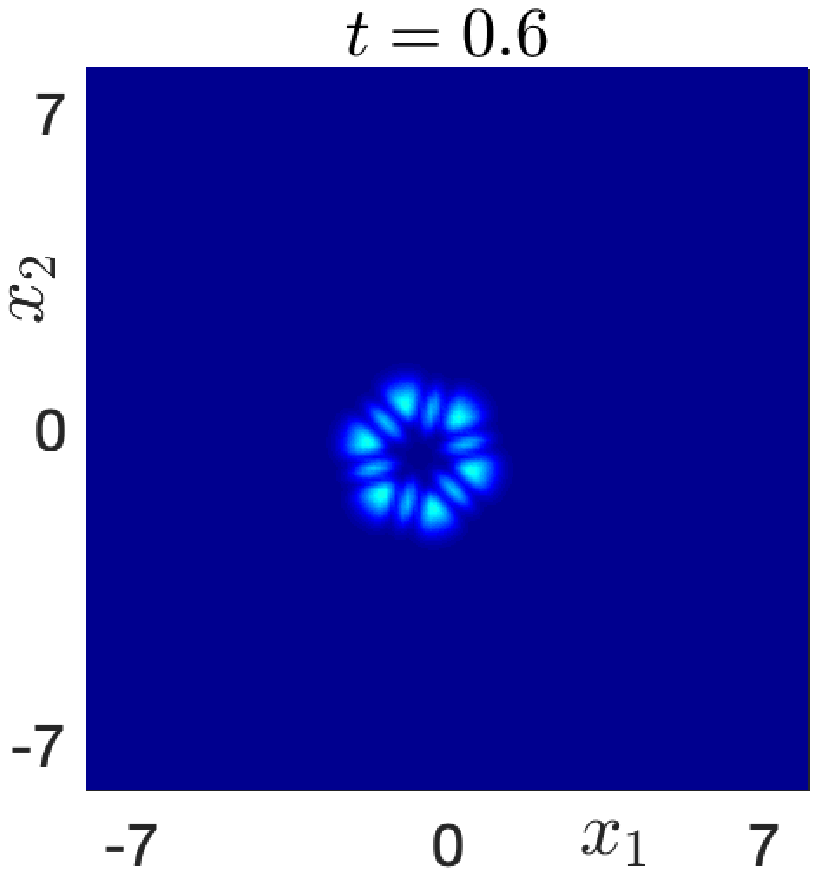}}
\subfigure{\includegraphics[width=0.15\textwidth]{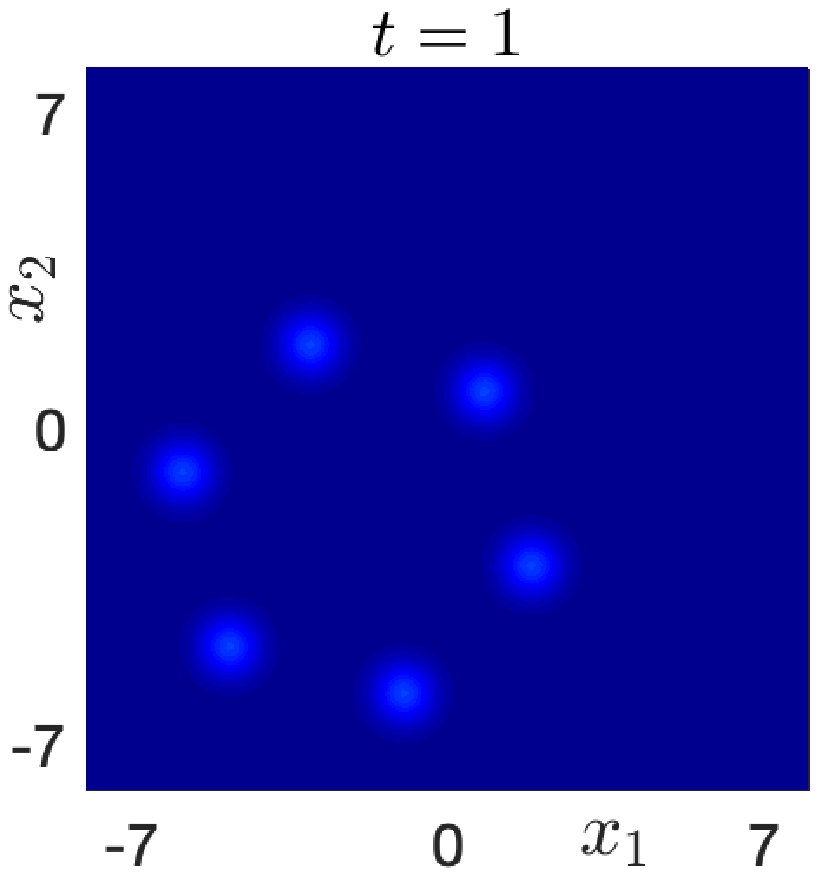}}
}
\mbox{(c)
\subfigure{\includegraphics[width=0.15\textwidth]{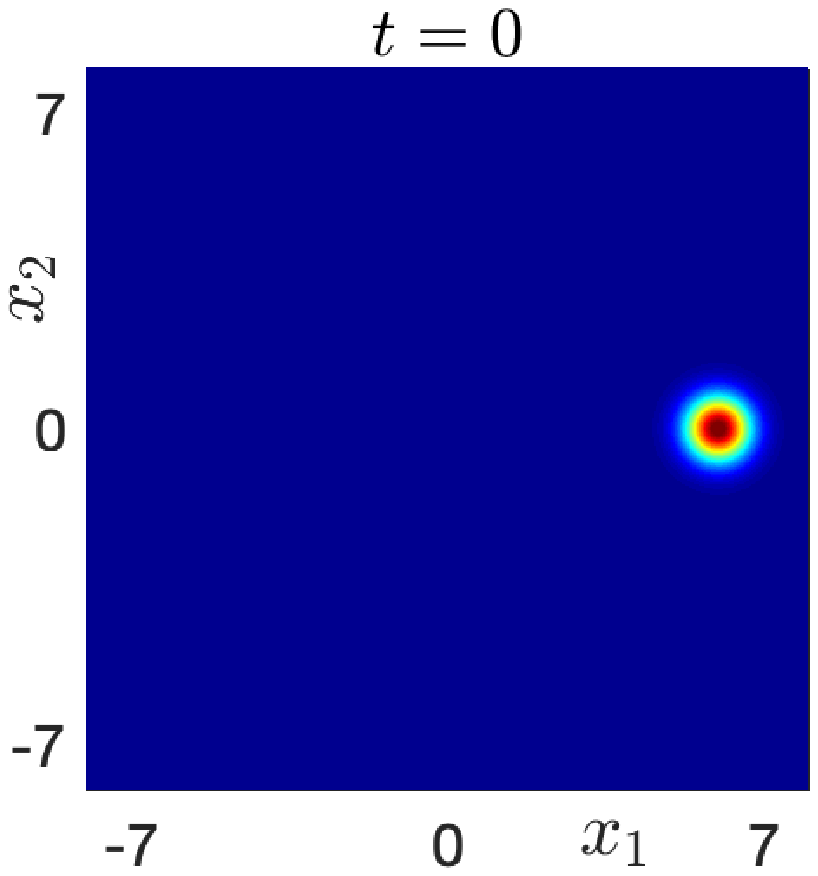}}
\subfigure{\includegraphics[width=0.15\textwidth]{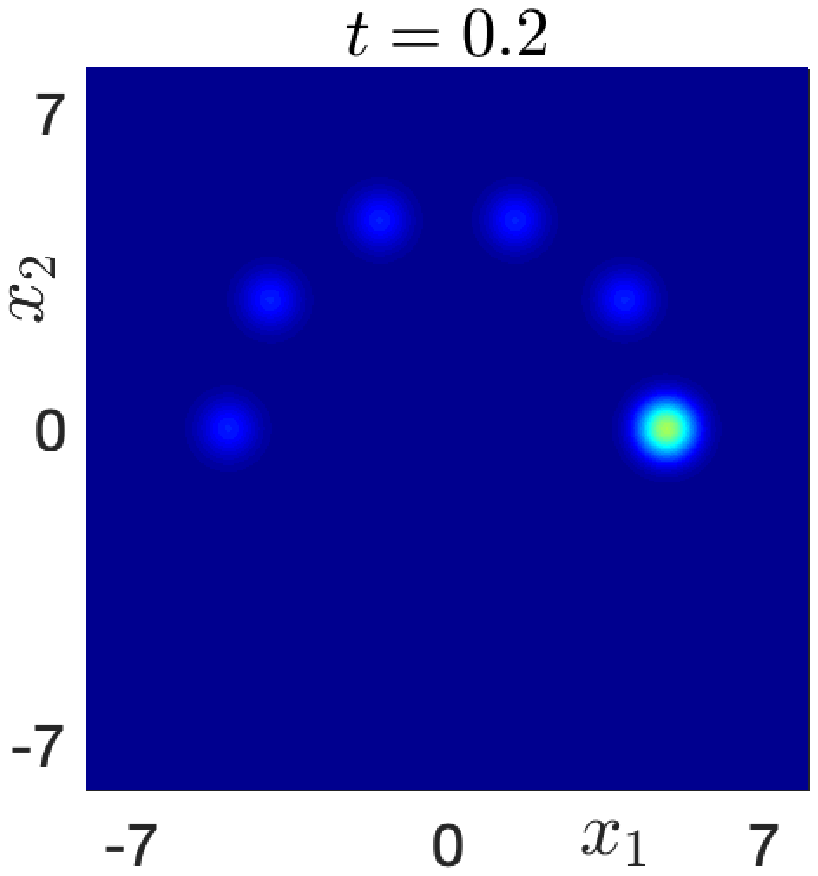}}
\subfigure{\includegraphics[width=0.15\textwidth]{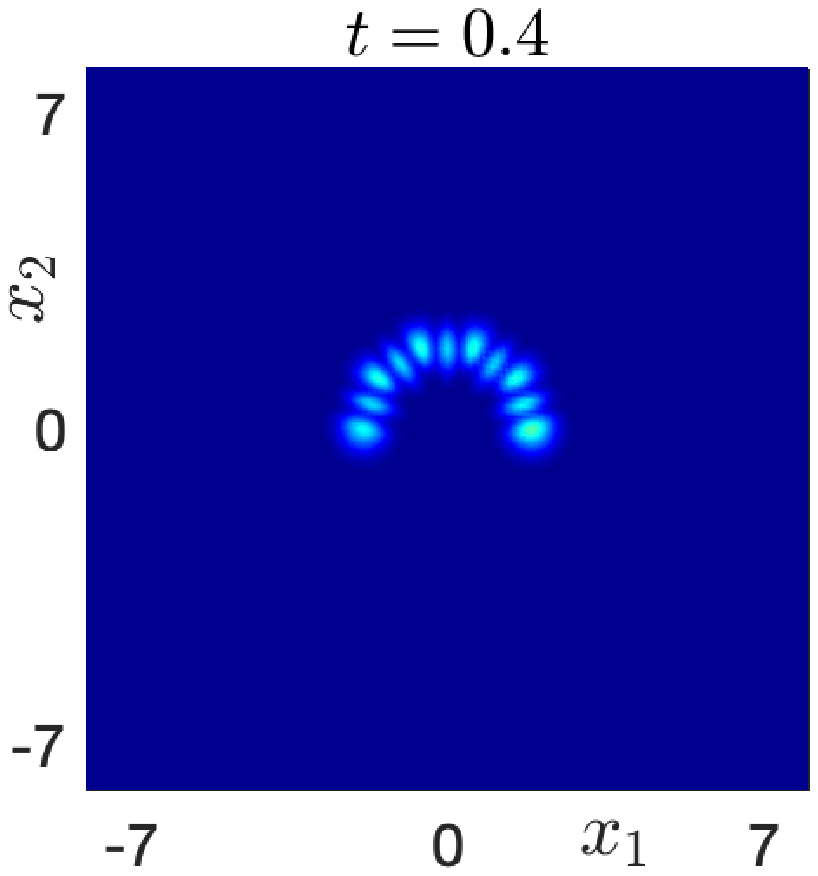}}
\subfigure{\includegraphics[width=0.15\textwidth]{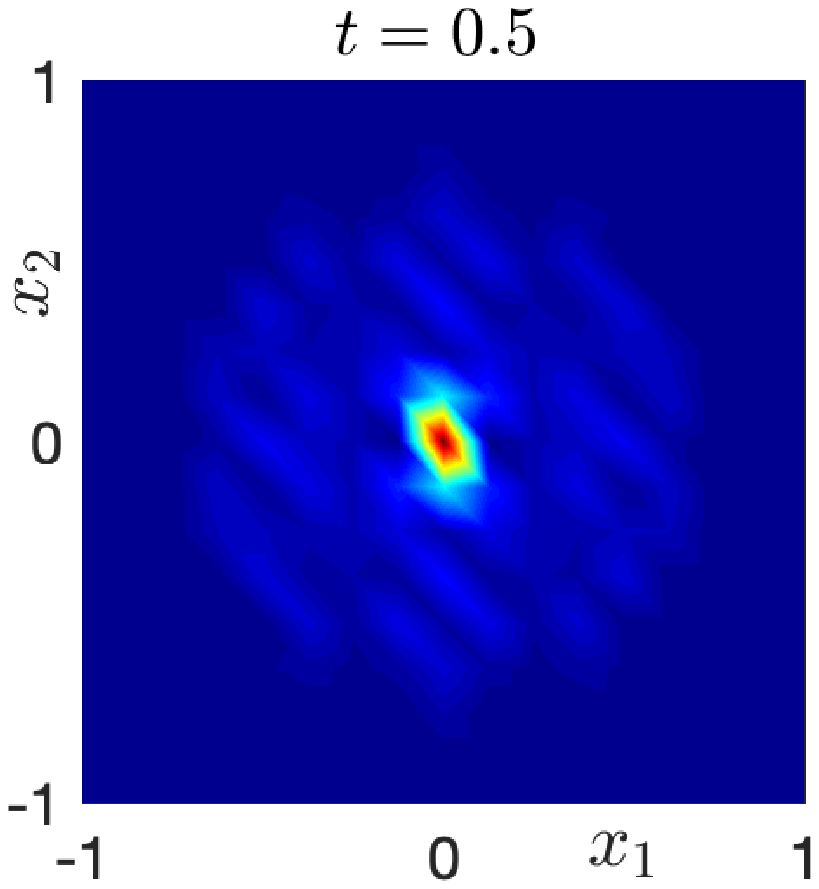}}
\subfigure{\includegraphics[width=0.15\textwidth]{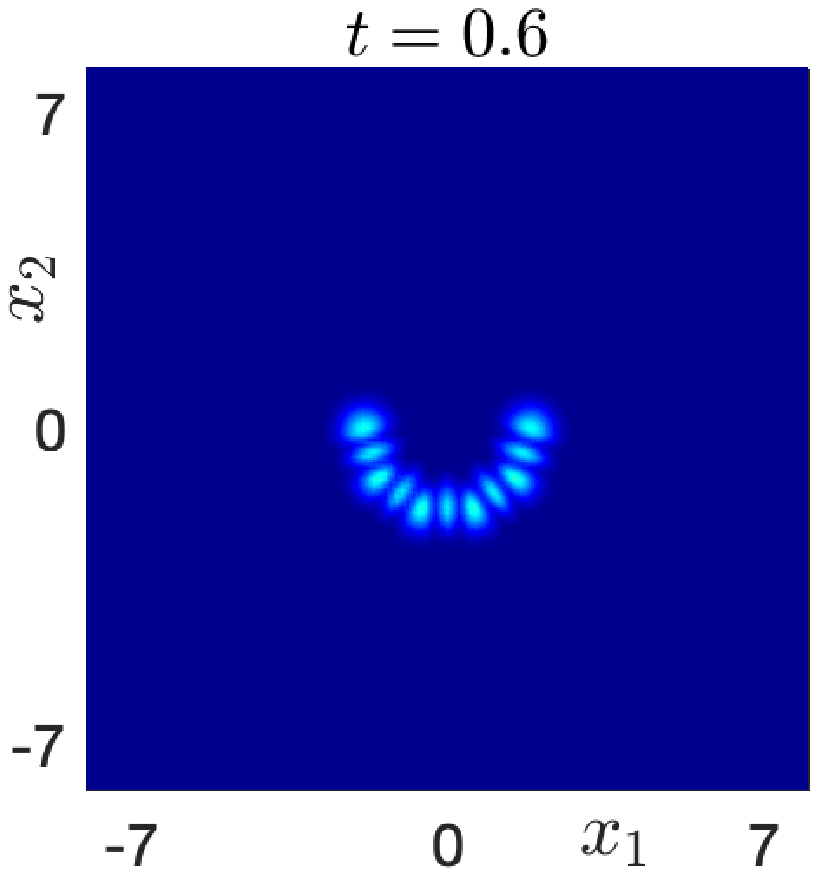}}
\subfigure{\includegraphics[width=0.15\textwidth]{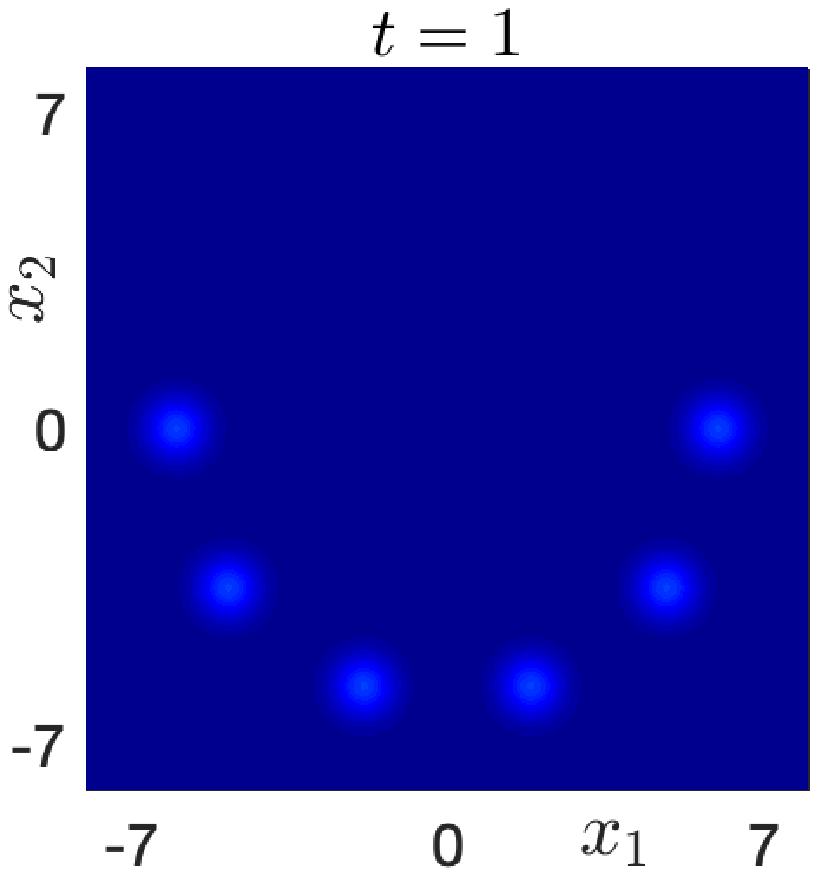}}
}
\mbox{(d)
\subfigure{\includegraphics[width=0.15\textwidth]{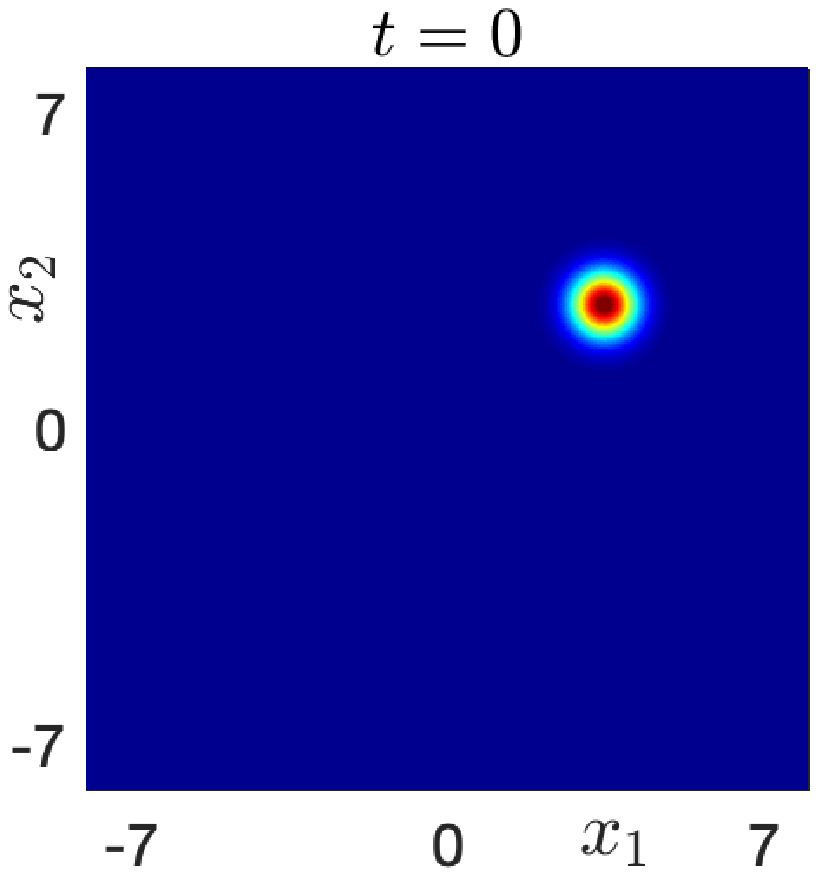}}
\subfigure{\includegraphics[width=0.15\textwidth]{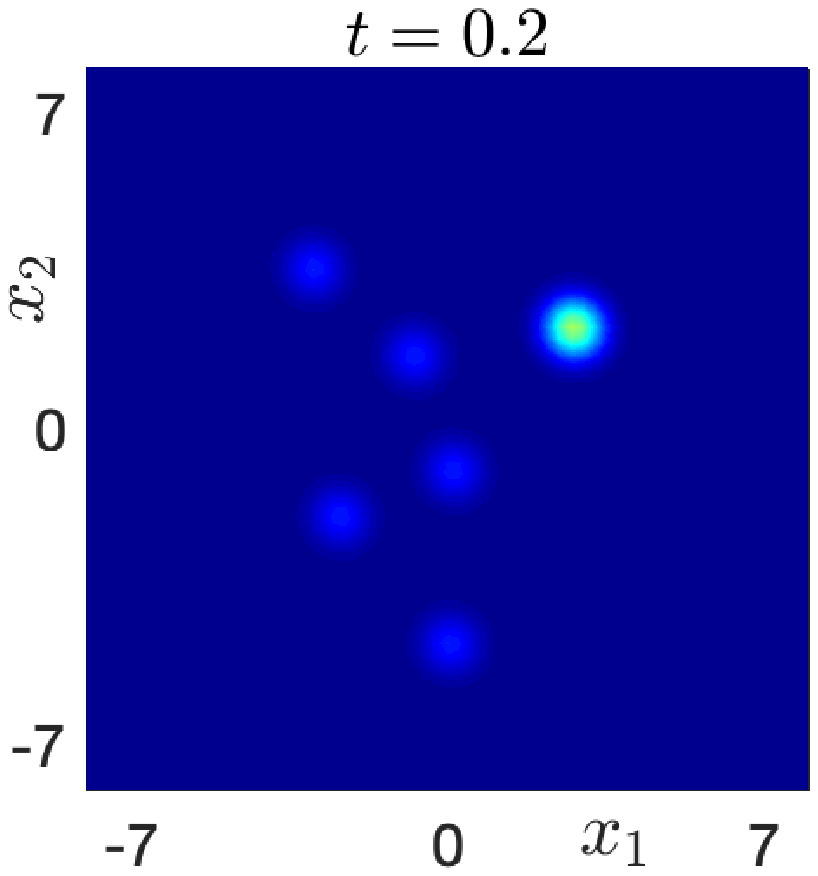}}
\subfigure{\includegraphics[width=0.15\textwidth]{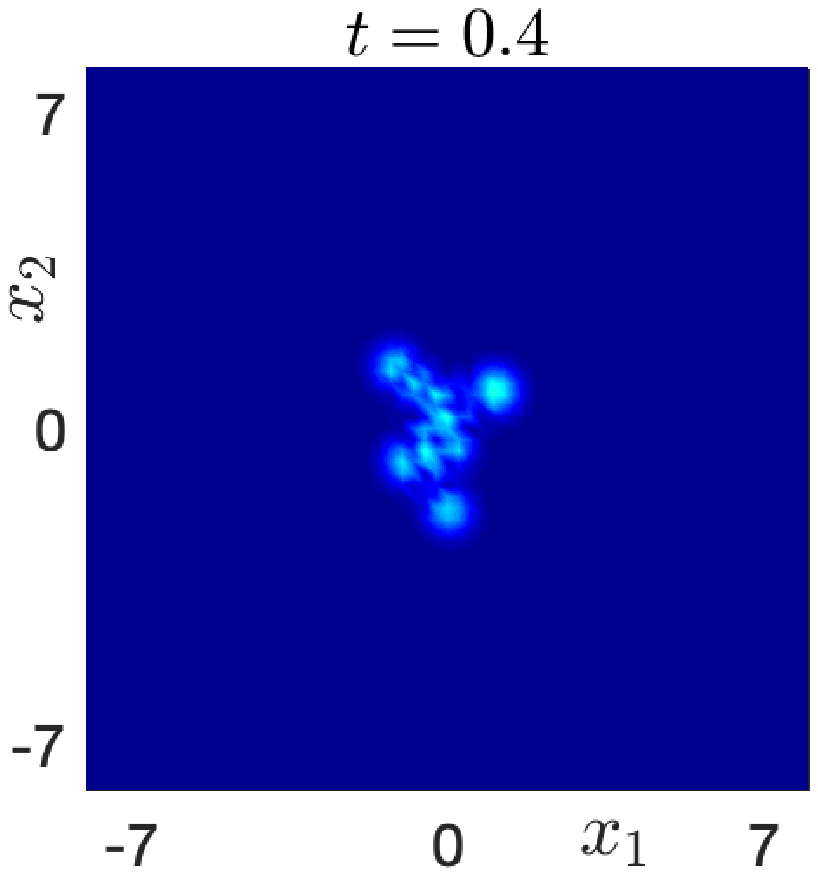}}
\subfigure{\includegraphics[width=0.15\textwidth]{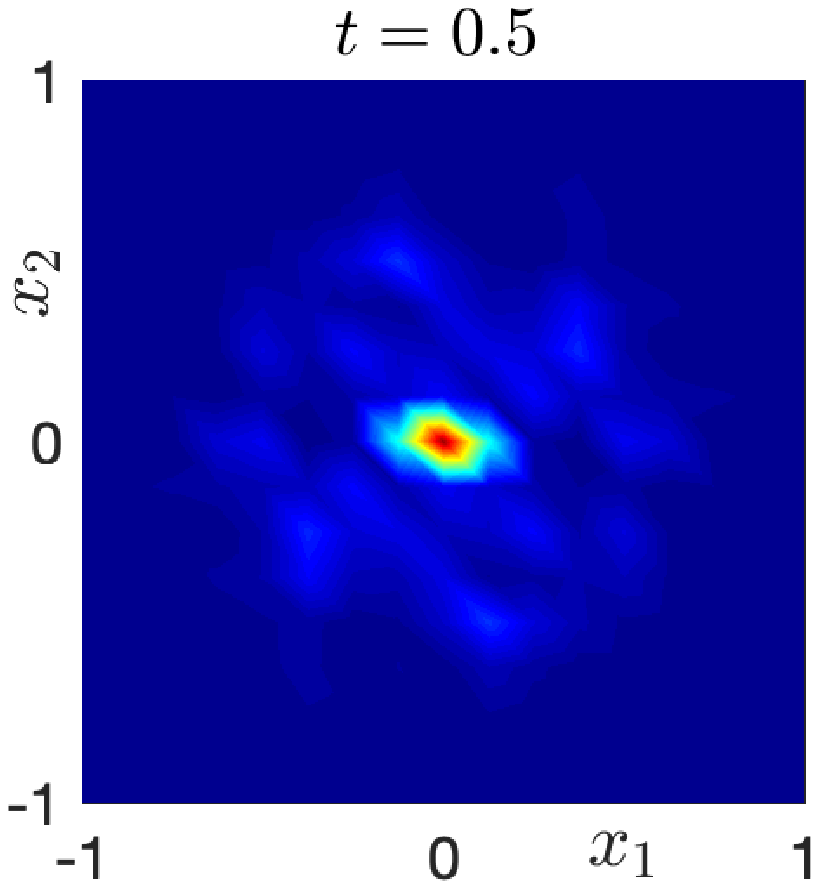}}
\subfigure{\includegraphics[width=0.15\textwidth]{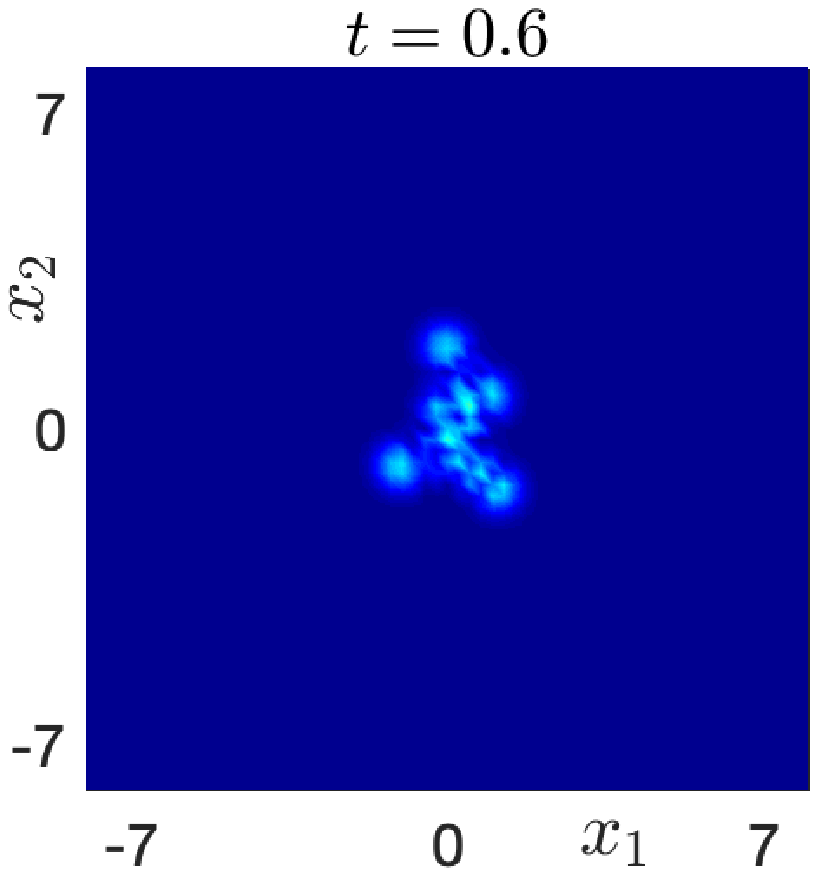}}
\subfigure{\includegraphics[width=0.15\textwidth]{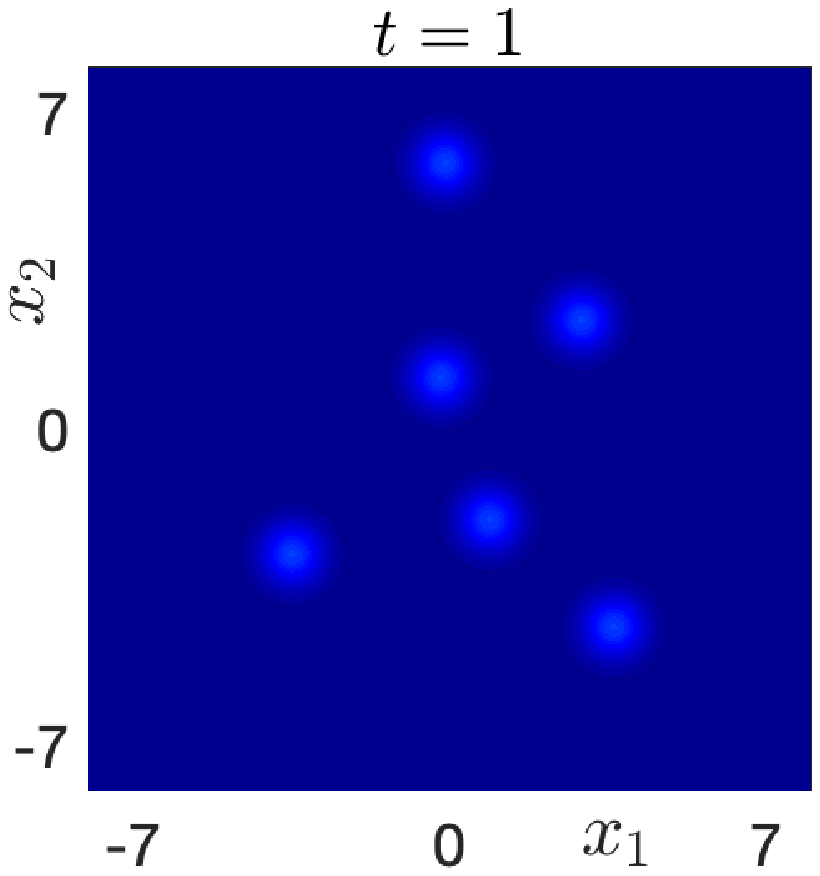}}
}
\mbox{
\subfigure{\includegraphics[width=0.5\textwidth,height=1.3cm]{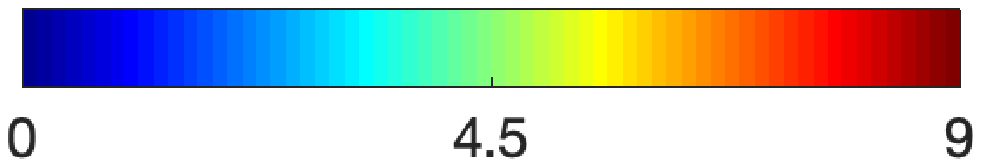}}
\subfigure{\includegraphics[width=0.5\textwidth,height=1.3cm]{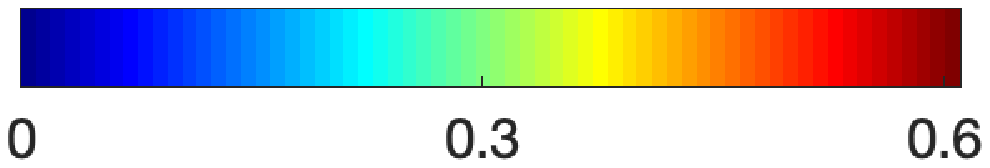}}
}}
\vspace{-0.8cm}
\caption{Contour plots of \textcolor{black}{$|\psi_1(x,t)|^2$} at different time $t$ for {\bf Cases 3-5} in Example \ref{eg:2d-case} (the top 4 rows)
and color bars of the contour plots at  $t=0.5$ (bottom left) and other time $t$ (bottom right).
 } \label{fig:density_case3to6}
\end{figure}

\section{Conclusion}  \label{sec:5} 
\setcounter{equation}{0}
In this survey paper, we have reviewed state-of-the-art results on the collective behaviors for two Lohe type aggregation models (the Lohe tensor model and the Schr\"{o}dinger-Lohe model). The former deals with the aggregation dynamics of tensors with the same rank and size, and it turns out to be a generalized model for   previously known first-order aggregation models such as the Kuramoto model, the swarm sphere model and the Lohe matrix model. Of course, the Lohe tensor model can be reduced to aggregation models on the hermitian sphere and nonsquare matrix group which are not known in previous literature. For the collective dynamics, we adopt two concepts of aggregation (complete state aggregation and practical aggregation). When all state aggregates to the same state, we call it as   complete state aggregation. This phenomenon occurs for a homogeneous ensemble in which all particles follow the same free flow. In contrast, when states are governed by different free flows, complete state aggregation cannot occur. Of course, the rate of state change (we call it velocity) can aggregate to the same value. At present, this strong estimate is not available for the aforementioned models yet. 

The latter  describes the collective aggregation of the coupled Schr\"{o}dinger equations for wave functions. For a special case, it can be reduced to the swarm sphere and Kuramoto models. For this model, we can also adopt the same universal approaches (Lyapunov functional approach and dynamical systems theory approach for two-point correlation functions). Similar to the Lohe tensor model, this model exhibits the same aggregation phenomena for homogeneous and heterogeneous ensembles. As we have already mentioned, we do not have a complete theory for dealing with a heterogeneous ensemble except for a weak aggregation estimate (practical aggregation). Thus, we would say that our reviewed results for a heterogeneous ensemble are still far from  completeness. This will be an interesting research direction for those who are interested in collective dynamics.

\section*{Acknowledgments}
The content of this paper is based on a lecture given by the first author at the Institute of Mathematical Sciences  of
National University of Singapore in December 2019. The authors acknowledge the Institute of Mathematical Sciences for generous support and especially thank Prof. Weizhu Bao for the invitation and his warm hospitality. The work of S.-Y. Ha is supported by the NRF grant (2020R1A2C3A01003881) and the work of   D. Kim was supported by the National Research Foundation of Korea (NRF) grant funded by the Korea government (MSIT) (No.2021R1F1A1055929).

\end{document}